\newtheorem{theorem} {Theorem}
\newtheorem{lemma} [theorem] {Lemma}
\newtheorem{corollary} [theorem] {Corollary}
\newtheorem{observation} [theorem] {Observation}
\newtheorem{conjecture} {Conjecture}
\newtheorem{problem}[theorem]{Problem}
\DeclareMathOperator{\Conv}{Conv}
\DeclareMathOperator{\CH}{CH}
\begin{document}


\title{Empty Monochromatic Simplices}

\author{Oswin Aichholzer\thanks{Institute for Software Technology,
    University of Technology, Graz, Austria, {\tt oaich@ist.tugraz.at}} \and
  Ruy Fabila-Monroy\thanks{Departamento de Matem{\'a}ticas, 
    Cinvestav, D.F. M\'exico, M\'exico, {\tt ruyfabila@math.cinsvestav.edu.mx}} \and
  Thomas Hackl\thanks{Institute for Software Technology,
    University of Technology, Graz, Austria, {\tt
      thackl@ist.tugraz.at}, Inffeldgasse 16b/II, 8010
    Graz, Austria; Tel: +43 316 873 5702; FAX: +43 316 873 5706} \and
  Clemens Huemer\thanks{Departament de Matem{\`a}tica Aplicada IV, 
    Universitat Polit{\`e}cnica de Catalunya, Barcelona, Spain, {\tt clemens.huemer@upc.edu}} \and
  Jorge Urrutia\thanks{Instituto de Matem{\'a}ticas,
    Universidad Nacional Aut{\'o}noma de M{\'e}xico,
    D.F. M\'exico, M\'exico, {\tt urrutia@matem.unam.mx}}}

\date{\today}
\maketitle

\begin{abstract}
  Let $S$ be a $k$-colored (finite) set of $n$ points in
  $\mathbb{R}^d$, $d\geq 3$, in general position, that is, no
  \mbox{$(d\!+\!1)$} points of $S$ lie in a common
  \mbox{$(d\!-\!1)$}-dimensional hyperplane. We count the number of
  empty monochromatic $d$-simplices determined by $S$, that is,
  simplices which have only points from one color class of $S$ as
  vertices and no points of $S$ in their interior. For $3 \leq k \leq
  d$ we provide a lower bound of $\Omega(n^{d-k+1+2^{-d}})$ and
  strengthen this to $\Omega(n^{d-2/3})$ for $k=2$.

  On the way we provide various results on triangulations of point
  sets in~$\mathbb{R}^d$. In particular, for any constant dimension
  $d\geq3$, we prove that every set of $n$ points ($n$ sufficiently
  large), in general position in $\mathbb{R}^d$, admits a
  triangulation with at least $dn+\Omega(\log n)$ simplices.
\end{abstract}

\section{Introduction}\label{sec:intro}

Let $S$ be a finite set of $n$ points in $\mathbb{R}^d$. Throughout
this paper we assume that $S$ is in general position, that is, no
\mbox{$(d\!+\!1)$} points of $S$ lie in a common
\mbox{$(d\!-\!1)$}-dimensional hyperplane. A more formal definition of
``general position'' can be found in Section~\ref{sec:simplcompl}. A
subset $S'$ of $S$ is said to be empty if $\Conv(S') \cap S=S'$, where
$\Conv(S')$ denotes the convex hull of $S'$ (please see
Section~\ref{sec:simplcompl} for a detailed definition). A
\emph{$k$-coloring} of $S$ is a partition of $S$ into $k$ non-empty
sets called \emph{color classes}. A subset of $S$ is said to be
\emph{monochromatic} if all its elements belong to the same color
class. A $d$-simplex is the $d$-dimensional version of a triangle.

The problem of determining the minimum number of empty triangles any
set of $n$ points in general position in the plane contains, has been
widely studied~\cite{katmeir,smallnumber,fewemptydumi,minnumvaltr} and
also the higher dimensional version of the problem has been
considered~\cite{emptysimplices}. In~\cite{katmeir} it is noted that
every set of $n$ points in general position in $\mathbb{R}^d$
determines at least $\binom{n-1}{d}=\Omega(n^d)$ empty simplices. In
\cite{emptysimplices} it is shown that in a random set of $n$  points in 
$\mathbb{R}^d$---chosen uniformly at random on a convex, bounded set with nonempty
interior---the expected number of empty simplices is at most $c_d
\binom{n}{d}=O(n^d)$ (where $c_d$ is a constant depending only on $d$).

The colored version of the problem has been introduced
in~\cite{chromaticvariants} and was studied in
\cite{triangmonojournal}, where $\Omega(n^{5/4})$ empty monochromatic
triangles were shown to exist in every two colored set of $n$ points
in general position in the plane.  This has later been improved to
$\Omega(n^{4/3})$ in~\cite{pachmono}. Further, arbitrarily large
$3$-colored sets without empty monochromatic triangles were shown to
exist in the plane in~\cite{chromaticvariants}.

In this paper we study the higher dimensional version of this colored
variant. We generalize both, the dimension and the number of colors.
Specifically, we consider the problem of counting the number of empty
monochromatic $d$-simplices in a $k$-colored set of points in
$\mathbb{R}^d$.

It is shown in \cite{jorge3d} that every sufficiently large
$4$-colored set of points in general position in $\mathbb{R}^3$
contains an empty monochromatic tetrahedron. This is done by showing
that any set of $n$ points in general position in $\mathbb{R}^3$ can
be triangulated with more than $3n$ tetrahedra.

The problem of triangulating a set of points with many simplices is
intimately related to the problem of determining the minimum number of
empty simplices in $k$-colored sets of points in
$\mathbb{R}^d$. Remarkably this problem has received little attention.
For the special case of $\mathbb{R}^3$, it even has been pronounced
``the least significant'' among the four extremal (maxmax, maxmin,
minmax, minmin) problems in~\cite{epw-tpstd-90}. Consequently, only a
trivial lower bound and an upper bound of $\frac{7}{15}n^2+O(n)$ has
been shown there.
Nevertheless, in~\cite{pbrass} sets of $n$ points in $\mathbb{R}^d$ in
general position are shown such that every triangulation of them has
$O(n^{5/3})$ tetrahedra, for points in $\mathbb{R}^3$, and in general
$O(n^{1/d+ \lceil d/2 \rceil\cdot(d-1)/d})$ simplices for
points in $\mathbb{R}^d$.
Furthermore, in~\cite{bmj-rpdg-05} this minmax problem is stated as
Open Problem~11 in the section ``Extremal Number of Special
Subconfigurations''.

In this direction we give the first, although not asymptotically
improving, non-trivial lower bound and show that for $d \ge 3$ every
set of $n$ points in general position in $\mathbb{R}^d$ admits a
triangulation of at least $dn+\Omega(\log n)$ simplices, for $n$
sufficiently large and $d$ constant.

\begin{figure}[!p]
  \centering
  \includegraphics[scale=0.85]{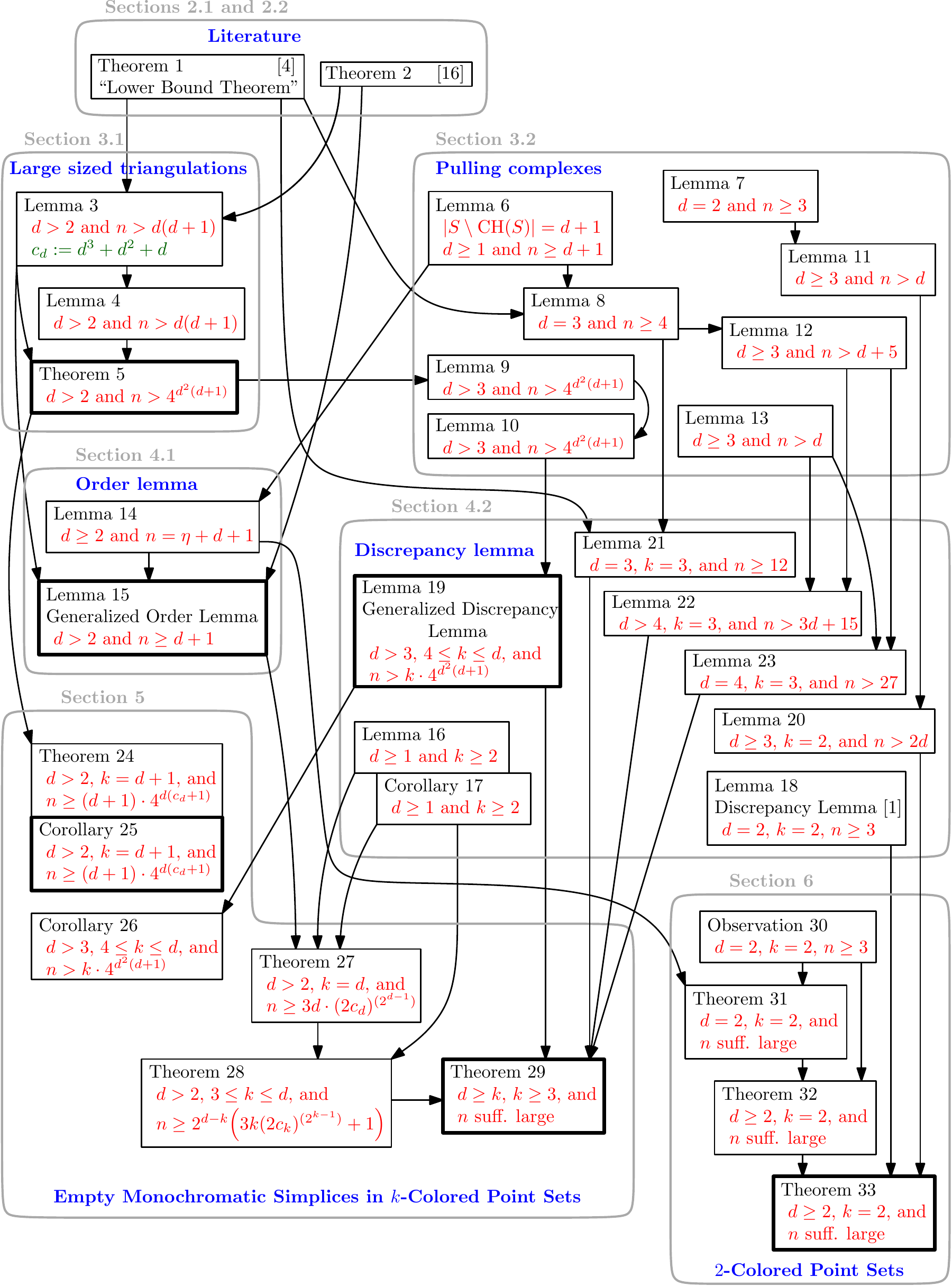}
  \caption{Roadmap through the paper.}
  \label{fig:roadmap}
\end{figure}

The paper is organized as follows: in Section~\ref{sec:prelim} known
results on simplicial complexes and triangulations are reviewed; in
Section~\ref{sec:complex_and_triang} new results on simplicial
complexes and triangulations are presented; using these results in
Section~\ref{sec:higher}, high dimensional versions of the Order and
Discrepancy Lemmas used in~\cite{triangmonojournal} are shown; in
Section~\ref{sec:ems} the lemmas of Section~\ref{sec:higher} are put
together to prove various results on the minimum number of empty
monochromatic simplices in sets of points in $\mathbb{R}^d$.
Our results are summarized in Table~\ref{tab:conclusion_k_over_d}.

\begin{table}[htb]
  \centering
  \begin{tabular}{@{}r|c|c@{}}
    & $d=2$ & $d\geq3$ \\ \hline
    \rule[0mm]{0mm}{3ex}$k=2$ & $\Omega(n^{4/3})$ (\cite{pachmono} and
    Thm~\ref{thm:ems_in_d2+k2})
    & $\Omega(n^{d-2/3})$ (Thm~\ref{thm:ems_in_d2+k2})  \\
    $3\leq k \leq d$ & --- & $\Omega(n^{d-k+1+2^{-d}})$ (Thm~\ref{thm:ems_in_d3+}) \\
    $k=d+1$ & none (\cite{chromaticvariants}) & at least linear$^*$ (Cor~\ref{cor:(d+1)})\\
    $k\geq d+2$ & none (\cite{chromaticvariants}) & unknown
  \end{tabular}
  \caption{Number of empty monochromatic $d$-simplices in $k$-colored sets
    of $n$ (sufficiently large) points in $\mathbb{R}^d$. $^*$ The
    linear lower bound for $d=3$ and $k=4$ has been proved already
    in~\cite{jorge3d}.}
  \label{tab:conclusion_k_over_d}
\end{table}

To provide a better general view on the paper, and especially to
visualize the interrelation between the many lemmas, we present a
``roadmap'' through the paper in \figurename~\ref{fig:roadmap}.
The lemmas (and theorems and corollaries) are shown in boxes, given
with their number, if applicable a special name, and the necessary
preconditions. Main results have a bold frame.
The lemmas are grouped to reflect their topical and section
correlation.
An arrow from a Lemma~A to a Lemma~B depicts, that the proof of
Lemma~B uses the result of Lemma~A. Hence, the preconditions for
Lemma~A have to be fulfilled in Lemma~B.
Theorem~\ref{thm:quadratic} (stated and proven in the ''Conclusions'') is not
depicted in \figurename~\ref{fig:roadmap}, as there is no
interrelation with other lemmas.

\section{Preliminaries}
\label{sec:prelim}

In this section, following the notation of Matou{\v s}ek
\cite{borsuk}, we state the definitions and known results regarding
simplicial complexes and triangulations, that will be needed
throughout the paper.
Note that in this paper we consider the number, $d$, of dimensions and
also the number, $k$, of different colors as constants. This means,
that $d$ and $k$ do not depend on the size, $n$, of the considered
finite set of points. But of course the required minimum size of the point set
might depend on $d$ and $k$.

\subsection{Simplicial Complexes}
\label{sec:simplcompl}

Let $X$ be a finite set of points in $\mathbb{R}^d$. The \emph{convex hull}
of $X$, denoted with $\Conv(X)$, is the intersection of all convex sets
containing $X$. Alternatively it may be defined as the set of points
that can be written as a \emph{convex combination} of elements of
$X$:
\[ \Conv(X) =\left\{\sum_{i=1}^{|X|} \alpha_i x_i \ \Bigg | \ x_i\in X, \,
  \alpha_i\in \mathbb{R}, \, \alpha_i \geq 0, \, \sum_{i=1}^{|X|}
  \alpha_i=1\,\right\} \mbox{ .}\]
We denote the boundary of $\Conv(X)$ with $\CH(X)$. A point of $X$ is
said to be a \emph{convex hull point} if it lies in $\CH(X)$,
otherwise it is called an \emph{interior point}.  A point set $X$ is
said to be in \emph{convex position} if every point of $X$ is a convex
hull point.

Let $\mathbf{0}$ denote the $d$-dimensional zero vector. A set of
points $\{x_1,\dots,x_n\}$ in $\mathbb{R}^d$ is said to be
\emph{affinely dependent} if there exist real numbers $(\alpha_1,
\dots, \alpha_n)$, not all zero, such that $\sum_{i=1}^n
\alpha_ix_i=\mathbf{0}$ and $\sum_{i=1}^n \alpha_i=0$. Otherwise
$\{x_1,\dots,x_n\}$ is said to be \emph{affinely independent}.
A set of points $X$ in $\mathbb{R}^d$ is in \emph{general position} if
each subset of $X$ with at most $d+1$ elements is affinely
independent.

A \emph{simplex} $\sigma$ is the convex hull of a finite affinely independent
set $A$ in $\mathbb{R}^d$. The elements of $A$ are called
the \emph{vertices} of $\sigma$. If $A$ consists of $m+1$ elements,
we say that $\sigma$ is of \emph{dimension} $\dim \sigma:=m$ or that
$\sigma$ is an $m$-simplex.
The convex hull of any subset of vertices of a simplex $\sigma$ is
called a \emph{face} of $\sigma$. A face of a simplex is again a
simplex.

A \emph{simplicial complex} $\mathcal{K}$ is a family of simplices
satisfying the following properties:
\begin{itemize}
\item Each face of every simplex in $\mathcal{K}$ is also a simplex of
  $\mathcal{K}$.
\item The intersection of two simplices $\sigma_1, \sigma_2 \in
  \mathcal{K}$ is either empty or a face of both, $\sigma_1$ and
  $\sigma_2$.
\end{itemize}

The \emph{vertex set} of $\mathcal{K}$ is the union of the vertex sets
of all simplices in $\mathcal{K}$.
We say that $\mathcal{K}$ is of \emph{dimension} $m$, if $m$ is the
highest dimension of any of its simplices. The \emph{size} of a
simplicial complex of dimension $m$ is the number of its simplices of
dimension $m$.
The \emph{$j$-skeleton} of $\mathcal{K}$ is the simplicial complex
consisting of all simplices of $\mathcal{K}$ of dimension at
most~$j$. Hence the $0$-skeleton is the vertex set of $\mathcal{K}$.

We now turn to finite sets of points in general position in
$\mathbb{R}^d$. Let $S$ be such a set of $n$ elements.
Note that since $S$ is in general position we may regard $\CH(S)$ as a
simplicial complex in a natural way. Such simplicial complexes are
called \emph{simplicial polytopes}.
It is known that every simplicial polytope satisfies:

\begin{theorem}[\cite{lowerbound}~Lower Bound Theorem]
  \label{thm:lowerbound}
  For a simplicial polytope of dimension $d$ let $f_m$ be the number
  of its $m$-dimensional faces. Then:
\begin{itemize}
\item $f_m \ge \binom{d}{m}f_0-\binom{d+1}{m+1}m$ \ for all \ $1 \le m
  \le d-2$ \ and
\item $f_{d-1} \ge (d-1)f_0-(d+1)(d-2)$ \ .
\end{itemize} 
\end{theorem}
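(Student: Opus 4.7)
The plan is to follow Kalai's rigidity-based approach to the Lower Bound Theorem. The key insight is that the inequality for $m=1$ controls all the others via a vertex-link argument and double-counting, so the heart of the proof lies in establishing $f_1 \geq d f_0 - \binom{d+1}{2}$.

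First I would view the $1$-skeleton of the polytope as a bar-and-joint framework in $\mathbb{R}^d$, with vertices placed at their polytope coordinates and edges as rigid bars. The rigidity matrix is an $f_1 \times d f_0$ matrix whose kernel always contains the $\binom{d+1}{2}$-dimensional subspace of trivial infinitesimal Euclidean motions. The central claim is that the boundary of a simplicial $d$-polytope ($d \geq 3$) is infinitesimally rigid, so the kernel has exactly this dimension; then the matrix has rank $d f_0 - \binom{d+1}{2}$, and since the rank is bounded by the number of rows we obtain $f_1 \geq df_0 - \binom{d+1}{2}$.

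For $2 \leq m \leq d-2$, I would induct on $d$. The link of each vertex $v$ is a simplicial $(d-2)$-sphere, and each $m$-face of the polytope containing $v$ corresponds to an $(m-1)$-face of $\mathrm{lk}(v)$; since each $m$-face is counted once per vertex, one has
\[ (m+1) f_m \;=\; \sum_{v} f_{m-1}(\mathrm{lk}(v)). \]
Applying the inductive bound $f_{m-1}(\mathrm{lk}(v)) \geq \binom{d-1}{m-1} f_0(\mathrm{lk}(v)) - \binom{d}{m}(m-1)$, using $\sum_v f_0(\mathrm{lk}(v)) = 2 f_1$, and plugging in the $m=1$ bound, the combinatorial identities $2d\binom{d-1}{m-1} - (m-1)\binom{d}{m} = (m+1)\binom{d}{m}$ and $d(d+1)\binom{d-1}{m-1} = m(m+1)\binom{d+1}{m+1}$ collapse everything to the desired $f_m$ bound. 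For $f_{d-1}$, the identity $2 f_{d-2} = d f_{d-1}$, which holds since the boundary is a simplicial pseudomanifold without boundary, combines with the bound for $f_{d-2}$ to give $f_{d-1} \geq (d-1)f_0 - (d+1)(d-2)$.

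The main obstacle is the infinitesimal rigidity claim. I would proceed by a double induction on the number of vertices $n$ and the dimension $d$, together with the gluing lemma: the union of two infinitesimally rigid frameworks in $\mathbb{R}^d$ sharing at least $d$ affinely independent vertices is rigid. For the inductive step one cuts the polytope by a hyperplane through a vertex (or contracts an edge), decomposing the boundary into pieces whose rigidity follows from induction on $d$ via their rigid links, and then glues. A subtle technical point is that the outer induction must be stated for arbitrary simplicial $(d-1)$-spheres, not just boundaries of polytopes, since a vertex link need not itself be polytopal; Whiteley's generalisation of the rigidity theorem to all simplicial $(d-1)$-spheres provides exactly what is needed, and I would invoke it as a black box rather than redoing it from scratch.
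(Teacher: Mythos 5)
The paper does not prove this statement at all: it is Barnette's Lower Bound Theorem, quoted verbatim from the cited reference and used as a black box, so there is no in-paper argument to compare against. Your proposal is a correct sketch of the standard modern proof (Kalai's rigidity argument for the edge inequality plus the McMullen--Perles--Walkup reduction for the higher faces). The combinatorial skeleton checks out: the incidence count $(m+1)f_m=\sum_v f_{m-1}(\mathrm{lk}(v))$, the identities $2d\binom{d-1}{m-1}-(m-1)\binom{d}{m}=(m+1)\binom{d}{m}$ and $(d+1)\binom{d}{m}=(m+1)\binom{d+1}{m+1}$, and the ridge--facet relation $2f_{d-2}=df_{d-1}$ all hold and do collapse to the stated bounds. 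Two remarks. First, the entire mathematical content is concentrated in the infinitesimal rigidity of the boundary of a simplicial $d$-polytope for $d\geq 3$; your sketched hyperplane-cut/edge-contraction/gluing induction for it is too vague to stand on its own (this is a genuinely delicate theorem of Dehn--Alexandrov--Whiteley), but since you explicitly invoke Whiteley's theorem as a black box the argument is a legitimate, if not self-contained, reduction to a known hard result --- exactly the status the theorem already has in the paper. Second, your worry that vertex links might not be polytopal is unfounded here: the link of a vertex in the boundary complex of a simplicial $d$-polytope is combinatorially the boundary complex of the vertex figure, which is itself a simplicial $(d-1)$-polytope, so for this statement the induction stays within the class of polytopes and Whiteley's extension to arbitrary simplicial spheres is not needed.
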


Note that in the Lower Bound Theorem, the word \emph{dimension} refers
to the dimension of the simplicial polytope as a polytope. Hence, a
three dimensional simplicial polytope would be a two dimensional
simplicial complex.

\subsection{Triangulations}

A \emph{triangulation} $\mathcal{T}$ of $S$ is a simplicial complex
such that its vertex set is $S$ and the union of all simplices of
$\mathcal{T}$ is $\Conv(S)$. This definition generalizes the usual
definition of triangulations of planar point sets. The \emph{size} of
a triangulation is the number of its $d$-simplices.
The minimum size of any triangulation of $S$ is known to be $n-d$. We
explicitly mention this result for further use:

\begin{theorem}[\cite{n-d}]\label{thm:n-d}
  Every triangulation of a set of $n$ points in general position in
  $\mathbb{R}^d$ has size at least $n-d$.
\end{theorem}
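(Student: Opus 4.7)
The plan is to exploit the connectivity of the dual graph of $\mathcal{T}$ and argue by counting the distinct vertices contributed by each $d$-simplex along a spanning tree.

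Let $t$ denote the size of $\mathcal{T}$, i.e., the number of its $d$-simplices. I would first observe that the \emph{dual graph} $G$ of $\mathcal{T}$---whose nodes are the $d$-simplices of $\mathcal{T}$ and whose edges join pairs that share a $(d-1)$-face---is connected. Indeed, $\bigcup_{\sigma\in\mathcal{T}}\sigma = \Conv(S)$ is connected, and a partition of the $d$-simplices into two groups sharing only lower-dimensional faces would split $\Conv(S)$ into two closed sets whose intersection has dimension at most $d-2$; since removing a codimension-$2$ subset from a connected $d$-manifold leaves it connected, such a partition would contradict the connectedness of $\Conv(S)$.

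Next, I would fix a spanning tree of $G$ and order the $d$-simplices as $\sigma_1, \sigma_2, \ldots, \sigma_t$ via a traversal of this tree, so that for every $i\geq 2$ the simplex $\sigma_i$ shares a $(d-1)$-face with some earlier $\sigma_j$ with $j<i$. Finally, I would count the distinct vertices cumulatively along this order: $\sigma_1$ contributes $d+1$ new vertices, while each later $\sigma_i$ shares a $(d-1)$-face, and hence $d$ of its $d+1$ vertices, with some earlier simplex, and thus contributes at most one new vertex. The total number of distinct vertices used by $\mathcal{T}$ is therefore at most $(d+1) + (t-1) = t + d$. Since every point of $S$ is a vertex of $\mathcal{T}$, this yields $n\leq t+d$, i.e., $t\geq n-d$.

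I do not anticipate any serious obstacle; the only non-trivial step is the connectivity of the dual graph, which follows from elementary topology of the underlying manifold $\Conv(S)$, and the remainder is a straightforward vertex-counting argument. As a sanity check, the bound is tight for $n=d+1$ (a single simplex) and for a stacked simplicial polytope in convex position, where each of the $t=n-d$ simplices indeed introduces exactly one new vertex beyond its immediate predecessor along the tree.
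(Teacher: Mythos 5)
The paper imports Theorem~\ref{thm:n-d} from the literature (the citation \cite{n-d}) and gives no proof of its own, so there is no internal argument to compare against. Your proof is correct and is essentially the standard one: the dual graph of a triangulation of a full-dimensional point set is connected (your separation argument is sound, since $\Conv(S)$ minus a finite union of faces of dimension at most $d-2$ remains path-connected for $d\geq 2$), and the spanning-tree traversal bounds the number of distinct vertices by $(d+1)+(t-1)=t+d$; because the paper defines a triangulation of $S$ to have vertex set exactly $S$, this gives $n\leq t+d$, i.e., $t\geq n-d$. The only implicit hypothesis is $n\geq d+1$, needed so that $\Conv(S)$ is full-dimensional and the dual-graph argument applies; this is harmless, and your tightness check (a single simplex, and stacked configurations) is apt.
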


\begin{figure}[htb]
  \centering
  \includegraphics{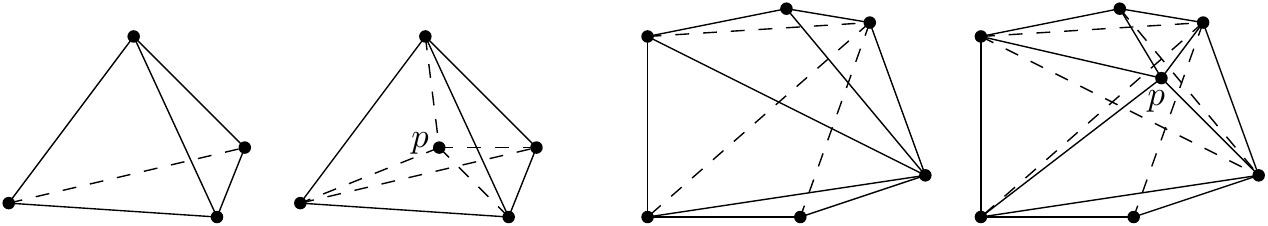}
  \caption{Example in $\mathbb{R}^3$ for inserting a point $p$ into a
    triangulation with (left) $p$ inside the convex hull and (right)
    $p$ outside the convex hull.}
  \label{fig:ins}
\end{figure}

We will use the following operation of \emph{inserting a point $p$
  into a triangulation $\mathcal{T}$} frequently:
Let $p$ be a point not in $S$ but such that $S \cup \{p\}$ is also in
general position, and let $\mathcal{T}$ be a triangulation of $S$.
If $p$ lies in $\Conv(S)$ then $p$ is contained in a unique
$d$-simplex $\sigma$ of $\mathcal{T}$. We remove $\sigma$ from
$\mathcal{T}$ and replace it with the $(d+1)$ $d$-simplices formed by
taking the convex hull of $p$ and each of the \mbox{$(d+1)$} \
$(d-1)$-dimensional faces of $\sigma$.
If, on the other hand, $p$ lies outside $\Conv(S)$ then a set
$\mathcal{F}$ of $(d-1)$-dimensional faces of $\CH(S)$ is visible from
$p$. We get a set of $d$-simplices formed by
taking the convex hull of $p$ and each face of $\mathcal{F}$, and add
these simplices to $\mathcal{T}$.
In either case the resulting family of simplices is a triangulation of
$S \cup \{p\}$ (see Figure \ref{fig:ins}).

We distinguish two different types of triangulations of a set $S$ of $n$
points in general position in $\mathbb{R}^d$ by their construction:
A \emph{shelling triangulation} of $S$ is constructed as
follows. Choose any ordering $p_1,p_2, \dots, p_n$ of the elements of
$S$ and let $S_i=\{p_1,\dots,p_i\}$. Start by triangulating $S_{d+1}$
with only one simplex. Afterwards, for every $i>d+1$ create the
triangulation of $S_{i}$ by inserting $p_{i}$ into the
triangulation of $S_{i-1}$. The final triangulation of this process, that
of $S_n$, is a shelling triangulation.
A \emph{pulling triangulation} of $S$ is constructed by choosing (if
it exists) a point $p$ of $S$, such that
\mbox{$S\!\setminus\!((\CH(S)\cap S)\cup\{p\})=\emptyset$}. Then
\mbox{$S\! \setminus\! \{p\}$} is in convex position. Construct a
$d$-simplex with $p$ and each $(d-1)$-dimensional face of $\CH(S)$
that does not contain~$p$.

\section{Results on Triangulations and Simplicial Complexes}
\label{sec:complex_and_triang}

In this section we present some results on triangulations and
simplicial complexes that will be needed later, but are also of
independent interest.
We begin by showing that every point set can be triangulated with a
``large number'' of simplices. We use the same strategy as
in~\cite{jorge3d}.

\subsection{Large Sized Triangulations}

First we prove an at least possible size for a triangulation of a
convex set of points, by building a shelling triangulation for a
special sequence of points.

\begin{lemma}\label{lem:convtriang}
  Every set $S$ of $n>d(d+1)$ points in convex and general position in
  $\mathbb{R}^d$ $(d>2)$ has a triangulation of size at least
  $(d+1)n-c_d$, with $c_d=d^3+d^2+d$.
\end{lemma}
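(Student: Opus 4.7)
The plan is to build a shelling triangulation of $S$ with a carefully chosen insertion order, which we design in reverse by repeatedly removing a ``good'' convex hull vertex. For a convex hull vertex $v$ of $S$, let $V(v)$ denote the number of facets of $\CH(S \setminus \{v\})$ visible from $v$; this is precisely the number of new $d$-simplices added when $v$ is inserted (from outside the hull) into a triangulation of $S \setminus \{v\}$. We aim to find a $v$ with $V(v) \geq d+1$, delete it, and recurse on $S \setminus \{v\}$, which is still in convex and general position.

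The heart of the argument is to show such a vertex exists whenever $n > d(d+1)$. Writing $D(v)$ for the number of facets of $\CH(S)$ containing $v$, the identity $F_{d-1}(\CH(S \setminus \{v\})) = F_{d-1}(\CH(S)) - D(v) + V(v)$ together with the double count $\sum_{v \in S} D(v) = d \cdot F_{d-1}(\CH(S))$ yields
\[
\sum_{v \in S} V(v) \;=\; \sum_{v \in S} F_{d-1}(\CH(S \setminus \{v\})) \;-\; (n-d) \cdot F_{d-1}(\CH(S)) .
\]
Applying the Lower Bound Theorem (Theorem~\ref{thm:lowerbound}) to each term on the right---the exact value $F_{d-1}(\CH(S)) = 2n - 4$ suffices for $d=3$, while for $d \geq 4$ a case distinction according to how close $F_{d-1}(\CH(S))$ lies to its LBT minimum is needed---one checks that $\sum_{v \in S} V(v) > d \cdot n$ whenever $n > d(d+1)$, which forces at least one vertex $v$ with $V(v) \geq d+1$.

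Once the key step is established, the recursion is straightforward: peel off good vertices one by one until the remaining subset $S'$ has $|S'| = d(d+1)$. Theorem~\ref{thm:n-d} gives a base triangulation of $S'$ of size at least $d(d+1) - d = d^2$. Reinserting the $n - d(d+1)$ removed vertices in reverse order, each contributing at least $d+1$ new $d$-simplices, produces a triangulation of $S$ of size at least $d^2 + (d+1)(n - d(d+1)) = (d+1)\,n - (d(d+1)^2 - d^2) = (d+1)\,n - c_d$, with $c_d = d^3 + d^2 + d$ as claimed. The main obstacle will be establishing existence of a good vertex for $d \geq 4$: there the Lower Bound Theorem may be far from tight and $F_{d-1}(\CH(S))$ can substantially exceed $(d-1)n$, so the plain averaging weakens and a more delicate regime analysis (or a fallback to a pulling-type construction when $F_{d-1}(\CH(S))$ is large) must be invoked.
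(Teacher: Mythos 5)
Your overall skeleton (reverse shelling: peel off a ``good'' convex hull vertex, bottom out at $d(d+1)$ points via Theorem~\ref{thm:n-d}, reinsert each vertex for $d+1$ new simplices) is exactly the paper's, and your arithmetic for the constant $c_d$ is right. The problem is the key step, and you have correctly diagnosed it yourself: for $d\geq 4$ your facet-counting argument does not establish the existence of a vertex $v$ with $V(v)\geq d+1$, and I do not see how to close it along the lines you sketch. Concretely, write $\Phi=F_{d-1}(\CH(S))$. Your averaging bound is $\sum_v V(v)\geq n\bigl((d-1)(n-1)-(d+1)(d-2)\bigr)-(n-d)\Phi$, which is positive only when $\Phi$ is within roughly $O(d^2)$ of its Lower Bound Theorem minimum $\approx(d-1)n$; your fallback (a pulling triangulation from the vertex lying on fewest facets) yields only about $(1-d/n)\Phi$ simplices and hence suffices only when $\Phi\gtrsim(d+1)n$. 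In the intermediate regime, say $\Phi\approx dn$, neither branch works, and the trivial bound $F_{d-1}(\CH(S\setminus\{v\}))\geq\Phi-D(v)$ combined with $\sum_v D(v)=d\Phi$ only gives $\sum_v V(v)\geq0$. For $d=3$ your computation is fine (the exact value $F_2=2m-4$ makes $\sum_v V(v)=4n-12>3n$ precisely when $n>12=d(d+1)$), but that is the one dimension where the LBT is an equality.

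The paper avoids this entirely by counting \emph{edges} rather than facets: the $m=1$ case of Theorem~\ref{thm:lowerbound} gives at least $dn-\frac{d(d+1)}{2}$ edges in the $1$-skeleton of $\CH(S)$, hence average degree exceeding $2d-1$ once $n>d(d+1)$, hence a vertex $p$ of degree $\varrho\geq 2d$. It then converts degree into visible facets by central projection from $p$ onto a hyperplane separating $p$ from the rest: the facets of $\CH(S\setminus\{p\})$ visible from $p$ project to a triangulation of the $\varrho$ projected neighbors in $\mathbb{R}^{d-1}$, which by Theorem~\ref{thm:n-d} has at least $\varrho-(d-1)\geq d+1$ simplices, i.e.\ $V(p)\geq d+1$. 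This edge-based bound is linear in $n$ in every dimension, so no regime analysis is needed. If you want to salvage your write-up, replace your averaging step with this degree-plus-projection argument; the rest of your proof then goes through verbatim.
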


\begin{proof}
  The $1$-skeleton of $\CH(S)$ is a graph of $n$ vertices
  and, by the Lower Bound Theorem (Theorem~\ref{thm:lowerbound}, for
  $m=1$), of at least $dn-\frac{d(d+1)}{2}$ edges. Therefore, as long
  as $n>d(d+1)$ there will be a vertex of degree at least $2d$ in this
  graph. 

  Set $S_n:=S$ and let $G_n$  be the $1$-skeleton (as a graph)
  of $\CH(S_n)$. In general once $S_i$ is defined,
  let $G_i$ be the $1$-skeleton (as a graph) of $\CH(S_i)$. Let
  $p_i$ be a vertex of degree at least $2d$ in $G_i$, with $n\geq i >
  d(d+1)$.
  We construct a shelling triangulation $\mathcal{T}_n$ of $S_n$, with
  size as claimed in the lemma.

  Starting with $S_n$, iteratively remove a vertex $p_i$ from $S_i$,
  i.e., $S_{i-1} = S_i \setminus \{p_i\}$. Observe that $|S_i| =
  i$. The iteration stops with $S_{i-1} = S_{d(d+1)}$ as $i > d(d+1)$.
  Construct an arbitrary shelling triangulation $\mathcal{T}_{d(d+1)}$
  of $S_{d(d+1)}$. By Theorem~\ref{thm:n-d}, $\mathcal{T}_{d(d+1)}$ has
  size at least $d(d+1)-d=d^2$. Complete $\mathcal{T}_{d(d+1)}$ to a
  shelling triangulation $\mathcal{T}_n$ by inserting the points $p_i$
  in reversed order of their removal ($i$ from $d(d+1)+1$ to $n$). 

  We prove that with each inserted point $p_i$ at least $(d+1)$
  $d$-simplices are added to the triangulation.
  Let $\varrho_i$ be the degree of $p_i$ in $G_i$ and recall that $\varrho_i
  \geq 2d$. Consider the neighbors $q_1,\dots,q_{\varrho_i}$ of $p_i$ in
  $G_i$.
  Let $\Pi$ be a $(d-1)$-dimensional hyperplane separating $p_i$ and
  $S_{i-1}$, and let $q_1',\dots,q_{\varrho_i}'$ be the set of
  intersections of $\Pi$ with the lines spanned by $p_i$ and each of
  $q_1,\dots,q_{\varrho_i}$.

  Note that $q_1',\dots,q_{\varrho_i}'$ are a set of points in convex
  position in $\mathbb{R}^{d-1}$ and that the $(d-1)$-dimensional
  faces of $\CH(S_{i-1})$, which are visible to $p_i$, project to a
  triangulation of $q_1',\dots,q_{\varrho_i}'$ in~$\Pi$.
  By Theorem \ref{thm:n-d}, every triangulation of $\varrho_i$ points in
  $\mathbb{R}^{d-1}$ has size at least $\varrho_i-(d-1) \ge d+1$.
  Thus, at least $(d+1)$ $d$-simplices are added when inserting
  $p_i$.
  Hence, the constructed shelling triangulation $\mathcal{T}_n$ has
  size at least $d^2+(d+1)(n-d(d+1))$, which is the claimed bound of
  $(d+1)n-c_d$, with $c_d=d(d+1)^2-d^2=d^3+d^2+d$.
\end{proof}

Using this result it is easy to give a lower bound on the
triangulation size for general point sets in dependence of a certain
subset property.

\begin{lemma}\label{lem:nested}
  Let $S$ be a set of points in general position in $\mathbb{R}^d$
  $(d>2)$. Let $P$ and $Q$ be two disjoint sets, such that $S=P\cup Q$
  and $Q$ is in convex position.
  If $|Q|>d(d+1)$ then there exists a triangulation of $S$ of size at
  least $(d+1)|Q|+|P|-c_d$, with $c_d$ defined as in
  Lemma~\ref{lem:convtriang}.
\end{lemma}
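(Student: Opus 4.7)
The plan is to build the required triangulation in two stages: first triangulate $Q$ using Lemma~\ref{lem:convtriang}, and then introduce the points of $P$ one by one using the point-insertion operation from Section~\ref{sec:prelim}, keeping track of how many $d$-simplices are gained at each insertion.

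More precisely, since $Q$ is in convex and general position with $|Q| > d(d+1)$, Lemma~\ref{lem:convtriang} yields a triangulation $\mathcal{T}_Q$ of $Q$ of size at least $(d+1)|Q|-c_d$. Enumerate the points of $P$ in any order as $p_1,\dots,p_{|P|}$, and let $\mathcal{T}_0 := \mathcal{T}_Q$ and $\mathcal{T}_i$ be the result of inserting $p_i$ into $\mathcal{T}_{i-1}$. Then $\mathcal{T}_{|P|}$ is a triangulation of $S$, and it suffices to show that each insertion strictly increases the size of the current triangulation by at least one.

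For the increment step I would distinguish two cases according to the position of $p_i$ relative to $\Conv(Q \cup \{p_1,\dots,p_{i-1}\})$. If $p_i$ lies in the interior, then by the insertion rule exactly one $d$-simplex is removed and $d+1$ new $d$-simplices are created, for a net gain of $d \geq 1$ (in fact $d \geq 3$, since $d > 2$). If $p_i$ lies outside the convex hull, then, because it is not interior, at least one $(d-1)$-face of the convex hull is visible from $p_i$; the insertion rule then adds one $d$-simplex for every such visible face, for a net gain of at least $1$. In either case $|\mathcal{T}_i| \geq |\mathcal{T}_{i-1}| + 1$.

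Combining the two stages, the final triangulation $\mathcal{T}_{|P|}$ of $S$ has size at least $(d+1)|Q| - c_d + |P|$, which is the claimed bound. There is no real obstacle here; the only point that deserves explicit mention is the case distinction in the insertion step and the observation that a point outside the convex hull has at least one visible facet, so that the insertion genuinely adds at least one $d$-simplex.
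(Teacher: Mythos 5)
Your proposal is correct and follows essentially the same route as the paper: triangulate $Q$ via Lemma~\ref{lem:convtriang} and then insert the points of $P$ one at a time, noting that each insertion adds at least one $d$-simplex. The paper states the insertion gain without elaboration, whereas you spell out the interior/exterior case distinction, which is a harmless (and arguably welcome) extra level of detail.
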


\begin{proof}
  By Lemma~\ref{lem:convtriang}, $Q$ has a triangulation $\mathcal{T}$
  of size at least $(d+1)|Q|-c_d$, if $|Q|>d(d+1)$. Inserting each
  point of $P$ into $\mathcal{T}$ adds at least one $d$-simplex to
  $\mathcal{T}$ per point in $P$. This results in a triangulation of
  $S$ with size at least $(d+1)|Q|+|P|-c_d$.
\end{proof}

Combining the previous two lemmas we prove a new non-trivial lower
bound for the size of triangulations with an additive logarithmic
term.

\begin{theorem}\label{thm:dn+log}
  Every set $S$ of $n>4^{d^2(d+1)}$ points in general position in
  $\mathbb{R}^d$ $(d>2)$, with $h$ convex hull points, has a
  triangulation of size at least
  $dn+\max{\left\{h,\frac{\log_2(n)}{2d}\right\}}-c_d$, with $c_d$ as
  defined in Lemma~\ref{lem:convtriang}.
\end{theorem}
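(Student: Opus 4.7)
I would build a triangulation of $S$ matching the claimed bound by a case analysis on whether $h$ or $\log_2(n)/(2d)$ dominates the max, constructing it from a convex substructure outward via interior and exterior insertions. For the $dn+h-c_d$ term, let $Q_1$ denote the $h$ convex hull points of $S$, which are in convex position. If $h>d(d+1)$, Lemma~\ref{lem:convtriang} produces a triangulation of $Q_1$ with at least $(d+1)h-c_d$ simplices; if $h\le d(d+1)$, Theorem~\ref{thm:n-d} gives a triangulation of $Q_1$ with at least $h-d$ simplices, and the constant $c_d=d^3+d^2+d$ is large enough to absorb the resulting discrepancy in the desired bound over this bounded range of $h$. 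I then insert the remaining $n-h$ points one at a time; because $\Conv(Q_1)=\Conv(S)$, every such insertion lies in the interior of the current triangulation and, by the insertion rule recalled in Section~\ref{sec:prelim}, contributes a net gain of exactly $d$ simplices. The total is at least $(d+1)h+d(n-h)-c_d=dn+h-c_d$, matching the theorem whenever $h\ge\log_2(n)/(2d)$.

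For the $dn+\log_2(n)/(2d)-c_d$ term, needed when $h$ is small, I would enlarge the convex substructure by using a deeper layer of the onion peeling $Q_1,\ldots,Q_L$. Triangulating some layer $Q_i$ of size $h_i>d(d+1)$ via Lemma~\ref{lem:convtriang}, inserting the deeper layers $Q_{i+1}\cup\cdots\cup Q_L$ interior to $\Conv(Q_i)$ (each gaining $+d$), and inserting the outer layers $Q_1\cup\cdots\cup Q_{i-1}$ as exterior points (each gaining at least $1$, as in the proof of Lemma~\ref{lem:nested}), a direct count yields a triangulation of size at least $dn+h_i-(d-1)(h_1+\cdots+h_{i-1})-c_d$. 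Choosing $i$ to maximize $h_i-(d-1)(h_1+\cdots+h_{i-1})$ and invoking the hypothesis $n>4^{d^2(d+1)}$ (which forces $\log_2(n)/(2d)>d(d+1)$) is enough in the generic case, where some inner layer is substantially larger than the cumulative outer part.

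The main obstacle is when every layer is too small for this layer-selection step to succeed---for instance, when all $h_i$ are close to $d+1$, so that $L\ge n/(d(d+1))$ is forced to be large. In that regime one has to amortize the shelling across many layers, constructing a shelling order of $S$ in which at least $\lceil\log_2(n)/(2d)\rceil$ insertions are of a convex hull vertex of degree at least $2d$ in the then-current partial-set convex hull. Each such insertion contributes $d+1$ rather than $d$ simplices, exactly as in the proof of Lemma~\ref{lem:convtriang}. Since the Lower Bound Theorem furnishes such a high-degree vertex whenever the current convex hull has more than $d(d+1)$ vertices, the combinatorial heart of the proof is the bookkeeping that shows the hypothesis $n>4^{d^2(d+1)}$ leaves enough room in an iterated peel-and-reinsert recursion to accumulate the required logarithmic number of $+1$-bonus operations on top of the baseline $dn-O(1)$ from interior insertions.
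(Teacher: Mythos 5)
Your first case ($h$ dominating the max) is essentially the paper's: triangulate the hull points via Lemma~\ref{lem:convtriang} and insert the interior points for $+d$ each, giving $dn+h-c_d$. (Your sub-case $h\le d(d+1)$ is actually vacuous here, since $n>4^{d^2(d+1)}$ forces $\log_2(n)/(2d)>d(d+1)$, so whenever $h$ realizes the max you automatically have $h>d(d+1)$.) The problem is your second case. Your layer-selection count $dn+h_i-(d-1)(h_1+\cdots+h_{i-1})-c_d$ is correct, but, as you yourself observe, it collapses when every onion layer is small --- e.g.\ when all $h_j$ are close to $d+1$, the best choice is $i=1$ and you get only $dn+O(d)-c_d$. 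Your fallback, an ``iterated peel-and-reinsert recursion'' accumulating $\lceil\log_2(n)/(2d)\rceil$ bonus insertions of hull vertices of degree at least $2d$, is exactly the part that is not proved, and it is the heart of the theorem. The Lower Bound Theorem supplies a degree-$2d$ vertex only when the \emph{current} hull has more than $d(d+1)$ vertices, and nothing in the onion structure guarantees that any shelling order keeps revisiting such a state: with $\Theta(n)$ layers each of size $d+1$, each layer is a simplex whose points have degree $d$ in its own hull, and peeling/reinserting along layers gives no control on the degree of a reinserted point in the intermediate hulls. In effect you are asserting, without proof, that every $n$-point set admits a shelling order with $\Omega(\log n)$ high-degree hull insertions.

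The missing idea is the Erd\H{o}s--Szekeres theorem. The paper's proof of the small-$h$ case takes a convex-position subset $Q\subset S$ with $|Q|>\log_2(n)/2$ (guaranteed by Erd\H{o}s--Szekeres with its best known bound, and cutting across onion layers rather than being one of them), applies Lemma~\ref{lem:nested} to $Q$ together with $P'=P\setminus Q$ (the at most $h\le\log_2(n)/(2d)$ hull points of $S$, the only points that can lie outside $\Conv(Q)$), and inserts the rest as interior points; the count $dn+|Q|-(d-1)|P'|-c_d$ then yields $dn+\log_2(n)/(2d)-c_d$. This is why the bonus term is logarithmic in the first place --- it is the Erd\H{o}s--Szekeres guarantee on the largest convex subset --- and any completion of your amortized-shelling sketch would have to locate an equivalent logarithmic-size convex structure, which onion peeling alone does not provide.
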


\begin{proof}
  Let $P$ be the set of convex hull points of $S$. We distinguish two
  cases:
  \begin{itemize}
  \item{$|P|=h > \log_2(n)/(2d)$.} By Lemma~\ref{lem:convtriang}, there
    exists a triangulation of $P$ of size at least $(d+1)h-c_d$, as
    $h > d(d+1)$.
    Insert the remaining $n-h$ points of $S \setminus P$ into this
    triangulation. Since these points are inside $\Conv(P)$, each of
    them contributes with $d$ additional $d$-simplices to the final
    triangulation. Therefore, the resulting triangulation has size at
    least $dn+h-c_d > dn+\frac{\log_2(n)}{2d}-c_d$.

  \item{$|P|=h \leq \log_2(n)/(2d)$.} By the Erd\H os-Szekeres
    Theorem (see~\cite{happyend}) and its best known upper bound
    (see~\cite{upperes}), $S$ contains a subset $Q$ of at least $|Q| >
    \frac{\log_2(n)}{2} > d(d+1)$ points in convex position.
    Let $P'=P \setminus Q$. Apply Lemma~\ref{lem:nested} to obtain a
    triangulation $\mathcal{T}$ of $P'\cup Q$ of size at least
    $(d+1)|Q|+|P'|-c_d$. Insert the remaining points of $S \setminus
    (P' \cup Q)$ into $\mathcal{T}$.
    Since these inserted points are in the interior of $\Conv(P' \cup
    Q)$, each of them contributes with $d$ additional $d$-simplices to
    the final triangulation. Therefore, this triangulation has size at
    least $d(n-|Q|-|P'|)+(d+1)|Q|+|P'|-c_d = dn+|Q|-(d-1)|P'|-c_d > 
     dn+\frac{\log_2(n)}{2}-(d-1)\frac{\log_2(n)}{2d}-c_d\geq 
    dn+\frac{\log_2(n)}{2}-\frac{\log_2(n)}{2}+\frac{\log_2(n)}{2d}-c_d$,
    which is $dn+\frac{\log_2(n)}{2d}-c_d$.
  \end{itemize}
\end{proof}

Note that $c_d$ in Lemma~\ref{lem:convtriang} can be improved to
$\frac{d(d+1)^2}{2}+\frac{d(d+1)}{12} =
\frac{d^3}{2}+\frac{13d^2}{12}+\frac{7d}{12}$.
Instead of stopping the process at $S_{d(d+1)}$, we
continue the iteration using a vertex degree of \mbox{$\,2d\!-\!1\,$} for $S_i$ with
$d(d+1)\geq i > \frac{d(d+1)}{2}$, a vertex degree of \mbox{$\,2d\!-\!2\,$} for $S_i$
with $\frac{d(d+1)}{2}\geq i > \frac{d(d+1)}{3}$, and so on.
This way, instead of a triangulation of size at least $d^2$, we can
guarantee a triangulation $\mathcal{T}_{d(d+1)}$ of size at least
$\sum_{i=1}^d\left(\left(2d-i-(d-1)\right)\frac{d(d+1)}{i(i+1)}\right)
\geq \frac{3}{4}d(d+1)^2-d(d+1)\cdot
\min{\left\{\frac{d+1}{4}+\frac{1}{12},\ln{(d+1)}\right\}}$, which
results in the claimed improvement of $c_d$ for $d\geq3$.
Thus, for $d=3$ Theorem~\ref{thm:dn+log} can be improved to
$3n+\max{\left\{h,\frac{\log_2{n}}{6}\right\}}-25$.
Note that this corresponds to the bound from~\cite{epw-tpstd-90}, that
every set of $n$ points in general position in $\mathbb{R}^3$, with
$h$ convex hull points, has a tetrahedrization of size at least
$3(n-h)+4h-25$ for $h\geq13$.

\subsection{Pulling Complexes}

Let $S$ be a set of $n$ points in general position in $\mathbb{R}^d$.
In this section we present lemmas that allow us to construct
$d$-simplicial complexes of large size on $S$, such that their
$d$-simplices contain a pre-specified subset of $S$ in their vertex
set.
We begin with a result for point sets, whose convex hull is a simplex.

\begin{lemma}\label{lem:starsimplex3d}
  Let $S$ be a set of $n \geq d+1$ points in general position in
  $\mathbb{R}^d$ $(d\geq 1)$, such that $\Conv(S)$ is a $d$-simplex.
  For every convex hull point $p$ of $S$, there exists a triangulation
  of $S$ such that $(d-1)n-d^2+2$ of its $d$-simplices have $p$ as a
  vertex.
\end{lemma}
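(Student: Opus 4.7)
The plan is to build a shelling triangulation by inserting the interior points of $S$ one-by-one in a carefully chosen order, so that each insertion enlarges the star of $p$ as much as possible. Let $v_1,\dots,v_d$ be the other $d$ hull vertices of $S$ (they span the facet $F$ of $\Conv(S)$ opposite to $p$), and pick an affine functional $h:\mathbb{R}^d\to\mathbb{R}$ that vanishes on the hyperplane of $F$, is positive at $p$, and takes pairwise distinct values on the points of $S$ (a generic choice of direction suffices). List the interior points of $S$ as $q_1,q_2,\dots,q_{n-d-1}$ in strictly increasing order of $h$. Start from the trivial triangulation $\mathcal{T}_0=\{\Conv(S)\}$ and successively build $\mathcal{T}_i$ from $\mathcal{T}_{i-1}$ by inserting $q_i$ using the operation described in Section~\ref{sec:prelim}.

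The core claim is that at every step the point $q_i$ lies in a simplex of $\mathcal{T}_{i-1}$ that contains $p$. I prove this by induction on $i$, carrying along the invariant that every $d$-simplex of $\mathcal{T}_{i-1}$ not containing $p$ has all its vertices in $V_{i-1}:=\{v_1,\dots,v_d,q_1,\dots,q_{i-1}\}$. The invariant is preserved because, when $q_i$ is inserted into a $p$-containing simplex $\sigma=\{p,u_1,\dots,u_d\}$, the resulting $d+1$ simplices split into $d$ that still contain $p$ (the cones of $q_i$ over the facets of $\sigma$ through $p$) plus exactly one non-$p$ simplex, namely $\{q_i\}\cup(\sigma\setminus\{p\})$, whose vertices lie in $V_i$. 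Given the invariant, the union of all non-$p$ simplices in $\mathcal{T}_{i-1}$ is contained in $\Conv(V_{i-1})$, and since $h$ is affine its maximum over this convex hull is attained at a vertex; but $h$ is $0$ on every $v_j$ and at most $h(q_{i-1})$ on every $q_j$ with $j\le i-1$. Hence every point of the non-$p$ region has $h$-value at most $h(q_{i-1})<h(q_i)$, so $q_i$ cannot lie in any non-$p$ simplex. By general position, $q_i$ then lies in the interior of a unique simplex of $\mathcal{T}_{i-1}$, which must therefore contain $p$.

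Once the claim is available the counting is immediate: inserting $q_i$ into a $p$-containing simplex removes one such simplex and adds $d+1$ new ones of which $d$ still contain $p$, so the number of $p$-incident simplices grows by exactly $d-1$ per insertion. Starting from the one $p$-containing simplex $\Conv(S)$ and performing all $n-d-1$ insertions yields $1+(d-1)(n-d-1)=(d-1)n-d^2+2$ simplices incident to $p$. The main obstacle is the key claim; the neat point is that choosing an affine height function that vanishes on $F$ turns a potentially messy geometric constraint (keeping $q_i$ out of the non-$p$ region) into the one-line observation that non-$p$ simplices have vertices of bounded height, which is guaranteed by the invariant together with the chosen insertion order.
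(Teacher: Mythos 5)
Your argument is correct and arrives at exactly the paper's count, but it is organized differently. The paper proceeds by recursion: it inserts the interior point \emph{closest to the facet of $\Conv(S)$ opposite $p$}, observes that the one resulting simplex not containing $p$ is empty of further points of $S$, and then recurses independently inside each of the $d$ simplices that do contain $p$ --- so the ``closest to the opposite facet'' criterion is re-evaluated with respect to a \emph{different} facet at every level of the recursion. You instead fix a single global affine height $h$ vanishing on the base facet $F$ and do one sweep in increasing height, showing once and for all that the part of $\Conv(S)$ not covered by the star of $p$ is always contained in $\Conv(V_{i-1})$ and hence lies strictly below the next point to be inserted. Both proofs rest on the same local fact (inserting a point into a $p$-simplex replaces it by $d$ new $p$-simplices and one non-$p$-simplex, a net gain of $d-1$) and both give $1+(d-1)(n-d-1)=(d-1)n-d^2+2$; your version trades the paper's recursion and the bookkeeping of the interior-point counts $n_i$ for a short convexity argument. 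One small repair is needed: since $h$ must vanish on the hyperplane of $F$, its direction is forced, so you cannot choose it ``generically'' to make the values on $S$ pairwise distinct --- two interior points may well lie on a common translate of that hyperplane. This is harmless: break ties arbitrarily in the ordering; if $h(q_i)=h(q_{i-1})$, then $q_i$ could lie in $\Conv(V_{i-1})$ only in the face where $h$ attains its maximum, i.e.\ in the convex hull of the at most $d-1$ earlier points at that same height (general position allows at most $d$ points of $S$ on a hyperplane), and that would make at most $d$ points of $S$ affinely dependent, a contradiction.
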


\begin{figure}[htb]
  \centering
  \includegraphics{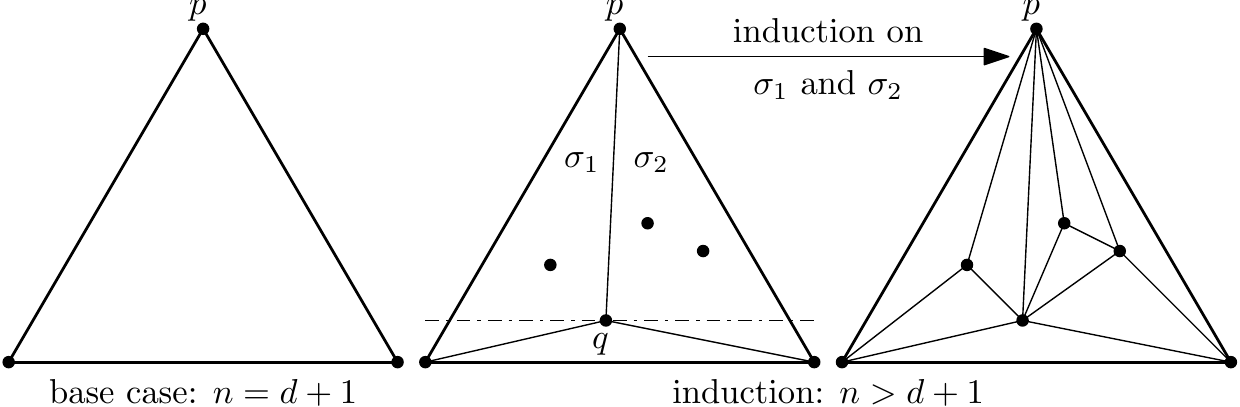}
  \caption{Illustration of the proof of Lemma~\ref{lem:starsimplex3d}
    for $n=7$ and $d=2$.}
  \label{fig:CH_simplex_2D}
\end{figure}

\begin{proof}
  We use induction on $n$, see Figure~\ref{fig:CH_simplex_2D} for an
  illustration. Start with a triangulation $\mathcal{T}$ consisting
  only of the $d$-simplex $\Conv(S)$. If $n=(d+1)$, $\mathcal{T}$ is a
  triangulation with $(d-1)n-d^2+2 = (d-1)(d+1)-d^2+2 = 1$ empty
  simplex containing $p$ as vertex.

  Assume $n>d+1$. Let $q$ be the interior point of $S$ closest to the
  only face of $\Conv(S)$ not incident to $p$.
  (If there exist more then one such closest points, then choose an
  arbitrary one of them as $q$.)
  Insert $q$ into $\mathcal{T}$. This results in a triangulation of
  size $(d+1)$ in which $d$ of its $d$-simplices,
  $\sigma_1\ldots\sigma_d$, have $p$ as a vertex.  Note that the
  remaining $d$-simplex does not contain any point of $S$ in its
  interior.
  We apply induction on $\sigma_1\ldots\sigma_d$. Let $n_i$ $(1\leq i
  \leq d)$ be the number of points of $S$ interior to $\sigma_i$,
  $\sum_{i=1}^d n_i = n - (d+1) - 1$.
  For each $\sigma_i$ we obtain a triangulation such that
  $(d-1)(n_i+(d+1))-d^2+2$ of its $d$-simplices have $p$ as a
  vertex. The union of the triangulations of each $\sigma_i$ is a
  triangulation of $S$, and $\sum_{i=1}^d ((d-1)n_i+(d-1)(d+1)-d^2+2)
  = (d-1)\sum_{i=1}^d (n_i) + d = (d-1)n-d^2+2$ of its $d$-simplices
  have $p$ as a vertex.
\end{proof}

The next three lemmas give, for every point of a general point set in
$\mathbb{R}^d$, a lower bound on the number of interior disjoint
$d$-simplices incident to $p$, for the cases $d=2$, $d=3$, and $d>3$,
respectively.

\begin{lemma}\label{lem:starsimplex2d}
  Let $S$ be a set of $n \geq 3$ points in general position in
  $\mathbb{R}^2$. For every point $p$ of $S$ there exists a
  $2$-dimensional simplicial complex of size at least $(n-2)$ and such
  that all of its triangles have $p$ as a vertex.
\end{lemma}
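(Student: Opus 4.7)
The plan is to exhibit the desired simplicial complex explicitly as a fan of triangles emanating from $p$. I would begin by sorting the remaining $n-1$ points of $S\setminus\{p\}$ in cyclic angular order around $p$; since $S$ is in general position, no two of these points lie on a common ray from $p$, so the ordering $q_1,q_2,\ldots,q_{n-1}$ is well defined, as are the triangles $\Conv(\{p,q_i,q_{i+1}\})$.

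I would then distinguish two cases according to whether $p$ lies on $\CH(S)$ or in the interior of $\Conv(S)$. If $p$ is a convex hull vertex, the remaining points all lie in an open half-plane bounded by some supporting line of $\Conv(S)$ through $p$, so the $q_i$ occupy an angular range strictly less than $\pi$ around $p$. In this case I would take the $n-2$ triangles $\Conv(\{p,q_i,q_{i+1}\})$ for $i=1,\ldots,n-2$, together with all their faces. If $p$ is an interior point of $\Conv(S)$, the $q_i$ surround $p$ completely, and I would instead take the $n-1$ cyclically consecutive triangles $\Conv(\{p,q_i,q_{i+1}\})$ with indices modulo $n-1$ (again together with their faces), which yields at least $n-2$ of them.

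It remains to verify the simplicial complex axioms. Each triangle lies in the closed angular sector between the rays from $p$ through $q_i$ and $q_{i+1}$, and these sectors are pairwise interior-disjoint by construction. Hence two consecutive triangles intersect exactly in the edge $\Conv(\{p,q_{i+1}\})$, while any two non-consecutive triangles intersect only in the vertex $p$; in particular every pairwise intersection is a common face of both. Since every face of every triangle has been explicitly included, the family is a $2$-dimensional simplicial complex of size at least $n-2$, and by construction each of its triangles has $p$ as a vertex.

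I do not foresee any real obstacle; the only subtlety, that two points of $S\setminus\{p\}$ might be collinear with $p$ (which would produce a degenerate triangle or break the angular order into a cyclic sequence), is precisely what the general position hypothesis rules out.
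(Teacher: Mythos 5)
Your proof is correct and follows essentially the same route as the paper: a cyclic angular ordering around $p$ and a fan of triangles on consecutive pairs, keeping only those pairs whose angular gap is less than $\pi$ (your case distinction on whether $p$ is a hull vertex just makes explicit where the single possible bad gap lies). The paper's version is terser but identical in substance, so nothing further is needed.
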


\begin{proof}
  Do a cyclic ordering around $p$ of the points of
  \mbox{$S\!\setminus\!\!\{p\}$}. Construct a $2$-dimensional simplicial
  complex by forming a triangle with $p$ and every two consecutive
  elements determining an angle less than $\pi$. This simplicial
  complex has at least $n-2$ triangles and they all contain $p$ as a
  vertex.
\end{proof}

\begin{lemma}\label{lem:starp3d}
  Let $S$ be a set of $n \geq 4$ points in general position in
  $\mathbb{R}^3$. For every point $p$ of $S$ there exists a
  triangulation of $S$ such that at least:
  \begin{itemize}
  \item $2n-6$ of its $3$-simplices have $p$ as a vertex, if $p$ is
    an interior point of $S$.
  \item $2n-\varrho(p)-4$ of its $3$-simplices contain $p$ as a
    vertex, if $p$ is a convex hull point of $S$ and $\varrho(p)$ is
    its degree in the $1$-skeleton of $\CH(S)$.
  \end{itemize}
\end{lemma}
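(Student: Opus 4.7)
The plan is to build the required triangulation in two phases: first a ``pulling'' triangulation anchored at $p$, then a carefully ordered sequence of point insertions, each contributing two new $p$-incident $3$-simplices.

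For each $q\in\Conv(S)\setminus\{p\}$ define the radial parameter $\lambda(q):=|q-p|/|x_q-p|$, where $x_q$ is the (by general position unique) exit point of the ray from $p$ through $q$ on $\CH(S)$. The crucial feature is that inside the open cone from $p$ through any single $2$-face $f$ of $\CH(S)$, supported by the plane $\vec n\!\cdot\!y=c$, one has $\lambda(x)=\vec n\!\cdot\!(x-p)/(c-\vec n\!\cdot\!p)$, so $\lambda$ is an affine function on this cone; in particular, on any tetrahedron contained in such a cone, $\lambda$ attains its minimum at a vertex.

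\emph{Phase one.} Form a triangulation $\mathcal{T}_0$ of $\Conv(S)$ by pulling $p$ to every face of $\CH(S)$ that does not contain $p$. If $p$ is interior, all $2h-4$ faces qualify and we obtain $2h-4$ tetrahedra at $p$; if $p\in\CH(S)$, then $p$ is a vertex of exactly $\varrho(p)$ triangles of $\CH(S)$, leaving $(2h-4)-\varrho(p)$ triangles that produce tetrahedra at $p$. In either case every point of $\Conv(S)$ lies on a segment from $p$ to a point of a $p$-free face of $\CH(S)$, so $\mathcal{T}_0$ tiles $\Conv(S)$; every hull vertex of $S$ becomes adjacent to $p$ in $\mathcal{T}_0$ (using $\varrho(v)\geq 3$ for any vertex $v$ of a simplicial $2$-sphere); and each tetrahedron sits inside the cone over its ``far'' face, so $\lambda$ is affine on it.

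\emph{Phase two.} Insert the remaining interior points of $S$ one by one, in strictly decreasing order of $\lambda$. I will maintain the invariant that every non-$p$-tetrahedron $\tau$ in the current triangulation has the form $q_i\cdot f'$, where $q_i$ is the point whose insertion produced $\tau$ and $f'$ is the face opposite $p$ in the $p$-tetrahedron $p\cdot f'$ into which $q_i$ fell. The other three vertices of $\tau$ are hull vertices (with $\lambda=1$) or strictly earlier insertion points (with $\lambda$ larger than $\lambda(q_i)$ by the ordering), so by affinity $\min_{\tau}\lambda=\lambda(q_i)$. When the next point $q_j$ is inserted, $\lambda(q_j)<\lambda(q_i)$ for every earlier $q_i$, so affinity forbids $q_j$ from lying in any existing non-$p$-tetrahedron; therefore $q_j$ falls inside a $p$-tetrahedron $p\cdot f''$. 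The standard split produces three new $p$-tetrahedra plus a non-$p$-tetrahedron $q_j\cdot f''$, a net gain of $+2$ in the count of $p$-tetrahedra, and the new non-$p$-tetrahedron respects the invariant by the same argument.

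The main obstacle is precisely the propagation of the affinity of $\lambda$ through repeated subdivisions: the $p$-tetrahedra produced by earlier splits still sit inside the cone over a single fixed face of $\CH(S)$, so $\lambda$ remains affine on them and the invariant keeps going. Summing yields $(2h-4)+2(n-h-1)=2n-6$ tetrahedra incident to $p$ when $p$ is an interior point, and $(2h-4-\varrho(p))+2(n-h)=2n-\varrho(p)-4$ when $p$ is a convex hull point of $S$, as claimed.
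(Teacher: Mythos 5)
Your proof is correct, and its first phase (the pulling triangulation of the hull vertices together with $p$, giving $2h-4$, respectively $2h-4-\varrho(p)$, tetrahedra at $p$) is exactly the paper's. Where you diverge is in how the interior points are handled: the paper applies Lemma~\ref{lem:starsimplex3d} separately inside each tetrahedron of the pulling triangulation, and that lemma is itself a local recursion that always inserts next the interior point closest to the face opposite $p$, so that the single non-$p$-tetrahedron each insertion creates is empty of all further points. You replace this by one global insertion order governed by the radial parameter $\lambda$, with the invariant that every non-$p$-tetrahedron attains its minimum of $\lambda$ at the point whose insertion created it; affinity of $\lambda$ on the cone over each hull facet then keeps later (radially closer to $p$) points out of all non-$p$-tetrahedra. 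The two mechanisms implement the same ``outside-in'' idea and yield the same net gain of two $p$-incident tetrahedra per interior point, so the counts agree; your version is self-contained and avoids the auxiliary lemma, at the cost of the extra bookkeeping about cones and the global order. One small point to tighten: two interior points can share the same value of $\lambda$ (general position does not exclude this), so ``strictly decreasing order'' should read ``non-increasing order''; ties are then harmless, since a later point lying in the $\lambda$-minimizing face of an existing non-$p$-tetrahedron would force three collinear or four coplanar points of $S$, contradicting general position.
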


\begin{proof}
  Let $S'$ be the set of convex hull points of $S$ and
  $n'=|S'|$. Construct a pulling triangulation $\mathcal{T}'$
  w.r.t. $p$ of $S'\cup \{p\}$. By definition all $3$-simplices of
  $\mathcal{T}'$ contain $p$ as a vertex. For every $3$-simplex
  $\sigma$ of $\mathcal{T}'$, let $\eta$ be the number of points
  of $S$ interior to $\sigma$.
  By applying Lemma~\ref{lem:starsimplex3d} we can triangulate
  $\sigma$, such that $2(\eta+4)-7 = 2\eta+1$ of its
  $3$-simplices have $p$ as a vertex. Repeat this for every
  $3$-simplex of $\mathcal{T}'$, to obtain a triangulation
  $\mathcal{T}$ of $S$.

  By Theorem~\ref{thm:lowerbound}, $\CH(S)$ has (at least) $2n'-4$
  faces (for $d=3$ this lower bound is tight).
  \begin{itemize}
  \item If $p$ is an interior point of $S$, $\mathcal{T}'$ contains a
    $3$-simplex for every face of $\CH(S)$. Therefore, summing over
    all these faces we get $\sum (2\eta+1) = 2\sum
    (\eta)+2n'-4 = 2(n-n'-1)+2n'-4 = 2n-6$ of the $3$-simplices in
    $\mathcal{T}$ have $p$ as a vertex.
  \item If $p$ is a convex hull point of $S$, $\mathcal{T}'$ contains
    a $3$-simplex for every face of $\CH(S)$ not having $p$ as a
    vertex. This is equal to $2n'-4-\varrho(p)$, where $\varrho(p)$ is the degree
    of $p$ in the $1$-skeleton of $\CH(S)$.
    Therefore, $\sum (2\eta+1) = 2\sum (\eta)+2n'-4-\varrho(p)
    = 2(n-n')+2n'-4-\varrho(p) = 2n-\varrho(p)-4$ of the $3$-simplices
    in $\mathcal{T}$ have $p$ as a vertex.
  \end{itemize}\vspace*{-7mm}
\end{proof}

\begin{lemma}\label{lem:starp}
  Let $S$ be a set of $n>4^{d^2(d+1)}$ points in general position in
  $\mathbb{R}^d$ $(d>3)$. For every point $p$ of $S$, there exists a
  $d$-dimensional simplicial complex $\mathcal{K}$ with vertex set
  $S$, such that $\mathcal{K}$ has size strictly larger than
  $(d-1)n+\frac{\log_2 n}{2(d-1)}-2c_{d-1}$ and all its $d$-simplices
  have $p$ as a vertex, with $c_d$ defined as in
  Lemma~\ref{lem:convtriang}.
\end{lemma}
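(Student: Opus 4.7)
The plan is to reduce the problem to a triangulation in dimension $d-1$ via \emph{central projection from $p$}, and then ``cone back''. This works cleanly when $p$ is a convex hull point; the interior case is handled by splitting. Assume first that $p$ lies on $\CH(S)$: pick a hyperplane $\Pi$ strictly separating $p$ from $S\setminus\{p\}$ and, for each $q\in S\setminus\{p\}$, let $q^*$ be the intersection of the line $pq$ with $\Pi$. General position of $S$ in $\mathbb{R}^d$ forces the $n-1$ projections to be distinct (no three of $S$ are collinear) and in general position in $\Pi\cong\mathbb{R}^{d-1}$: any affine dependence among $d$ of the $q^*$ would place $p$ and the corresponding $d$ points of $S$ on a common $(d-1)$-hyperplane of $\mathbb{R}^d$. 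Since $n>4^{d^2(d+1)}$ implies $n-1>4^{(d-1)^2 d}$, Theorem~\ref{thm:dn+log} in dimension $d-1$ yields a triangulation $\mathcal{T}^*$ of the projections with $|\mathcal{T}^*|\geq (d-1)(n-1)+\frac{\log_2(n-1)}{2(d-1)}-c_{d-1}$.

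For each $(d-1)$-simplex $\Conv(q_1^*,\dots,q_d^*)\in\mathcal{T}^*$, form the $d$-simplex $\Conv(p,q_1,\dots,q_d)$ and let $\mathcal{K}$ be the family of these $d$-simplices together with all their faces. Since each $q_i$ lies on the ray from $p$ through $q_i^*$, a short calculation in the barycentric coordinates of $\{p,q_1,\dots,q_d\}$ shows that $\Conv(p,q_1,\dots,q_d)$ equals the bounded portion $\{p+\sum\mu_i(q_i-p):\mu_i\geq 0,\ \sum\mu_i\leq 1\}$ of the infinite angular cone from $p$ over $\Conv(q_1^*,\dots,q_d^*)$. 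Two such cones meet only in the cone from $p$ over the (possibly empty) shared base face in $\mathcal{T}^*$, so the corresponding $d$-simplices of $\mathcal{K}$ meet only in a common face containing $p$. Hence $\mathcal{K}$ is a simplicial complex with vertex set $S$ in which every $d$-simplex has $p$ as a vertex, and $|\mathcal{K}|=|\mathcal{T}^*|$ already exceeds $(d-1)n+\frac{\log_2 n}{2(d-1)}-2c_{d-1}$, since $c_{d-1}-(d-1)=(d-1)^3+(d-1)^2$ dominates the negligible $\frac{\log_2(n/(n-1))}{2(d-1)}$.

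If $p$ is interior to $S$, pick a generic hyperplane $H$ through $p$ that splits $S\setminus\{p\}$ into halves $S^+,S^-$ with $|S^\pm|\geq\lfloor(n-1)/2\rfloor>4^{(d-1)^2 d}$ (achievable by rotating a separating direction through $p$). Each half sees $p$ as a convex hull point of $S^\pm\cup\{p\}$, so the preceding construction produces simplicial complexes $\mathcal{K}^\pm$ in the two closed half-spaces bounded by $H$, meeting only at the vertex $\{p\}$. Their union $\mathcal{K}$ is therefore a simplicial complex with the desired properties, of size at least $(d-1)(n-1)+\frac{\log_2(|S^+|\cdot|S^-|)}{2(d-1)}-2c_{d-1}$; combining $\log_2(|S^+|\cdot|S^-|)\geq 2\log_2((n-1)/2)$ with $\log_2 n>2(d-1)^2$ (both comfortable under $n>4^{d^2(d+1)}$) gives the claimed bound. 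The main technical obstacle is the barycentric verification that the coning step still yields a valid simplicial complex when the vertices $q_i$ lie at different radial distances from $p$; this is also precisely what forces the splitting in the interior case, since a single-hyperplane projection would place some $q_i$ on the opposite side of $p$ and the lifted $d$-simplex would no longer sit inside its intended angular cone.
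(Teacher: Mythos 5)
Your proof is correct and follows essentially the same route as the paper's: central projection from $p$ onto one or two hyperplanes, an application of Theorem~\ref{thm:dn+log} in dimension $d-1$, and coning the lifted $(d-1)$-simplices back to $p$. The only (cosmetic) difference is your case split on whether $p$ is extremal --- the paper handles both cases uniformly by always taking a halving hyperplane through $p$ and two parallel projection hyperplanes, exactly as in your interior case --- and your arithmetic has ample slack to absorb the negligible floor/ceiling discrepancy in $\log_2(|S^+|\cdot|S^-|)$.
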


\begin{proof}
  For every point $q \in S$ distinct from $p$ let $r_{q}$ be the
  infinite ray with origin $p$ and passing through $q$. Let $\Pi$ be a
  halving $(d-1)$-dimensional hyperplane of $S$ passing through $p$,
  not containing any other point of $S$.
  Further, let $\Pi_1$ and $\Pi_2$ be two $(d-1)$-dimensional
  hyperplanes parallel to $\Pi$ containing $\Conv(S)$ between them and
  not parallel to any of the rays $r_q$.

  Project from $p$ every point in \mbox{$S\!\setminus\!\!\{p\}$} to
  $\Pi_1$ or $\Pi_2$, in the following way. Every ray $r_q$ intersects
  either $\Pi_1$ or $\Pi_2$ in a point $q'$. Take $q'$ to be the
  projection of $q$ from $p$. Let $S_1'$ and $S_2'$ be these projected
  points in $\Pi_1$ and $\Pi_2$, respectively.
  Both, $S_1'$ and $S_2'$, are sets of points in general position in
  $\mathbb{R}^{d-1}$, with
  $|S_1'|=n_1=\left\lfloor\frac{n-1}{2}\right\rfloor$ and
  $|S_2'|=n_2=\left\lceil\frac{n-1}{2}\right\rceil$, where both,
  $n_1$ and $n_2$, are strictly larger than $4^{(d-1)^2d}$.

  By Theorem~\ref{thm:dn+log}, there exist triangulations
  $\mathcal{T}_1$ of $S_1'$ and $\mathcal{T}_2$ of $S_2'$ of size at least
  $(d-1)n_1+\frac{\log_2(n_1)}{2(d-1)}-c_{d-1}$  and
  $(d-1)n_2+\frac{\log_2(n_2)}{2(d-1)}-c_{d-1}$, respectively.
  Consider the simplicial complexes $\mathcal{K}_1$ and
  $\mathcal{K}_2$ that arise from replacing every point $q'$ in a
  simplex of $\mathcal{T}_1$ or $\mathcal{T}_2$ with its preimage $q$
  in $S\setminus \{p\}$. The $(d-1)$-simplices of $\mathcal{K}_1$ and
  $\mathcal{K}_2$ are all visible from $p$. Hence, we obtain a
  simplicial complex $\mathcal{K}$ of dimension~$d$, by taking the
  convex hull of $p$ and each $(d-1)$-simplex of $\mathcal{K}_1$ and
  $\mathcal{K}_2$.
  Obviously, all $d$-simplices of $\mathcal{K}$ contain $p$ as a
  vertex. The size of $\mathcal{K}$ is at least
  $(d-1)(n_1+n_2) + \frac{\log_2(n_1)+\log_2(n_2)}{2(d-1)}-2c_{d-1}
  = (d-1)n - (d-1) + \frac{\log_2(n_1n_2)}{2(d-1)} - 2c_{d-1} \geq \linebreak
  (d-1)n + \frac{\log_2(\frac{n(n-2)}{4})}{2(d-1)} - 2c_{d-1} -
  (d-1)
  = (d-1)n + \frac{\log_2(n)}{2(d-1)} - 2c_{d-1} +
  \frac{\log_2(n-2)-\log_2(4)}{2(d-1)} - (d-1)
  > (d-1)n + \frac{\log_2(n)}{2(d-1)} - 2c_{d-1} +
  \frac{2d^2(d+1)-1-2}{2(d-1)} - (d-1)
  > (d-1)n + \frac{\log_2(n)}{2(d-1)} - 2c_{d-1} + (d-1)^2$.
  This is strictly larger than
  $(d-1)n+\frac{\log_2(n)}{2(d-1)}-2c_{d-1}$.
\end{proof}

We now consider not only one point, but subsets $X$ of point sets in
$\mathbb{R}^d$ $(d>3)$. The next three lemmas, applicable for
$1\leq|X|\leq d-3$, $|X|=d-1$, and $|X|=d-2$, respectively, provide
lower bounds on the number of interior disjoint $d$-simplices which
all share the points in $X$. Note that the second lemma in the row,
Lemma~\ref{lem:starset2r}, is true for $d\geq3$.

\begin{lemma}\label{lem:starset}
  Let $S$ be a set of $n>4^{d^2(d+1)}$ points in general position in
  $\mathbb{R}^d$ $(d>3)$.
  For every set $X \subset S$ of $r$ points $(1\leq r\leq d-3)$, there
  exists a $d$-dimensional simplicial complex $\mathcal{K}$ with
  vertex set $S$, such that $\mathcal{K}$ has size strictly larger
  than $(d-r)n+\frac{\log_2 n}{2(d-r)}-2c_{d-1}$ and all its
  $d$-simplices have $X$ in their vertex set, with $c_d$ defined as in
  Lemma~\ref{lem:convtriang}.
\end{lemma}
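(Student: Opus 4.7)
The plan is to generalize the two-sided central-projection construction from Lemma~\ref{lem:starp} (the case $r=1$), replacing the projection from a single point $p$ with a projection from the $(r-1)$-dimensional affine flat $L := \mathrm{aff}(X)$ spanned by $X$. The idea is that every $q \in S \setminus X$ determines a unique $r$-flat $F_q := \mathrm{aff}(L \cup \{q\})$ through $L$ and $q$, and a generic $(d-r)$-dimensional affine subspace meets each such $F_q$ in a single point by the dimension identity $r + (d-r) = d$. This lets us reduce the problem to producing a large triangulation in dimension $d-r$, which is at least $3$ by the hypothesis $r \leq d-3$, so that Theorem~\ref{thm:dn+log} applies.

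Concretely, I would first choose a $(d-1)$-dimensional hyperplane $\Pi$ containing $L$, generic enough that no point of $S \setminus X$ lies on it and that the two open halfspaces bounded by $\Pi$ split $S \setminus X$ into sets $S_1, S_2$ of sizes $n_1 = \lfloor (n-r)/2 \rfloor$ and $n_2 = \lceil (n-r)/2 \rceil$. Next, I would pick two $(d-r)$-dimensional affine subspaces $H_1, H_2$, one on each side of $\Pi$, in generic position with respect to $L$ and $S$, and project each $q \in S_i$ to $q' := F_q \cap H_i$, obtaining $S_i' \subset H_i \cong \mathbb{R}^{d-r}$. Using general position of $S$ (any $d+1$ points are affinely independent), a short dimension count shows the projection is injective and lands in general position in $H_i$. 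Since $n_i \gg 4^{(d-r)^2(d-r+1)}$, Theorem~\ref{thm:dn+log} supplies triangulations $\mathcal{T}_i$ of $S_i'$ of size at least $(d-r)n_i + \frac{\log_2 n_i}{2(d-r)} - c_{d-r}$.

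For each $(d-r)$-simplex $\sigma' = \Conv(v'_1,\dots,v'_{d-r+1})$ of $\mathcal{T}_i$, I would lift it to $\tilde{\sigma} := \Conv(v_1,\dots,v_{d-r+1})$ (where $v_j$ is the unique preimage of $v'_j$) and form the $d$-simplex $\Conv(X \cup \tilde{\sigma})$, which contains $X$ in its vertex set. Collecting all such $d$-simplices into $\mathcal{K}$, one sums
\[
|\mathcal{K}| \geq (d-r)(n_1 + n_2) + \frac{\log_2(n_1 n_2)}{2(d-r)} - 2 c_{d-r}.
\]
With $n_1 + n_2 = n - r$, the crude estimate $n_1 n_2 \geq n^2/5$ for $n$ sufficiently large, and the easy polynomial inequality $2(c_{d-1} - c_{d-r}) \geq (d-r)r$ for $d \geq 4$ and $1 \leq r \leq d-3$ (verified by expanding $c_d = d^3+d^2+d$), this gives the strict bound $|\mathcal{K}| > (d-r) n + \frac{\log_2 n}{2(d-r)} - 2 c_{d-1}$ claimed in the lemma.

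The main obstacle will be verifying that $\mathcal{K}$ is a genuine simplicial complex. The key observation is that the central projection $\pi_i$ (sending $y \notin L$ to $F_y \cap H_i$) maps $\Conv(X \cup \tilde{\sigma}) \setminus L$ onto $\sigma'$: given $y \in \Conv(X \cup \tilde{\sigma}) \setminus L$, its (unique) barycentric decomposition yields $y = (1-\beta) x + \beta v$ with $x \in L$, $v \in \tilde{\sigma}$, $\beta > 0$, so $F_y = F_v$ and hence $\pi_i(y) = \pi_i(v)$; an argument analogous to the one in Lemma~\ref{lem:starp} (using that all $v_k$ lie on the same side of $\Pi$, so the relevant orientations agree) shows that $\pi_i(v)$ is a convex combination of the $v'_j$'s and therefore lies in $\sigma'$. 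Consequently, if $y$ lies in two coned $d$-simplices from the same side of $\Pi$, then $\pi_i(y)$ lies in the intersection of the corresponding $\sigma'$'s, which is a common face of $\mathcal{T}_i$; thus $y$ lies in the cone from $X$ over that face, a common face of both $d$-simplices. Simplices coming from opposite sides of $\Pi$ can intersect only in $\Conv(X)$, which is a face of every member of $\mathcal{K}$.
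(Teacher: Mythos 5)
Your construction is correct and is, in substance, the paper's own construction with its two projection steps merged into one. The paper first projects $S$ orthogonally along $\mathrm{aff}(X)$ onto a $(d-r+1)$-dimensional subspace, collapsing $X$ to a single point $p_X$, and then invokes Lemma~\ref{lem:starp} for that point (which internally performs the two-sided central projection onto $(d-r)$-dimensional hyperplanes and applies Theorem~\ref{thm:dn+log} to each half); you instead project centrally from the flat $\mathrm{aff}(X)$ directly onto two $(d-r)$-dimensional subspaces and apply Theorem~\ref{thm:dn+log} yourself. The composite maps coincide, so the resulting complexes are the same; what the paper's factorization buys is that the simplicial-complex verification and the bookkeeping are done once (inside Lemma~\ref{lem:starp}) and inherited for free, whereas you must redo them at the level of flats --- which you do correctly, and in fact more explicitly than the paper, which is silent on why the lifted family is a simplicial complex.

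One slip in your accounting: the ``easy polynomial inequality'' $2(c_{d-1}-c_{d-r})\geq (d-r)r$ is false for $r=1$ (the left side is $0$, the right side is $d-1$), and it is only for $r\geq 2$ that expanding $c_d=d^3+d^2+d$ gives what you claim. This does not break the proof: for $r=1$ your construction literally reproduces the proof of Lemma~\ref{lem:starp} (to which the paper simply defers in that case), and in any event the logarithmic surplus you already carry, $\frac{\log_2(n_1n_2)-\log_2 n}{2(d-r)}\geq\frac{\log_2(n/5)}{2(d-r)}>\frac{2d^2(d+1)-3}{2(d-1)}>d^2\geq (d-r)r$ under the hypothesis $n>4^{d^2(d+1)}$, covers the deficit for every $1\leq r\leq d-3$ on its own. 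You should either exclude $r=1$ from the polynomial inequality and cite Lemma~\ref{lem:starp} for that case, or attribute the slack to the logarithmic term rather than to the constants $c_{d-1}-c_{d-r}$.
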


\begin{proof}
  The case $r=1$ is shown in Lemma~\ref{lem:starp}.
  Thus assume that $r>1$.
  Let $\Pi$ be the $(r-1)$-dimensional hyperplane containing $X$ and
  let $\Pi'$ be a $(d-(r-1))$-dimensional hyperplane orthogonal to
  $\Pi$.
  Project $S$ orthogonally to $\Pi'$, and let $S'$ be the resulting
  image. The set $X$ is projected to a single point $p_X$ in
  $\Pi'$. Obviously $|S'| = n-r+1 > 4^{(d-r+1)^2(d-r+2)}$. Apply
  Lemma~\ref{lem:starp} to $S'$, and obtain a $(d-r+1)$-dimensional
  simplicial complex $\mathcal{K}'$ with vertex set $S'$ of size at
  least $(d-r)(n-r+1)+\frac{\log_2 (n-r+1)}{2(d-r)}-2c_{d-r} =
  (d-r)n+\frac{\log_2 n}{2(d-r)}-2c_{d-r}-(d-r)(r-1)+\frac{\log_2
    (1-\frac{r-1}{n})}{2(d-r)} >
  (d-r)n+\frac{\log_2 n}{2(d-r)}-2c_{d-1}$, such that all the
  $(d-r+1)$-simplices of $\mathcal{K}'$ have $p_X$ as a vertex.

  To get $\mathcal{K}$ from $\mathcal{K}'$, lift each simplex of
  $\mathcal{K}'$ to the convex hull of the preimage of its vertex
  set. Thus $\mathcal{K}$ is a $d$-dimensional simplicial complex with
  vertex set $S$ and size larger than $(d-r)n+\frac{\log_2
    n}{2(d-r)}-2c_{d-1}$. As all $(d-r+1)$-simplices of $\mathcal{K}'$
  have $p_X$ as a vertex, each $d$-simplex of $\mathcal{K}$ has $X$ as
  a vertex subset.
\end{proof}

\begin{lemma}\label{lem:starset2r}
  Let $S$ be a set of $n > d$ points in general position in
  $\mathbb{R}^d$ $(d\geq3)$. For every set $X \subset S$ of $d-1$ points,
  there exists a $d$-dimensional simplicial complex $\mathcal{K}$ with
  vertex set $S$, such that $\mathcal{K}$ has size at least $n-d$, and
  all $d$-simplices of $\mathcal{K}$ have $X$ as a vertex.
\end{lemma}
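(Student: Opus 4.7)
The plan is to project $\mathbb{R}^d$ orthogonally onto a $2$-plane along the affine hull $H_X$ of $X$, apply the planar star-construction of Lemma~\ref{lem:starsimplex2d} around the image $p_X$ of $X$, and lift back to $\mathbb{R}^d$. Concretely, let $\pi\colon\mathbb{R}^d\to\mathbb{R}^2$ be the orthogonal projection whose kernel is the direction subspace of the $(d-2)$-flat $H_X$, so that $\pi$ collapses $H_X$ to a single point $p_X$, and set $S':=\pi(S\setminus X)$. First I would extract three consequences of the general position of~$S$: (i)~no point of $S\setminus X$ lies in $H_X$; (ii)~$\pi$ is injective on $S\setminus X$; (iii)~no two points of $S'$ are collinear with~$p_X$. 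Each failure would exhibit $d+1$ points of $S$ in a common $(d-1)$-flat spanned by $H_X$ together with one extra point of $S$ (which exists in case~(i) since $n>d$), contradicting general position. Fact~(iii) in particular ensures that the points of $S'$ have pairwise distinct angular directions as seen from~$p_X$.

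Next, following the construction in the proof of Lemma~\ref{lem:starsimplex2d}, I would cyclically sort $S'$ around $p_X$ and include the triangle $\Conv(\{p_X,q'_i,q'_{i+1}\})$ for every angularly consecutive pair with gap strictly smaller than~$\pi$. This yields a planar simplicial complex $\mathcal{K}'$ with at least $|S'\cup\{p_X\}|-2=n-d$ triangles, all incident to~$p_X$. I would then lift each such triangle to the $d$-simplex $\sigma_i:=\Conv(X\cup\{q_i,q_{i+1}\})$, where $q_j\in S\setminus X$ denotes the unique preimage of $q'_j$ under~$\pi$, include all its faces, and finally append the trivial $0$-simplices $\{s\}$ for every $s\in S$ not otherwise represented, so that the vertex set of the resulting complex $\mathcal{K}$ is exactly~$S$. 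By the affine independence of $X\cup\{q_i,q_{i+1}\}$ each $\sigma_i$ is a genuine $d$-simplex containing $X$ in its vertex set, and there are at least $n-d$ of them.

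The main obstacle is verifying that $\mathcal{K}$ is a bona fide simplicial complex, i.e.\ that any two of its $d$-simplices $\sigma_i,\sigma_j$ meet in a common face. Their $\pi$-images meet either in $\{p_X\}$ alone or in an edge $\{p_X,q'_k\}$ corresponding to a shared angular neighbour~$q_k$. Given $p\in\sigma_i\cap\sigma_j$, I would write $p$ as a convex combination of the vertices of $\sigma_i$ and apply~$\pi$; the linear independence of $\pi(q_i)-p_X$ and $\pi(q_{i+1})-p_X$ (which is exactly fact~(iii) applied to $q_i,q_{i+1}$) then forces the barycentric coordinates at the vertices outside $X\cup\{q_k\}$ to vanish, giving $\sigma_i\cap\sigma_j=\Conv(X)$ or $\Conv(X\cup\{q_k\})$, a common $(d-2)$- or $(d-1)$-face. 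This is the step where the particular choice of projection direction is used essentially, as it is what allows reading off the $d$-dimensional intersection from its planar image.
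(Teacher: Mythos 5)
Your proposal is correct and follows essentially the same route as the paper's proof: project orthogonally onto a $2$-dimensional hyperplane orthogonal to the affine hull of $X$ (collapsing $X$ to a point $p_X$), apply Lemma~\ref{lem:starsimplex2d} to the projected set, and lift each triangle back to a $d$-simplex containing $X$. You additionally spell out the general-position and intersection checks that the paper leaves implicit, but the construction and the count $n-d$ are identical.
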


\begin{figure}[htb]
  \centering
  \includegraphics{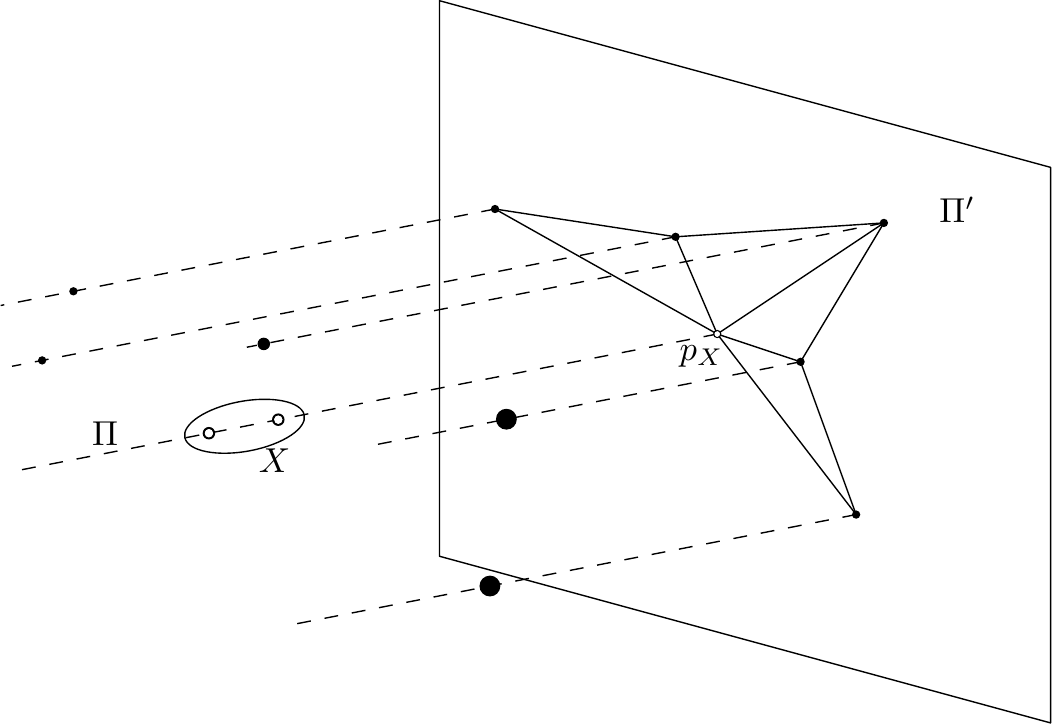}
  \caption{Illustration of the proof of Lemma~\ref{lem:starset2r} for
    $n=7$ and $d=3$.}
  \label{fig:demproy}
\end{figure}

\begin{proof}
  The proof is similar to that of Lemma~\ref{lem:starset}, with the
  difference that we cannot apply Lemma~\ref{lem:starp}.

  Let $\Pi$ be the $(d-2)$-dimensional hyperplane containing $X$ and
  let $\Pi'$ be a $2$-dimensional hyperplane orthogonal to $\Pi$.
  Project $S$ orthogonally to $\Pi'$, and let $S'$ be its image. The
  set $X$ is projected to a single point $p_X$ of $\Pi'$ (see
  Figure~\ref{fig:demproy}). Obviously $|S'| = n-d+2 \geq 3$.
  Apply Lemma~\ref{lem:starsimplex2d} to $S'$, and obtain a
  $2$-dimensional simplicial complex $\mathcal{K}'$ with vertex set
  $S'$ of size at least $(n-d+2)-2 = n-d$, such that all triangles of
  $\mathcal{K}'$ have $p_X$ as a vertex.

  To get $\mathcal{K}$ from $\mathcal{K}'$, lift each triangle of
  $\mathcal{K}'$ to the convex hull of the preimage of its vertex
  set. Thus $\mathcal{K}$ is a $d$-dimensional simplicial complex with
  vertex set $S$ and size $n-d$. Since all triangles of $\mathcal{K}'$
  have $p_X$ as a vertex, all $d$-simplices of $\mathcal{K}$ have $X$
  as a vertex subset.
\end{proof}

Note that Lemma~\ref{lem:starset} and Lemma~\ref{lem:starset2r} leave
a gap for $r=d-2$. In this case, the point set is projected to a
3-dimensional hyperplane, where the guaranteed bounds on incident
3-simplices vary significantly for extremal and interior points, see
Lemma~\ref{lem:starp3d}. Thus we make a weaker statement for this
case, which will turn out to be sufficient anyhow.

\begin{lemma}\label{lem:starset_d-2}
  Let $S$ be a set of $n>d+5$ points in general position in
  $\mathbb{R}^d$ $(d>3)$.
  Let \mbox{$X \subset S$} be a subset of \mbox{$d\!-\!2$}
  points. Denote with $\Pi$ the \mbox{$(d\!-\!3)$-dimensional}
  hyperplane containing $X$ and with $\Pi'$ a $3$-dimensional
  hyperplane orthogonal to $\Pi$.
  Project $S$ orthogonally to $\Pi'$, and let $S'$ be the resulting
  image. The set $X$ is projected to a single point $p_X$ in
  $\Pi'$.

  If $p_X$ is an interior point of $S'$, then there exists a
  $d$-dimensional simplicial complex $\mathcal{K}$ with vertex set
  $S$, such that $\mathcal{K}$ is of size at least $2n-2d-8$ and all
  $d$-simplices of $\mathcal{K}$ have $X$ in their vertex set.
\end{lemma}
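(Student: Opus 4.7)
The plan is to follow the template of Lemma~\ref{lem:starset2r}, but to project down one dimension less (to a $3$-dimensional hyperplane instead of a $2$-dimensional one) and use Lemma~\ref{lem:starp3d} in place of Lemma~\ref{lem:starsimplex2d}. The role of the hypothesis ``$p_X$ is an interior point of $S'$'' is precisely to allow invoking the stronger, first case of Lemma~\ref{lem:starp3d}; without it, the analogous argument would lose the extra $\varrho(p_X)$ term and yield a weaker bound.

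First, I would record the projection setup inherited from the statement. The projection collapses $X$ to the single point $p_X \in \Pi'$, while points of $S \setminus X$ project to distinct points of $\Pi'$ for a generic choice of $\Pi'$ (and can be perturbed if needed, as in the preceding lemmas), so $|S'| = n - (d-2) + 1 = n-d+3 \geq 4$, and $S'$ is in general position in $\Pi' \cong \mathbb{R}^3$. Since $p_X$ is assumed to lie in the interior of $S'$, the first case of Lemma~\ref{lem:starp3d} applies and yields a triangulation $\mathcal{T}'$ of $S'$ in which at least $2|S'| - 6 = 2n - 2d$ of the $3$-simplices have $p_X$ as a vertex.

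Next, I would lift $\mathcal{T}'$ back to $\mathbb{R}^d$ exactly as in the proof of Lemma~\ref{lem:starset2r}. Every $3$-simplex of $\mathcal{T}'$ incident to $p_X$ has the form $\Conv(\{p_X, q_1', q_2', q_3'\})$ with $q_i' \in S' \setminus \{p_X\}$; replace each vertex by its preimage in $S$, i.e., replace $p_X$ by $X$ and each $q_i'$ by the corresponding $q_i \in S \setminus X$. General position of $S$ guarantees that the resulting $d+1$ points $X \cup \{q_1, q_2, q_3\}$ are affinely independent, so their convex hull is a genuine $d$-simplex containing $X$ in its vertex set. Taking $\mathcal{K}$ to be the collection of these lifted $d$-simplices, together with all their faces (and the remaining points of $S$ as isolated vertices), produces a simplicial complex because two lifted simplices that intersect must project to two $3$-simplices of $\mathcal{T}'$ sharing a common face $\tau'$ containing $p_X$; the preimage of $\tau'$ is then a common face of both lifts.

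The main technical point---and the reason for the slightly weaker constant $-2d - 8$ instead of $-2d$ in the statement---is accommodating any mild degeneracies in the projection: at most a constant (dimension-dependent) number of pairs of points in $S \setminus X$ may project to coincident images in $\Pi'$, and handling them by a small perturbation or by discarding a constant number of offending simplices costs at most the additive slack of $8$ (for $d > 3$, with $n > d+5$ the hypotheses of Lemma~\ref{lem:starp3d} are satisfied). I do not foresee any genuine obstacle, since the argument is structurally identical to that of Lemma~\ref{lem:starset2r}; the only new ingredient is the need to restrict to the ``$p_X$ interior'' case of Lemma~\ref{lem:starp3d}, which is exactly the extra hypothesis in the statement.
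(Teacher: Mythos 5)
Your proof is correct and follows essentially the same route as the paper: project to $\Pi'$, apply the interior-point case of Lemma~\ref{lem:starp3d} to $S'$, and lift the $3$-simplices incident to $p_X$ back to $d$-simplices containing $X$. Your count $|S'|=n-d+3$ is in fact the right one (the paper writes $|S'|=n-d-1$, whence its constant $-2d-8$), so your closing speculation about degeneracies absorbing an additive $8$ is unnecessary: your bound of $2n-2d$ simply implies the stated $2n-2d-8$ a fortiori.
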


\begin{proof}
  Obviously $|S'| = n-d-1 > 4$. As $p_X$ is assumed to be an interior
  point of $S'$, apply Lemma~\ref{lem:starp3d} to $S'$, and obtain a
  $3$-dimensional simplicial complex $\mathcal{K}'$ with vertex set
  $S'$ of size at least $2(n-d-1)-6 = 2n-2d-8$, such that all the
  $3$-simplices of $\mathcal{K}'$ have $p_X$ as a vertex.

  To get $\mathcal{K}$ from $\mathcal{K}'$, lift each $3$-simplex of
  $\mathcal{K}'$ to the convex hull of the preimage of its vertex
  set. Thus $\mathcal{K}$ is a $d$-dimensional simplicial complex with
  vertex set $S$ and size at least $2n-2d-8$. As all $3$-simplices of
  $\mathcal{K}'$ have $p_X$ as a vertex, each $d$-simplex of
  $\mathcal{K}$ has $X$ as a vertex subset.
\end{proof}

In the light of the previous lemma it is of interest to know the
conditions for a subset $X$ of $S$ in $\mathbb{R}^d$ $(d>3)$ to
project to an interior point of $S'$. We make the following statement.

\begin{lemma}\label{lem:extremeprojection}
  Let $S$ be a set of $n>d$ points in $\mathbb{R}^d$ $(d>3)$ and let
  $X\subset S$ be a subset of \mbox{$d\!-\!2$}~points.
  With $\Pi$ denote the \mbox{$(d\!-\!3)$-dimensional} hyperplane
  spanned by $X$ and with $\Pi'$ a $3$-dimensional hyperplane
  orthogonal to $\Pi$.
  Project $S$ orthogonally to $\Pi'$ and denote with $S'$ the
  resulting image of $S$ and with $p_X$ the image of $X$,
  respectively.
  Then $p_X$ is an extremal point of $S'$ if and only if $\Conv(X)$ is
  a \mbox{$(d\!-\!3)$-dimensional} facet of $\CH(S)$.
\end{lemma}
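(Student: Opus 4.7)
The plan is to translate statements about supporting hyperplanes of $\CH(S)$ that contain $\Pi$ into statements about supporting hyperplanes of $\CH(S')$ in $\Pi'$, via the orthogonal projection $\pi:\mathbb{R}^d\to\Pi'$. Throughout, I assume (by a translation) that $\Pi$ is a linear subspace, so $\Pi'=\Pi^\perp$ and $\pi$ is a linear map with $\ker\pi=\Pi$.

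The essential preliminary step, which will be the main obstacle, is to show using general position that $\pi^{-1}(p_X)\cap S=X$. Suppose some $s'\in S$ satisfies $\pi(s')=p_X$; then $s'$ lies in the affine subspace $p_X+\Pi$, which has dimension $d-3$ and already contains~$X$. If $s'\notin X$, then $X\cup\{s'\}$ is a set of $d-1$ points lying in a $(d-3)$-dimensional affine subspace, hence affinely dependent. This contradicts general position, which requires any $d-1\le d+1$ points of $S$ to be affinely independent and therefore to span an affine subspace of dimension $d-2$.

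For the ($\Leftarrow$) direction, assume $\Conv(X)$ is a $(d-3)$-dimensional face of $\CH(S)$, and let $H$ be a $(d-1)$-dimensional supporting hyperplane of $\CH(S)$ with $H\cap\CH(S)=\Conv(X)$. Since $X$ affinely spans $\Pi$, we have $\Pi\subseteq H$, so the unit normal $n$ of $H$ lies in $\Pi^\perp=\Pi'$. For every $s\in S$, because $s-\pi(s)\in\Pi\perp n$, we get $\langle n,s\rangle=\langle n,\pi(s)\rangle$. Hence the $2$-plane $H':=\pi(H)\subset\Pi'$ supports $\CH(S')$, and by the preliminary step $H'\cap S'=\{p_X\}$. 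Therefore $p_X$ is an extremal point of $S'$.

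For the ($\Rightarrow$) direction, assume $p_X$ is extremal in $S'$, and pick a $2$-dimensional plane $H'\subset\Pi'$ supporting $\CH(S')$ with $H'\cap S'=\{p_X\}$. Set $H:=H'+\Pi$, a $(d-1)$-dimensional hyperplane in $\mathbb{R}^d$. The same identity $\langle n,s\rangle=\langle n,\pi(s)\rangle$ (with $n$ the normal of $H'$ in $\Pi'$, which is also normal to $H$) shows that $H$ supports $\CH(S)$. The preliminary step gives $H\cap S=\pi^{-1}(p_X)\cap S=X$, so $H\cap\CH(S)=\Conv(X)$ is a face of $\CH(S)$, of dimension $\dim\Conv(X)=d-3$. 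Once the general-position preliminary is in hand, both directions are routine bookkeeping about orthogonal projection, so that step is really the only nontrivial one.
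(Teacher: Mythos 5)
Your overall strategy---translating supporting hyperplanes of $\CH(S)$ that contain the affine hull of $X$ into supporting $2$-planes of $\CH(S')$ at $p_X$ via the identity $\langle n,s\rangle=\langle n,\pi(s)\rangle$---is the same as the paper's, and your preliminary step ($\pi^{-1}(p_X)\cap S=X$ by general position) and your ($\Leftarrow$) direction are correct and essentially identical to the paper's forward direction. The problem is in the ($\Rightarrow$) direction, at the very first move: ``pick a $2$-dimensional plane $H'\subset\Pi'$ supporting $\CH(S')$ with $H'\cap S'=\{p_X\}$.'' Such a plane exists only if $p_X$ is a \emph{vertex} of $\Conv(S')$. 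But in this paper ``extremal'' has to mean ``lies on the boundary of $\Conv(S')$'' (that is how the lemma is used in Lemmas~\ref{lem:discr_k3_d5+} and~\ref{lem:discr_k3_d4}: the negation must yield an \emph{interior} point so that Lemma~\ref{lem:starset_d-2} applies), and $S'$ is a projection of $S$, which need not inherit general position --- so a priori $p_X$ could sit on the boundary of $\Conv(S')$ without being a vertex, e.g.\ in the relative interior of a segment or triangle spanned by other points of $S'$. In that case no plane with $H'\cap S'=\{p_X\}$ exists and your argument does not start. This is exactly the situation the paper's proof is built to handle: it takes an arbitrary supporting hyperplane through $X$ (possibly containing further points of $S$ weakly), tilts it until it supports a $(d-1)$-dimensional facet $\Conv(X\cup\{q_1,q_2\})$, and then uses that $\CH(S)$ is a simplicial polytope, so every subset of a facet's vertex set spans a face --- in particular $\Conv(X)$ is a $(d-3)$-dimensional face.

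The gap is fillable, and in two ways. Either (a) justify that $p_X$ is in fact a vertex whenever it lies on the boundary: general position of $S$ implies that $p_X$ together with any two or three points of $S'\setminus\{p_X\}$ is affinely independent (collinearity of $p_X,s_1',s_2'$ forces the $d$ points $X\cup\{s_1,s_2\}$ to be affinely dependent, and coplanarity of $p_X,s_1',s_2',s_3'$ forces the $d+1$ points $X\cup\{s_1,s_2,s_3\}$ onto a common $(d-1)$-dimensional hyperplane), so $p_X\notin\Conv(S'\setminus\{p_X\})$ whenever it is a boundary point; or (b) drop the requirement $H'\cap S'=\{p_X\}$, lift any supporting plane at $p_X$ to a supporting hyperplane $H\supseteq\operatorname{aff}(X)$ of $\CH(S)$, and argue as the paper does via simpliciality of the face $H\cap\CH(S)$. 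As written, though, the one step you dismiss as ``routine bookkeeping'' is precisely where the paper's proof does its real work, and it is not justified in your proposal.
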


\begin{proof}
  If $\Conv(X)$ is a \mbox{$(d\!-\!3)$-dimensional} facet of $\CH(S)$,
  then there exists a \mbox{$(d\!-\!1)$-dimensional} hyperplane
  $\Pi_T$ ``tangential'' to $\Conv(S)$, containing only $X$ and having
  all other points of $S$ on one side. Thus, there exists a
  ``tangential'' plane \mbox{$\Pi_T'=\Pi_T\cap\Pi'$} at $p_X$, such that
  all points of \mbox{$S'\!\setminus\!\{p_X\}$} are on one side of
  $\Pi_T'$. Hence, $p_X$ is extremal.

  If $\Conv(X)$ is not a \mbox{$(d\!-\!3)$-dimensional} facet of
  $\CH(S)$, then all \mbox{$(d\!-\!1)$-dimensional} hyperplanes
  containing $X$ have points of $S$ on both sides, and therefore $p_x$
  is not extremal in $S'$. Assume the contrary: at least one
  \mbox{$(d\!-\!1)$-dimensional} hyperplane, $\Pi_T$, containing $X$
  exists, such that all points of \mbox{$S\!\setminus\! X$} are on one
  side of $\Pi_T$. Then we could tilt $\Pi_T$ keeping all of its
  contained points and consuming the ones it hits while tilting, until
  $\Pi_T$ contains $d$ points; i.e., until $\Pi_T$ consumed two more
  points, $q_1$ and $q_2$. Still all points of $S$, except the ones
  contained in $\Pi_T$, are on one side of $\Pi_T$. Observe that a
  hyperplane spanned by $d$ points (in a point set in general
  position) is a \mbox{$(d\!-\!1)$-dimensional} hyperplane. Hence,
  $\Pi_T$ has become a supporting hyperplane of a
  \mbox{$(d\!-\!1)$-dimensional} facet, $\Conv(X\cup\{q_1,q_2\})$, of
  $\CH(S)$. As the convex hull of every subset of $(X\cup\{q_1,q_2\})$
  is a facet of $\CH(S)$, this is a contradiction to the assumption
  that $\Conv(X)$ is not a \mbox{$(d\!-\!3)$-dimensional} facet of
  $\CH(S)$.
\end{proof}

\section{Higher Dimensional Versions of The Order and Discrepancy
  Lemmas}\label{sec:higher}

We prove the higher dimensional versions of the Order and Discrepancy
Lemmas from~\cite{triangmonojournal}. The proofs are essentially the
same as in the planar case, with the difference that some facts we
used in the plane are now provided by the lemmas in the previous
sections.

Recall that in a partial order a \emph{chain} is a set of pairwise
comparable elements, whereas an \emph{antichain} is a set of pairwise
incomparable elements.

\subsection{Order Lemma}

\begin{lemma}\label{lem:ordergen} 
  Let $S$ be a set of $\eta+d+1$ points $(\eta\geq0)$ in general
  position in $\mathbb{R}^d$ $(d\geq2)$, such that $\Conv(S)$ is a
  $d$-simplex.
  Then there exists a triangulation of $S$, such that at least
  $(d-1)\eta + \eta^{(2^{(1-d)})}+1$ of its $d$-simplices contain a
  convex hull point of~$S$.
\end{lemma}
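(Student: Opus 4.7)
My plan is to proceed by induction on $\eta$. The base case $\eta = 0$ is immediate, since the only triangulation of $\Conv(S)$ is $\{\Conv(S)\}$ itself: this single $d$-simplex has all $d+1$ hull vertices of $S$, matching the claimed bound $(d-1)\cdot 0 + 0^{2^{1-d}} + 1 = 1$.

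For the inductive step, I would re-use the construction from Lemma~\ref{lem:starsimplex3d}. Fix a hull vertex $p \in S$ and let $F$ be the face of $\CH(S)$ opposite $p$. Let $q$ be the interior point of $S$ closest to the hyperplane of $F$, and insert $q$ into the trivial triangulation $\{\Conv(S)\}$. This produces $d+1$ sub-simplices: the ``opposite'' sub-simplex $\sigma_0 := \Conv(F \cup \{q\})$, which by the choice of $q$ has no further interior points of $S$, and $\sigma_i := \Conv(\{p, q\} \cup (F \setminus \{f_i\}))$ for $i = 1, \ldots, d$, each containing $p$. Writing $\eta_i$ for the number of interior points of $S$ contained in $\sigma_i$, we have $\sum_{i=1}^d \eta_i = \eta - 1$. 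Since $\sigma_0$ is kept as a single $d$-simplex in the final triangulation and contains the $d$ hull vertices of $F$, it already contributes $1$ to the count.

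On each $\sigma_i$ I apply the inductive hypothesis, viewing $\sigma_i$ as a new instance of the lemma with $\eta_i + d + 1$ points whose convex hull is the $d$-simplex $\sigma_i$. This yields a triangulation of $S \cap \sigma_i$ with at least $(d-1)\eta_i + \eta_i^{2^{1-d}} + 1$ $d$-simplices containing a hull vertex of $\sigma_i$. Summing over $i$ and adding the contribution from $\sigma_0$ gives
\[
1 + \sum_{i=1}^d \bigl[(d-1)\eta_i + \eta_i^{2^{1-d}} + 1\bigr] \;=\; (d-1)\eta + \sum_{i=1}^d \eta_i^{2^{1-d}} + 2.
\]
Setting $\alpha := 2^{1-d} \in (0, 1]$, subadditivity of $x \mapsto x^\alpha$ on $[0,\infty)$ gives $\sum_i \eta_i^\alpha \geq (\eta - 1)^\alpha$, and the mean value theorem applied to $x^\alpha$ on $[\eta-1, \eta]$ yields $\eta^\alpha - (\eta - 1)^\alpha \leq \alpha \leq 1$. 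Combining these inequalities, the total is at least $(d-1)\eta + \eta^{2^{1-d}} + 1$, closing the induction.

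The hard part will be ensuring that the simplices counted in the sub-triangulations of each $\sigma_i$ genuinely contain a hull vertex of the outer $S$, rather than merely the ``fake'' hull vertex $q$ of $\sigma_i$ (which is an interior point of $S$). I would resolve this by strengthening the inductive statement to count only simplices containing a vertex from a designated subset $H$ of the hull vertices, and then in the recursion on $\sigma_i$ taking $H_i := \{p\} \cup (F \setminus \{f_i\})$, i.e., the $d$ hull vertices of $\sigma_i$ that coincide with hull vertices of $S$. Maintaining a lower bound on $|H|$ through deeper recursive levels is the central bookkeeping challenge, since each insertion introduces a new fake vertex; when $|H|$ becomes too small in a nested sub-simplex, one falls back to Lemma~\ref{lem:starsimplex3d} applied with a specific vertex of the current $H$, and verifies that the arithmetic above still closes.
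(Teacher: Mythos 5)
Your arithmetic closes, but only under the assumption that the inductive hypothesis can be applied to each $\sigma_i$ so as to count simplices containing a hull vertex of the \emph{original} $S$ --- and that is exactly the point at which the argument has a hole, as you yourself flag in the last paragraph. Each $\sigma_i$ has the inserted interior point $q$ among its $d+1$ vertices, and the proposed fix (a designated subset $H$ of genuine hull vertices, $|H_i|=d$ at the first level) does not stabilize: at each further level of recursion a sub-simplex can lose one more genuine vertex, so after $d$ levels you can be left with $|H|=1$, and at depth greater than $d$ the ``opposite'' cell $\sigma_0$ can have \emph{no} genuine hull vertex at all, so its $+1$ contribution is lost too. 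With $|H|=1$ the only tool available is Lemma~\ref{lem:starsimplex3d}, which yields $(d-1)\eta_i+1$ simplices at that single vertex --- \emph{without} the $\eta_i^{2^{1-d}}$ term. Once even one branch of the recursion falls back to this weaker bound, the subadditivity step $\sum_i \eta_i^{\alpha}\geq(\eta-1)^{\alpha}$ breaks (the interior points may concentrate precisely in the degenerate branches), and the induction no longer produces the $\eta^{2^{1-d}}$ gain. Since that gain is the entire content of the lemma beyond Lemma~\ref{lem:starsimplex3d}, the ``bookkeeping challenge'' you defer is not bookkeeping: it is the theorem.

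The paper's proof avoids propagating the extra term through a recursion altogether. It defines, for each facet $F_i$ of the simplex, a partial order on the interior points ($p\leq_{F_i}q$ iff $p$ lies in $\Conv(F_i\cup\{q\})$), and applies Dilworth's theorem iteratively across the $d+1$ facets (using that an antichain with respect to all but two facets is a chain with respect to the remaining two) to extract a single chain of length $r\geq\eta^{2^{1-d}}$ with respect to some facet $F^*$. Inserting the chain points in nested order creates $dr+1$ cells, \emph{each} of which still has a genuine hull vertex of $S$; then Lemma~\ref{lem:starsimplex3d} is applied inside each cell with that genuine vertex as the apex, contributing $(d-1)\eta_i+1$ per cell. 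The surplus $r+1$ thus comes from the \emph{number of cells} produced by the chain, not from a recursive strengthening --- which is why the single-vertex bound of Lemma~\ref{lem:starsimplex3d} suffices there but cannot suffice in your scheme.
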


\begin{proof}
  Let $I$ be the set of the $\eta$ interior points of $S$.
  Let $\mathcal{F} = \{F_1, \dots, F_{d+1}\}$ be the set of the
  $(d-1)$-dimensional faces of $\CH(S)$.
  For each $F_i \in \mathcal{F}$ we define a partial order
  $\leq_{F_i}$ on $I$. We say that $p \leq_{F_i} q$ $(p,q\in I)$ if
  $p$ is in the interior of the $d$-simplex $\Conv(F_i \cup \{q\})$.
  Our goal is to obtain a ``long'' chain $C^*$ with respect to some
  $F^*\in\mathcal{F}$ such that $|C^*| \geq \eta^{(2^{(1-d)})}$.

  By Dilworth's Theorem~\cite{dilworth} w.r.t. $\leq_{F_{d+1}}$, there
  exists a chain or an antichain $C_{d+1}$ in $I$ of size at least
  $\sqrt{\eta}\geq \eta^{(2^{(1-d)})}$. If $C_{d+1}$ is a chain then
  we obtain $C^*=C_{d+1}$, $|C^*|\geq\eta^{(2^{(1-d)})}$, and
  $F^*=F_{d+1}$.
  Otherwise, we iteratively apply Dilworth's Theorem
  w.r.t. $\leq_{F_i}$ to the points of the antichain $C_{i+1}$, $i$
  from $d$ downto $3$, to obtain a chain or antichain $C_{i}$ of size
  at least $\sqrt{|C_{i+1}|} = \eta^{(2^{(i-d-2)})}$.
  As soon as $C_{i}$ is a chain, terminate with $C^*=C_{i}$,
  $F^*=F_{i}$, and $|C^*| \geq \eta^{(2^{(1-d)})}$.
  Otherwise, the process ends with the antichain $C_3$ of size at
  least $\eta^{(2^{(1-d)})}$. But, similar to the planar case, an
  antichain with respect to all but two faces is a chain with respect
  to the remaining two faces. Hence, $C^* = C_3$, $F^*=F_2$, with
  $|C^*|\geq \eta^{(2^{(1-d)})}$.

  Let $p_1\leq_{F^*}\ldots\leq_{F^*} p_r$ $(r=|C^*|)$ be the points of
  $C^*$. Construct a triangulation $\mathcal{T}$ of $S$, starting with
  $\mathcal{T}$ consisting only of the $d$-simplex $\Conv(S)$. Then
  insert the points of $C^*$ into $\mathcal{T}$ in the order
  $p_r,\ldots, p_1$. With each step one $d$-simplex is replaced by
  $(d+1)$ new ones. This results in an intermediate triangulation
  $\mathcal{T}$ of $((S\cap\CH(S))\cup \{p_1\ldots p_r\})$ consisting
  of $(dr+1)$ many $d$-simplices, each of which having at least one
  point in $\CH(S)$ as a vertex.

  Let $\sigma_i$, $1\leq i \leq dr+1$, be the $d$-simplices of
  $\mathcal{T}$, let $\eta_i$ be the number of interior points
  of $\sigma_i$, and let $p_i$ be a vertex of $\sigma_i$ that is also
  in $\CH(S)$.
  By Lemma~\ref{lem:starsimplex3d} there exists a triangulation of
  $S\cap\sigma_i$ such that $(d-1)(\eta_i+d+1)-d^2+2$ of its
  $d$-simplices have $p_i$ as a vertex.
  Therefore, the remaining points can be inserted into $\mathcal{T}$,
  such that at least
  $\sum_{i=1}^{dr+1}\left((d-1)\eta_i+1\right) =
  (d-1)(\eta-r) + (dr+1) = (d-1)\eta + r + 1$ of the $d$-simplices
  of $\mathcal{T}$ have at least one point in $\CH(S)$. Since $r\geq
  \eta^{(2^{(1-d)})}$, at least $(d-1)\eta + \eta^{(2^{(1-d)})} + 1$
  many $d$-simplices have at least one point in $\CH(S)$.
\end{proof}

We are now able to prove the high-dimensional variation of the ``Order
Lemma'':

\begin{lemma}[Generalized Order Lemma]\label{lem:order}
  Let $S$ be a set of $n \geq d+1$ points in general position in
  $\mathbb{R}^d$ $(d>2)$ with $h=|S\cap\CH(S)|$.
  Then there exists a triangulation of $S$, such that at least
  $(d-1)n+(n-h)^{(2^{(1-d)})}+2h-c_d$ of its $d$-simplices have at
  least one point in $\CH(S)$, with $c_d$ as defined in
  Lemma~\ref{lem:convtriang}.
\end{lemma}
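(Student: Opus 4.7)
The plan is to reduce to the ``$\Conv(S)$ is a $d$-simplex'' case already handled by Lemma~\ref{lem:ordergen}. The idea is to triangulate the convex-hull layer $P:=S\cap\CH(S)$ using Lemma~\ref{lem:convtriang}, and then fill in the interior points of $I:=S\setminus P$ by applying Lemma~\ref{lem:ordergen} separately inside each $d$-simplex of the outer triangulation; the ``many'' simplices produced inside each piece will automatically share a vertex with the outer triangulation and hence with $\CH(S)$.

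Concretely, I set $|P|=h$, $|I|=n-h$, and (assuming for the main case that $h>d(d+1)$) invoke Lemma~\ref{lem:convtriang} to get a triangulation $\mathcal{T}_P$ of $P$ with at least $(d+1)h-c_d$ simplices. Every vertex of every simplex of $\mathcal{T}_P$ lies in $P\subseteq\CH(S)$. For each $d$-simplex $\sigma\in\mathcal{T}_P$, let $\eta_\sigma$ denote the number of points of $I$ interior to $\sigma$; general position guarantees that each point of $I$ is interior to a unique such $\sigma$, so $\sum_\sigma\eta_\sigma=n-h$. The point set $V(\sigma)\cup(I\cap\sigma)$ has exactly $\eta_\sigma+d+1$ points in general position and convex hull equal to the $d$-simplex $\sigma$, so Lemma~\ref{lem:ordergen} supplies a triangulation of it with at least $(d-1)\eta_\sigma+\eta_\sigma^{2^{(1-d)}}+1$ simplices having a vertex of $\sigma$, equivalently a point of $\CH(S)$, as a vertex. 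Since the local triangulations leave $\partial\sigma$ intact, they glue along the shared $(d-1)$-faces of $\mathcal{T}_P$ into one triangulation $\mathcal{T}$ of $S$.

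Summing the per-$\sigma$ counts yields at least $(d-1)(n-h)+\sum_\sigma\eta_\sigma^{2^{(1-d)}}+|\mathcal{T}_P|$ simplices incident to $\CH(S)$. The only nonroutine ingredient, and the main obstacle, is the subadditivity inequality $\sum_\sigma\eta_\sigma^{2^{(1-d)}}\ge (n-h)^{2^{(1-d)}}$, which I would derive from the fact that $t\mapsto t^{2^{(1-d)}}$ is concave on $[0,\infty)$ and vanishes at $0$, hence is subadditive (applying concavity with $\lambda_\sigma=\eta_\sigma/(n-h)$ against the endpoints $a=n-h$ and $b=0$, then summing over $\sigma$). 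Plugging in $|\mathcal{T}_P|\ge (d+1)h-c_d$ and using the identity $(d-1)(n-h)+(d+1)h=(d-1)n+2h$ gives exactly the claimed bound $(d-1)n+(n-h)^{2^{(1-d)}}+2h-c_d$.

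The edge case $h\le d(d+1)$ requires only a minor modification: Lemma~\ref{lem:convtriang} is no longer available, but any triangulation of $P$ of size at least $h-d$ (Theorem~\ref{thm:n-d}) fed into the same argument still suffices, because a direct arithmetic check shows $(d-1)(n-h)+(h-d)\ge (d-1)n+2h-c_d$ precisely when $h\le d(d+1)$, so the shortfall in $|\mathcal{T}_P|$ is absorbed by the gain in the constant $c_d$. Thus the whole proof rests on Lemma~\ref{lem:convtriang}, Lemma~\ref{lem:ordergen}, and the one concavity step; everything else is bookkeeping.
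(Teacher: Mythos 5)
Your proposal is correct and follows essentially the same route as the paper: triangulate the hull layer via Lemma~\ref{lem:convtriang} (falling back on Theorem~\ref{thm:n-d} when $h\le d(d+1)$), apply Lemma~\ref{lem:ordergen} inside each resulting $d$-simplex, and combine the counts using the subadditivity of $t\mapsto t^{2^{(1-d)}}$. The only differences are cosmetic: you make explicit the concavity justification for subadditivity and the gluing of the local triangulations, both of which the paper leaves implicit.
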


\begin{proof}
  Let $S'=S\cap\CH(S)$ be the set of convex hull points of $S$.
  If $h>d(d+1)$, then by Lemma~\ref{lem:convtriang} there exists a
  triangulation of $S'$ of size $\tau \geq (d+1)h-c_d$.
  If $h \leq d(d+1)$, then by Theorem~\ref{thm:n-d} any triangulation
  of $S'$ has size at least
  $\tau \geq h-d = (d+1)h -dh-d \geq (d+1)h-c_d$.

  Let $\sigma_i$, $1\leq i\leq\tau$, be the $d$-simplices of the
  triangulation of $S'$, and let $\eta_i$ be the number of interior
  points of $\sigma_i$.
  By Lemma~\ref{lem:ordergen} there exists a triangulation
  $\mathcal{T}_i$ of $S\cap\sigma_i$, such that at least $(d-1)\eta_i
  + \eta_i^{(2^{(1-d)})} + 1$ of the $d$-simplices of $\mathcal{T}_i$
  have at least one point in $\CH(S\cap\sigma_i)$.
  In total we obtain a triangulation $\mathcal{T}$ of $S$, such that
  at least
  $\sum_{i=1}^{\tau}\left( (d-1)\eta_i + \eta_i^{(2^{(1-d)})} + 1
  \right) \geq
  (d-1)\sum_{i=1}^{\tau}\left(\eta_i\right) + \left(\sum_{i=1}^{\tau}
    \eta_i\right)^{(2^{(1-d)})} + \tau \geq
  (d-1)(n-h) + (n-h)^{(2^{(1-d)})} + (d+1)h-c_d =
  (d-1)n +(n-h)^{(2^{(1-d)})}+ 2h-c_d$ of the $d$-simplices of
  $\mathcal{T}$ have at least one point in $\CH(S)$.
\end{proof}

\subsection{Discrepancy Lemma}

Let $S$ be a $k$-colored set of $n$ points in general position in
$\mathbb{R}^d$ and let $S_1,S_2, \dots, S_k$ be its color classes.
Recall that we consider $k$ and $d$ to be constants w.r.t. $n$,
i.e., $k$ and $d$ are independent of~$n$.
We define the \emph{discrepancy} $\delta(S)$ of $S$ to be the sum of
differences between the sizes of its biggest chromatic class and the
remaining classes. Let $S_{\max}$ be the chromatic class with the
maximum number of elements. Then $\delta(S)=\sum
(|S_{\max}|-|S_i|)=(k-1)|S_{\max}|-|S\setminus{S_{\max}}| =
k|S_{\max}|-n$.
Further, we denote with $S_{\min}$ the chromatic class with the least
number of elements.

We start with two statements describing the interaction of
$\delta(S)$, $S_{\max}$, and $S_{\min}$.

\begin{lemma}\label{lem:dS_min_max}
  Let $S$ be a $k$-colored set of $n$ points in general position in
  $\mathbb{R}^d$.
  Let $f_{(n,d,k)}$ be some function on $k$, $d$, and $n$.
  If $|S_{\min}| \leq \frac{n}{k}-(k-1)\cdot f_{(n,d,k)}$ then
  $|S_{\max}| \geq \frac{n}{k} + f_{(n,d,k)}$, and $\delta(S) \geq
  k\cdot f_{(n,d,k)}$.
\end{lemma}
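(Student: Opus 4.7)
The plan is to prove both claims by a simple averaging argument on the remaining $k-1$ color classes, followed by direct substitution into the definition of $\delta(S)$.

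First I would note that, by definition, $\sum_{i=1}^{k}|S_i|=n$. Removing $S_{\min}$ and using the hypothesis gives
\[
\sum_{S_i\neq S_{\min}} |S_i| \;=\; n-|S_{\min}| \;\geq\; n-\frac{n}{k}+(k-1)\cdot f_{(n,d,k)} \;=\; (k-1)\!\left(\frac{n}{k}+f_{(n,d,k)}\right).
\]
Since $|S_{\max}|$ is at least the average of any collection of color-class sizes containing it, dividing by $k-1$ yields
\[
|S_{\max}| \;\geq\; \frac{n}{k}+f_{(n,d,k)},
\]
which is the first assertion.

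For the second assertion I would plug this into $\delta(S)=k|S_{\max}|-n$ derived in the text, obtaining
\[
\delta(S) \;=\; k|S_{\max}|-n \;\geq\; k\!\left(\frac{n}{k}+f_{(n,d,k)}\right)-n \;=\; k\cdot f_{(n,d,k)}.
\]

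There is no real obstacle here; the statement is a pure counting/averaging identity, and the only thing to be careful about is that $S_{\max}$ need not coincide with the maximum of $S\setminus S_{\min}$ in general, but this does not matter because $|S_{\max}|$ is always at least as large as the maximum, hence at least as large as the average, of the remaining $k-1$ classes. No geometric properties of $S$, of general position, or of the dimension $d$ are used in the argument.
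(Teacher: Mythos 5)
Your proof is correct and follows essentially the same route as the paper: bound $\sum_{S_i\neq S_{\min}}|S_i|=n-|S_{\min}|$ from below, average over the $k-1$ remaining classes to get the bound on $|S_{\max}|$, and substitute into $\delta(S)=k|S_{\max}|-n$. No discrepancies to report.
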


\begin{proof}
  From $|S_{\min}| \leq \frac{n}{k}-(k-1)\cdot f_{(n,d,k)}$ we get
  $|S\setminus S_{\min}| = n-|S_{\min}| \geq
  n-\frac{n}{k}+(k-1)\cdot f_{(n,d,k)} =
  (k-1)\cdot\left(\frac{n}{k}+ f_{(n,d,k)}\right)$.
  As there exist $(k-1)$ color classes besides $|S_{\min}|$, all not
  bigger than $|S_{\max}|$, we have $|S_{\max}| \geq \frac{n}{k} +
  f_{(n,d,k)}$.
  This leads to $\delta(S)= k|S_{\max}|-n \geq k\left(\frac{n}{k}
    + f_{(n,d,k)}\right)-n = k\cdot f_{(n,d,k)}$.
\end{proof}

The following corollary is a direct consequence of
Lemma~\ref{lem:dS_min_max}.

\begin{corollary}\label{cor:dS_min_max}
  Let $S$ be a $k$-colored set of $n$ points in general position in
  $\mathbb{R}^d$.
  Let $f_{(n,d,k)}$ be some function on $k$, $d$, and $n$.
  If $\delta(S) < k\cdot f_{(n,d,k)}$ then $|S_{\min}| >
  \frac{n}{k}-(k-1)\cdot f_{(n,d,k)}$.
\end{corollary}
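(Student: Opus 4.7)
The plan is to derive the corollary immediately as the contrapositive of Lemma~\ref{lem:dS_min_max}. The previous lemma asserts the implication ``$|S_{\min}| \leq \frac{n}{k} - (k-1)\cdot f_{(n,d,k)}$ implies $\delta(S) \geq k\cdot f_{(n,d,k)}$'' (alongside the $|S_{\max}|$ conclusion). Negating both sides yields ``$\delta(S) < k\cdot f_{(n,d,k)}$ implies $|S_{\min}| > \frac{n}{k} - (k-1)\cdot f_{(n,d,k)}$'', which is exactly the statement of the corollary. Formally, I would assume $\delta(S) < k\cdot f_{(n,d,k)}$ and suppose for contradiction that $|S_{\min}| \leq \frac{n}{k} - (k-1)\cdot f_{(n,d,k)}$; then Lemma~\ref{lem:dS_min_max} forces $\delta(S) \geq k\cdot f_{(n,d,k)}$, directly contradicting the hypothesis. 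Hence no such $|S_{\min}|$ can exist, and the desired strict inequality follows.

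Since every strict/non-strict inequality matches correctly under negation (the ``$\leq$''-hypothesis of the lemma pairs with the strict ``$>$''-conclusion of the corollary, and the ``$\geq$''-conclusion of the lemma pairs with the strict ``$<$''-hypothesis of the corollary), there is essentially no obstacle; the only thing to check is this alignment of inequalities, which I would verify explicitly in the one-line write-up. The value of the restatement is that in later sections the natural input will be an \emph{upper} bound on the discrepancy $\delta(S)$, and this corollary immediately converts it into a \emph{lower} bound on $|S_{\min}|$, which is the quantity needed to ensure that every color class contains enough points to feed into the Order Lemma and the pulling-complex constructions of Section~\ref{sec:complex_and_triang}.
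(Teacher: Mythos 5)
Your proof is correct and matches the paper's intent exactly: the paper states the corollary as "a direct consequence" of Lemma~\ref{lem:dS_min_max}, which is precisely the contrapositive argument you give, with the strict and non-strict inequalities aligning as you verified.
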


The previous two technical statements will be needed for the
Theorems~\ref{thm:triang_or_dis_d_base}
and~\ref{thm:triang_or_dis_d}.
For the sake of completeness we state the ``original'' Discrepancy
Lemma for $d=k=2$ from~\cite{triangmonojournal}.

\begin{lemma}[Discrepancy Lemma~\cite{triangmonojournal}]\label{lem:discrepancyd2k2}
  Let $S$ be a $2$-colored set of $n\geq3$ points in general position
  in $\mathbb{R}^2$, such that $\delta(S)\geq2$.
  Then $S$ determines at least $\frac{\delta(S)-2}{6}(n+\delta(S))$
  empty monochromatic triangles.
\end{lemma}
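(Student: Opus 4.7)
The plan is to convert the discrepancy $\delta(S)=|S_{\max}|-|S_{\min}|$ of the $2$-coloring into a lower bound on empty monochromatic triangles by exploiting a triangulation of the majority colour class, following the strategy of~\cite{triangmonojournal}. Writing $M=|S_{\max}|$ and $m'=|S_{\min}|$, we have $n+\delta(S)=2M$, so the target bound rewrites as $\tfrac{(\delta(S)-2)\,M}{3}$; the factor $\delta(S)-2$ is the ``surplus'' available to charge against, and the factor $M$ reflects that this charging should be spread over linearly many essentially distinct witnesses.

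First I would triangulate $S_{\max}$ alone. Every planar triangulation of $S_{\max}$ has $\tau=2M-h-2$ triangles (where $h$ is the number of convex-hull vertices of $S_{\max}$), each with three majority vertices and no further majority point in its interior. Denoting by $k_\Delta$ the number of $S_{\min}$-points interior to such a triangle $\Delta$, if $k_\Delta=0$ then $\Delta$ itself is empty monochromatic, while if $k_\Delta\geq 3$ then any triangulation of those $k_\Delta$ interior minority points inside $\Delta$ contributes at least $k_\Delta-2$ empty monochromatic minority triangles (they are empty in $S$ because $\Delta$ contains no majority point in its interior). Combining $\sum_\Delta k_\Delta=m'$ with $\tau\geq M-1$ already yields $\delta(S)-2$ empty monochromatic triangles, matching the leading constant but missing the factor linear in $M$.

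To amplify the count, I would not commit to a single triangulation of $S_{\max}$ but instead invoke the planar ``Order Lemma'' of~\cite{triangmonojournal}: for any anchor $p$ chosen among the convex-hull vertices of $S_{\max}$, it furnishes a triangulation of $S_{\max}$ containing many triangles incident to $p$, arranged as a fan. Running this construction with $p$ sweeping through all $M$ possible anchors and tracking, in each resulting fan, those triangles that avoid $S_{\min}$ entirely produces, by an averaging argument, a number of empty monochromatic triangles growing linearly in $M$. Each empty monochromatic triangle is counted at most thrice across different anchors (once per vertex), and a further factor of two arises from pairing adjacent fans along shared edges; together these yield the denominator $6$ in the claimed bound. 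The empty minority triangles harvested inside heavily populated majority triangles are added in to cover the regime where the fan-based majority count falls short.

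The main technical obstacle will be the bookkeeping that avoids over-counting and, above all, the treatment of the ``intermediate'' majority triangles with $k_\Delta\in\{1,2\}$, which yield neither an empty majority triangle nor enough minority points to triangulate. These must be absorbed either by charging them against the $-2$ term in $\delta(S)-2$ or by a local flip argument that recovers an empty monochromatic triangle whenever two neighbouring fans disagree on a diagonal. Once this accounting is set up, the quantitative bound $\tfrac{\delta(S)-2}{6}(n+\delta(S))$ follows by combining the per-anchor lower bound with the number of available anchors; the edge cases $\delta(S)=2$ (target $0$, trivially true) and small $n$ are verified by hand.
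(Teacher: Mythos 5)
First, a point of reference: the paper does not prove this lemma at all --- it is quoted verbatim from~\cite{triangmonojournal} ``for the sake of completeness'', so there is no in-paper proof to compare against. Your proposal therefore has to stand on its own, and its core idea does in fact work, and cleanly: for each of the $M=|S_{\max}|$ anchors $p$ in the majority class, the angular fan around $p$ (this is exactly Lemma~\ref{lem:starsimplex2d} of the paper, not the Order Lemma, which is the Dilworth/chain statement) gives at least $M-2$ interior-disjoint majority triangles incident to $p$, each empty of majority points by construction; since each of the at most $m'=|S_{\min}|$ minority points kills at most one of them, at least $(M-2)-m'=\delta(S)-2$ are empty monochromatic. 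Summing over all $M$ anchors and dividing by $3$ (a triangle is seen only from its own three vertices) gives $\tfrac{M(\delta(S)-2)}{3}=\tfrac{(\delta(S)-2)(n+\delta(S))}{6}$, which is exactly the claim. None of the machinery in your last two paragraphs --- harvesting minority triangles inside heavily populated majority triangles, the local flip argument, the special treatment of triangles with one or two interior minority points --- is needed: such triangles are simply discarded, and the count $(M-2)-m'$ already absorbs them.

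The one genuine error is your accounting of the constant. You correctly rewrite the target as $\tfrac{(\delta(S)-2)M}{3}$ using $n+\delta(S)=2M$, but then you also insert ``a further factor of two \ldots from pairing adjacent fans along shared edges'' to explain the denominator $6$. That factor of two is the same one you already folded into $2M=n+\delta(S)$; counting it again would leave you with $\tfrac{(\delta(S)-2)(n+\delta(S))}{12}$, half the claimed bound. There is also no combinatorial source for it: a triangle $pq_iq_{i+1}$ occurs in the fan of an anchor only when that anchor is one of its three vertices, so the over-count is exactly at most $3$. Strike the spurious factor and the unnecessary side arguments, make the per-anchor count $(M-2)-m'$ explicit, and the proof is complete.
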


In the following we proof the high-dimensional variation of this
``Discrepancy Lemma'':

\begin{lemma}[Generalized Discrepancy Lemma]\label{lem:discrgen}
  Let $S$ be a $k$-colored set of $n>k\cdot 4^{d^2(d+1)}$ points in
  general position in $\mathbb{R}^d$, with $d\geq k > 3$.
  Then $S$ determines $\Omega{\left(n^{d-k+1}\cdot(\delta(S)+\log
    n)\right)}$ empty monochromatic $d$-simplices.
\end{lemma}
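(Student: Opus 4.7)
The natural approach is an induction on $d$ (with $k$ fixed) that strengthens the statement: I will show that all the counted simplices can be taken to lie in the single largest color class $S_{\max}$. The induction begins at $d=k$ and climbs one dimension at a time, using central projection from a vertex as the reduction.

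\textbf{Base case $d=k$.} Let $c$ be the color of $S_{\max}$, so that $|S_c|\geq n/k$ and $k|S_c|-n=\delta(S)$. The size assumption forces $|S_c|>4^{d^2(d+1)}$, so for each $p\in S_c$ Lemma~\ref{lem:starp} produces a $d$-dimensional simplicial complex $\mathcal{K}_p$ on the vertex set $S_c$ all of whose $d$-simplices contain $p$, and of size strictly greater than $(d-1)|S_c|+\frac{\log|S_c|}{2(d-1)}-2c_{d-1}$. These simplices are pairwise interior-disjoint and monochromatic of color $c$, so at most $n-|S_c|$ of them can fail to be empty in $S$. Hence $\mathcal{K}_p$ contributes at least $d|S_c|-n+\frac{\log|S_c|}{2(d-1)}-O(1)=\Omega(\delta(S)+\log n)$ empty monochromatic $d$-simplices of color $c$ that are incident to $p$. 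Summing over $p\in S_c$ and dividing by the constant multiplicity $d+1$ (each $d$-simplex has $d+1$ vertices, any of which may play the role of $p$) yields $\Omega(n(\delta(S)+\log n))$ distinct such simplices.

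\textbf{Inductive step $d>k$.} For each $p\in S_c$, project $S\setminus\{p\}$ centrally from $p$ onto a generic affine hyperplane $\Pi_p$ avoiding $p$, obtaining a $k$-colored set $S'_p$ of $n-1$ points in general position in $\Pi_p\cong\mathbb{R}^{d-1}$. Color-class sizes are preserved except that $|S_c|$ drops by $1$, so $c$ remains the color of the largest class and $\delta(S'_p)\geq\delta(S)-k+1$. The hypothesis on $n$ still gives $n-1>k\cdot 4^{(d-1)^2 d}$, so the inductive hypothesis supplies $\Omega(n^{d-k}(\delta(S)+\log n))$ empty monochromatic $(d-1)$-simplices of color $c$ in $S'_p$. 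For each such $(d-1)$-simplex $\tau$, the convex hull $\hat\sigma$ of $p$ together with the preimages in $S_c$ of the vertices of $\tau$ is a $d$-simplex in $\mathbb{R}^d$ whose $d+1$ vertices all lie in $S_c$ (hence monochromatic of color $c$). Any $q\in S\setminus\{p\}$ in the interior of $\hat\sigma$ would project into the interior of $\tau$, contradicting the emptiness of $\tau$; so $\hat\sigma$ is empty in $S$. Summing over $p\in S_c$ and dividing by the multiplicity $d+1$ produces $\Omega(n^{d-k+1}(\delta(S)+\log n))$ distinct empty monochromatic $d$-simplices, completing the induction.

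\textbf{Main obstacle.} The key technical point is the bookkeeping around the central projection: one must verify that a generic $\Pi_p$ puts $S'_p$ in general position (so the auxiliary lemmas and the inductive hypothesis apply), that the discrepancy drops by at most an additive constant, and that the cone with $p$ simultaneously preserves monochromaticity and emptiness. A more subtle point is that the induction relies on the per-color strengthening of the statement; without it, one cannot guarantee that the $(d-1)$-simplices supplied by the inductive hypothesis lie in $S_c$ and can be coned with $p$ to produce \emph{monochromatic} $d$-simplices. This strengthening is exactly what makes it possible to gain a fresh factor of $n$ at each dimension increment, and it has to be carefully established in the base case before the induction can be launched.
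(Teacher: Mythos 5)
Your base case is sound and is in essence the paper's own argument specialized to $d=k$: apply Lemma~\ref{lem:starp} to each $p\in S_{\max}$, note that the $(d-1)|S_{\max}|+\frac{\log_2|S_{\max}|}{2(d-1)}-2c_{d-1}$ interior-disjoint monochromatic $d$-simplices through $p$ can be blocked by at most $n-|S_{\max}|$ points of other colors, and divide by the multiplicity $d+1$. The inductive step, however, has a genuine gap. First, central projection from $p$ onto a \emph{single} affine hyperplane is not defined on all of $S\setminus\{p\}$ unless $p$ is a vertex of $\CH(S)$: for an interior $p$ the rays $r_q$ point in all directions and no single hyperplane meets them all. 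The standard repair (used in Lemma~\ref{lem:starp} and Theorem~\ref{thm:triang_or_dis_d}) sandwiches $\Conv(S)$ between two parallel hyperplanes and retains only one of the two projected halves, of size about $(n-1)/2$. But then both of your bookkeeping claims fail: the retained half need not have $c$ as its largest color class, and its discrepancy bears no useful relation to $\delta(S)$ --- a set with $\delta(S)=\Theta(n)$ can split so that the half you keep is perfectly balanced, i.e.\ $\delta(S'_p)=0$ (the discrepancy then sits entirely in the discarded half). The inductive hypothesis then delivers only $\Omega(n^{d-k}\log n)$ per point, and the $\delta(S)$ term --- which is the whole point of the lemma when it is invoked in Theorem~\ref{thm:ems_in_d3+} with $\delta(S\cap C)=\Omega(n^{2^{-d}})\gg\log n$ --- is lost. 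Restricting to $p\in S_c\cap\CH(S)$, where a single hyperplane does suffice and $\delta$ drops by exactly $k-1$, does not rescue the argument, since there may be only $O(1)$ such points while you need $\Theta(n)$ choices of $p$ at each dimension increment. Repairing this would require a substantially stronger per-color invariant tracked over \emph{both} projection halves (note that $k|S'_c|-|S'|$ can be negative on one half), none of which is set up in the proposal.

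The paper avoids this trap by never pushing $\delta$ through a point-losing projection. It fixes an entire subset $X\subset S_{\max}$ of $d-k+1$ points at once and applies Lemma~\ref{lem:starset}: an \emph{orthogonal} projection along the flat spanned by $X$, which is a bijection on $S_{\max}$ and collapses $X$ to a single point, followed by one application of Lemma~\ref{lem:starp} in the image. This produces, entirely inside $S_{\max}$, a complex of $(k-1)|S_{\max}|+\Omega(\log n)$ interior-disjoint simplices all containing $X$; the blocking count against the $n-|S_{\max}|$ foreign points is performed once, in the original space, giving $\delta(S)+\Omega(\log n)$ empty monochromatic simplices per $X$, and the factor $n^{d-k+1}$ comes from the $\binom{|S_{\max}|}{d-k+1}$ choices of $X$ divided by the constant multiplicity $\binom{d+1}{d-k+1}$. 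If you insist on an induction over the dimension via central projection, the viable form is the dichotomy of Theorem~\ref{thm:triang_or_dis_d} (either many simplices or a large convex subset of high discrepancy), not a direct transfer of $\delta$ through the projection.
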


\begin{proof}
  Let $S_{\max}$ be the largest chromatic class of $S$.
  Consider a subset $X$ of $d-k+1$ points of $S_{\max}$.
  From the requirements of the lemma we have $d>3$, $|S_{\max}|\geq
  \left\lceil\frac{n}{k}\right\rceil > 4^{d^2(d+1)}$, and $1\leq
  |X|\leq d-3$.
  Thus we may apply Lemma~\ref{lem:starset} to $X$ which guarantees
  the existence of a $d$-simplicial complex $\mathcal{K}_X$ with
  vertex set $S_{\max}$, such that $\mathcal{K}_X$ has size at least
  $(d-(d-k+1))|S_{\max}|+\frac{\log_2{|S_{\max}|}}{2(d-(d-k+1))}-2c_{d-1}=
  (k-1)|S_{\max}|+\frac{\log_2{|S_{\max}|}}{2(k-1)}-2c_{d-1}$ and all
  $d$-simplices of $\mathcal{K}_X$ have $X$ in their vertex set.
  Since every point of $S \setminus S_{\max}$ is in at most one
  $d$-simplex of $\mathcal{K}_X$, $\mathcal{K}_X$ contains at least
  $\delta(S)+\frac{\log_2{|S_{\max}|}}{2(k-1)}-2c_{d-1}$ empty
  monochromatic $d$-simplices.

  We do this counting for each of the $\binom{|S_{\max}|}{d-k+1}$
  subsets of $(d-k+1)$ points of $S_{\max}$, and over-count each empty
  monochromatic $d$-simplex at most $\binom{d+1}{d-k+1}$ times.
  Hence, in total we get
  $\frac{\binom{|S_{\max}|}{d-k+1}}{\binom{d+1}{d-k+1}} \cdot \left(
    \delta(S)+\frac{\log_2{|S_{\max}|}}{2(k-1)}-2c_{d-1} \right)$ empty
  monochromatic $d$-simplices.
  As $|S_{\max}|\geq \left\lceil\frac{n}{k}\right\rceil$, and $d$,
  $c_{d-1}$ (see Lemma~\ref{lem:convtriang}), and $k$ are constant
  w.r.t. $n$, we get $\Omega{\left(n^{d-k+1}\cdot (\delta(S)+\log n)
    \right)}$ empty monochromatic $d$-simplices in $S$.
\end{proof}

Observe, that this ``Generalized Discrepancy Lemma'' is not applicable
for small values of $k$ and $d$. With the 2-colored variant in
$\mathbb{R}^2$ already provided in Lemma~\ref{lem:discrepancyd2k2}
(\cite{triangmonojournal}), we generalize it to $\mathbb{R}^d$ in the
next lemma.

\begin{lemma}\label{lem:discr2colors}
  Let $S$ be a $2$-colored set of $n>2d$ points in general position in
  $\mathbb{R}^d$, with $d\geq3$.
  Then $S$ determines $\Omega{\left(n^{d-1}\cdot\delta(S)\right)}$
  empty monochromatic $d$-simplices.
\end{lemma}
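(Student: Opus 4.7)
The plan is to mirror the proof of Lemma~\ref{lem:discrgen} but, since $k=2$ forces the subset $X$ to have size $d-k+1=d-1$, to use Lemma~\ref{lem:starset2r} (valid for $|X|=d-1$) in place of Lemma~\ref{lem:starset} (which would require $|X|\le d-3$). Let $S_{\max}$ and $S_{\min}$ denote the larger and smaller color classes, so that $|S_{\max}|\ge n/2$ and $\delta(S)=2|S_{\max}|-n$.

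For every $(d-1)$-subset $X\subset S_{\max}$, the first step is to invoke Lemma~\ref{lem:starset2r} on $S$ to obtain a $d$-dimensional simplicial complex $\mathcal{K}_X$ with vertex set $S$, size at least $n-d$, and such that every $d$-simplex of $\mathcal{K}_X$ contains $X$ in its vertex set. Because $\mathcal{K}_X$ is a simplicial complex whose $0$-skeleton is the entire point set $S$, none of its $d$-simplices can contain a point of $S$ in its interior, so every such $d$-simplex is empty with respect to $S$. Each $d$-simplex of $\mathcal{K}_X$ has exactly $(d+1)-(d-1)=2$ vertices outside $X$, and is therefore monochromatic precisely when both extra vertices lie in $S_{\max}$. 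Unpacking the construction underlying Lemma~\ref{lem:starset2r}, the complex $\mathcal{K}_X$ arises as the lift of a planar fan around the image $p_X$ of $X$ built by Lemma~\ref{lem:starsimplex2d}; since every non-apex vertex of such a fan belongs to at most two of its triangles, every $q\in S\setminus X$ is a vertex of at most two $d$-simplices of $\mathcal{K}_X$. Hence at most $2|S_{\min}|$ of the $d$-simplices of $\mathcal{K}_X$ are non-monochromatic, leaving at least $(n-d)-2|S_{\min}|=2|S_{\max}|-n-d=\delta(S)-d$ empty monochromatic ones.

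Summing over the $\binom{|S_{\max}|}{d-1}$ choices of $X$ and dividing by the $\binom{d+1}{d-1}$ times that each empty monochromatic $d$-simplex can be counted (once per $(d-1)$-subset of its own vertex set), one obtains at least $\frac{1}{\binom{d+1}{d-1}}\binom{|S_{\max}|}{d-1}(\delta(S)-d)$ distinct empty monochromatic $d$-simplices. Restricting to $\delta(S)\ge 2d$ (for smaller $\delta(S)$ the claimed $\Omega$-bound is vacuous) gives $\delta(S)-d\ge \delta(S)/2$, and since $|S_{\max}|\ge n/2$ with $d$ constant, this is $\Omega(n^{d-1}\cdot\delta(S))$ as required. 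The hard part will be isolating, from the fan construction behind Lemma~\ref{lem:starset2r}, the incidence bound that every point outside $X$ is a vertex of at most two simplices of $\mathcal{K}_X$; once this is available, the subtraction of non-monochromatic simplices yields the $\delta(S)-d$ surplus, and the remainder is exactly the averaging argument used for Lemma~\ref{lem:discrgen}.
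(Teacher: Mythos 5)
Your proposal is correct and follows the same skeleton as the paper's proof: for each $(d-1)$-subset $X$ of $S_{\max}$ invoke Lemma~\ref{lem:starset2r}, extract at least $\delta(S)-d$ empty monochromatic $d$-simplices containing $X$, and then average over the $\binom{|S_{\max}|}{d-1}$ choices of $X$ with the over-counting factor $\binom{d+1}{d-1}$. The one place you diverge is that you apply Lemma~\ref{lem:starset2r} to the whole bi-colored set $S$, so emptiness is automatic but monochromaticity is not, and you must then prove the incidence bound that each point of $S_{\min}$ is a vertex of at most two $d$-simplices of $\mathcal{K}_X$ --- true of the fan construction, but not part of the lemma's stated interface, so you would indeed have to reopen that construction as you anticipate. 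The paper instead applies Lemma~\ref{lem:starset2r} to $S_{\max}$ alone: every $d$-simplex of $\mathcal{K}_X$ is then monochromatic and empty of points of $S_{\max}$ by construction, and each point of $S\setminus S_{\max}$ destroys at most one $d$-simplex simply because the $d$-simplices of a simplicial complex are interior-disjoint. This yields the identical per-$X$ count $(|S_{\max}|-d)-|S\setminus S_{\max}|=\delta(S)-d$ while using only the statement of the lemma, so the ``hard part'' you flag disappears. (Your explicit restriction to $\delta(S)\ge 2d$ to absorb the $-d$ is at least as careful as the paper, which silently drops that term; in the intended application $\delta$ grows with $n$, so neither version loses anything.)
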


\begin{proof}
  Let $S_{\max}$ be the largest chromatic class of $S$.
  Consider a subset $X$ of $d-1$ points of $S_{\max}$.
  From the requirements of the lemma we have $d\geq3$, $|S_{\max}|\geq
  \left\lceil\frac{n}{2}\right\rceil > d$, and $|X|= d-1$.
  Thus we may apply Lemma~\ref{lem:starset2r} to $X$ which guarantees
  the existence of a $d$-simplicial complex $\mathcal{K}_X$ with
  vertex set $S_{\max}$, such that $\mathcal{K}_X$ has size at least
  $|S_{\max}| - d$ and all $d$-simplices of $\mathcal{K}_X$ have $X$
  in their vertex set.
  Since every point of $S \setminus S_{\max}$ is in at most one
  $d$-simplex of $\mathcal{K}_X$, $\mathcal{K}_X$ contains at least
  $\delta(S)-d$ empty monochromatic $d$-simplices.

  We do this counting for each of the $\binom{|S_{\max}|}{d-1}$
  subsets of $(d-1)$ points of $S_{\max}$, and over-count each empty
  monochromatic $d$-simplex at most $\binom{d+1}{d-1}=\binom{d+1}{2}$
  times.
  Hence, in total we get
  $\frac{\binom{|S_{\max}|}{d-1}}{\binom{d+1}{2}} \cdot \delta(S)$
  empty monochromatic $d$-simplices.
  As $|S_{\max}|\geq \left\lceil\frac{n}{2}\right\rceil$, and $d$ is
  constant w.r.t. $n$, we get $\Omega{\left(n^{d-1}\cdot \delta(S)
    \right)}$ empty monochromatic $d$-simplices in $S$.
\end{proof}

The still missing $3$-colored case of the ``Discrepancy Lemma'' turns
out to be quite difficult. In the remaining three lemmas of this
section we will first prove the variant for $\mathbb{R}^3$, then give
a general bound for $\mathbb{R}^d$ and $d > 4$, and lastly providing
the missing case of $\mathbb{R}^4$.

\begin{lemma}\label{lem:discr3d} 
  Let $S$ be a $3$-colored set of $n\geq 12$ points in general
  position in $\mathbb{R}^3$. Then $S$ determines at least
  $\frac{\delta(S)-10}{12}\cdot n +3$ empty monochromatic
  $3$-simplices.
\end{lemma}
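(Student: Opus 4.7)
The plan is to pivot on each point of the largest color class $S_{\max}$ and apply the star-shaped triangulation result of Lemma~\ref{lem:starp3d} to $S_{\max}$ alone, then count empty monochromatic tetrahedra and average. Set $m=|S_{\max}|=(n+\delta(S))/3$ and let $h$ be the number of vertices of $\CH(S_{\max})$. Since $n\geq12$ forces $m\geq4$ and $S$ is in general position, $\CH(S_{\max})$ is a simplicial $3$-polytope, so by Euler's formula (equivalently, Theorem~\ref{thm:lowerbound} for $d=3$ combined with $2e=3f_2$) the exact number of edges of $\CH(S_{\max})$ equals $e=3h-6$.

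For each $p\in S_{\max}$, Lemma~\ref{lem:starp3d} produces a triangulation $\mathcal{T}_p$ of $S_{\max}$ containing at least $2m-6$ tetrahedra incident to $p$ when $p$ is interior to $S_{\max}$, and at least $2m-\varrho(p)-4$ when $p$ lies on $\CH(S_{\max})$ (with $\varrho(p)$ its degree in the $1$-skeleton of $\CH(S_{\max})$). Each such tetrahedron is monochromatic because all its vertices lie in $S_{\max}$. Since $\mathcal{T}_p$ triangulates $\Conv(S_{\max})$, each point of $S\setminus S_{\max}$ that lies in $\Conv(S_{\max})$ sits in exactly one tetrahedron of $\mathcal{T}_p$; hence at most $n-m$ of the tetrahedra incident to $p$ fail to be empty. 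Using $3m-n=\delta(S)$, the number of empty monochromatic tetrahedra in $\mathcal{T}_p$ containing $p$ is at least $\delta(S)-6$ for an interior $p$ and at least $\delta(S)-\varrho(p)-2$ for a hull $p$. Summing over $p\in S_{\max}$ and using $\sum_{p\text{ on hull}}\varrho(p)=2e$, the total is at least
\[
(m-h)(\delta(S)-6)+h\,\delta(S)-2e-2h \;=\; m\,\delta(S)-6m+4h-2e,
\]
which, after substituting $e=3h-6$ and using $h\leq m$, reduces to $m\,\delta(S)-6m-2h+12\geq m(\delta(S)-8)+12$.

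Each empty monochromatic tetrahedron has all four vertices in $S_{\max}$ and is counted at most four times in this sum, once per vertex. Dividing by $4$ and substituting $m=(n+\delta(S))/3$ yields the lower bound
\[
\frac{(n+\delta(S))(\delta(S)-8)}{12}+3 \;\geq\; \frac{(\delta(S)-10)\,n}{12}+3,
\]
where the last inequality rearranges to $\delta(S)^2-8\delta(S)+2n\geq0$, a quadratic in $\delta(S)$ whose discriminant $64-8n$ is negative for $n\geq 9$; it is therefore non-negative for every $\delta(S)\geq0$. The main obstacle is the hull-point correction: the weaker count $2m-\varrho(p)-4$ at hull points of $S_{\max}$ introduces a $-\sum\varrho(p)$ contribution that would spoil the leading coefficient $1/12$ were it not for the exact identity $e=3h-6$, peculiar to simplicial $3$-polytopes, which cleanly cancels the linear-in-$h$ terms; this is precisely why the forthcoming cases in higher dimensions will need an extra argument.
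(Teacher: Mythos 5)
Your argument is the paper's own proof almost verbatim: pivot on every point $p$ of $S_{\max}$, invoke Lemma~\ref{lem:starp3d}, subtract at most $n-|S_{\max}|$ blocked tetrahedra, sum using $\sum\varrho(p)=2e=6h-12$, and divide by $4$ for overcounting. There is one arithmetic slip: for a hull point $p$ the count is $2m-\varrho(p)-4-(n-m)=\delta(S)-\varrho(p)-4$, not $\delta(S)-\varrho(p)-2$, so your total should read $(m-h)(\delta(S)-6)+h\,\delta(S)-2e-4h=m\,\delta(S)-6m-4h+12\geq m(\delta(S)-10)+12$ rather than $m(\delta(S)-8)+12$. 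After dividing by $4$ this is exactly the paper's $\frac{\delta(S)-10}{4}|S_{\max}|+3$, and with $|S_{\max}|\geq n/3$ the claimed bound follows in the only regime where it is nontrivial, namely $\delta(S)\geq 10$ (for smaller $\delta(S)$ the substitution $|S_{\max}|\geq n/3$ points the wrong way, a detail the paper glosses over as well); your closing quadratic-discriminant step, which compensates for the inflated $\delta(S)-8$, is then unnecessary. Your observation that the identity $e=3h-6$ is what rescues the hull-point correction, and that this is special to $d=3$, is accurate and matches why the paper needs separate arguments for $d=4$ and $d>4$.
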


\begin{proof}
  Let $S_{\max}$ be the largest chromatic class of $S$.
  Let $p$ be a point of $S_{\max}$.
  From the requirements of the lemma we have $d=3$ and $|S_{\max}|\geq
  \left\lceil\frac{n}{3}\right\rceil \geq 4$.
  Thus we may apply Lemma~\ref{lem:starp3d} to $p$ which guarantees
  the existence of a $3$-simplicial complex $\mathcal{K}_p$ with
  vertex set $S_{\max}$, such that all $3$-simplices of
  $\mathcal{K}_p$ have $p$ as a vertex, and $\mathcal{K}_p$ has size
  at least
  \begin{itemize}
  \item $2|S_{\max}| - 6$ if $p$ is an interior point of $S_{\max}$
    and
  \item $2|S_{\max}| - \varrho(p)-4$ if $p$ is a convex hull point of
    $S_{\max}$ and $\varrho(p)$ is the degree of $p$ in the 1-skeleton
    of $\CH{(S_{\max})}$.
  \end{itemize}
  Since every point of $S \setminus S_{\max}$ is in at most one
  $3$-simplex of $\mathcal{K}_p$, $\mathcal{K}_p$ contains at least
  $\delta(S)-6$ empty monochromatic $d$-simplices if $p$ is an
  interior point of $S_{\max}$, and $\delta(S)-\varrho(p)-4$ empty
  monochromatic $d$-simplices if $p$ is a convex hull point of
  $S_{\max}$.

  We do this counting for each point in $S_{\max}$, and over-count each
  empty monochromatic $3$-simplex at most $4$ times.
  Denote with $h$ the number of convex hull points of $S_{\max}$. We
  know from Theorem~\ref{thm:lowerbound} that summing over all convex
  hull points of $S_{\max}$ we have
  $\sum{\varrho(p)}=2\cdot(3h-6)=6h-12$.
  Hence, in total we get
  $\frac{1}{4}\cdot \left( (\delta(S)-6)\cdot(|S_{\max}|-h)+
    (\delta(S)-4)\cdot h - \sum{\varrho(p)} \right) =
  \frac{1}{4}\cdot \left( (\delta(S)-6)\cdot|S_{\max}| - 4h + 12
  \right) \geq \frac{\delta(S)-10}{4}\cdot|S_{\max}| + 3$ empty
  monochromatic $3$-simplices.
  As $|S_{\max}|\geq \left\lceil\frac{n}{3}\right\rceil$, we get at
  least $\frac{\delta(S)-10}{12}\cdot n + 3$ empty monochromatic
  $3$-simplices in $S$.
\end{proof}

\begin{lemma}\label{lem:discr_k3_d5+}
  Let $S$ be a 3-colored set of $n>3d+15$ points in general
  position in $\mathbb{R}^d$ $(d>4)$.
  Then $S$ determines $\Omega(n^{d-2}\cdot\delta(S))$ empty
  monochromatic $d$-simplices.
\end{lemma}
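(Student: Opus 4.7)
The plan is to follow the same blueprint as Lemma~\ref{lem:discrgen} and Lemma~\ref{lem:discr2colors}, but to use Lemma~\ref{lem:starset_d-2} as the incidence tool; this necessitates a separate bookkeeping of the subsets $X$ that satisfy its hypothesis. Let $S_{\max}$ be the largest color class; since $n>3d+15$ we have $|S_{\max}|\geq\lceil n/3\rceil>d+5$, so Lemma~\ref{lem:starset_d-2} may be applied to $S_{\max}$ in the role of $S$. I will range over all $(d-2)$-element subsets $X\subset S_{\max}$ and call $X$ \emph{good} if, under the orthogonal projection of $S_{\max}$ to a $3$-dimensional hyperplane $\Pi'$ orthogonal to the affine hull of $X$, the image $p_X$ of $X$ is an interior point. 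For each good $X$, Lemma~\ref{lem:starset_d-2} produces a $d$-dimensional simplicial complex $\mathcal{K}_X$ on $S_{\max}$ of size at least $2|S_{\max}|-2d-8$, every $d$-simplex of which has $X$ as a vertex subset.

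Because the $d$-simplices of $\mathcal{K}_X$ all share the $(d-3)$-face $\Conv(X)$, they are pairwise interior-disjoint, and general position of $S$ implies that every $q\in S\setminus S_{\max}$ lies in the interior of at most one of them. Hence $\mathcal{K}_X$ contains at least
\begin{equation*}
(2|S_{\max}|-2d-8)-(n-|S_{\max}|)=3|S_{\max}|-n-2d-8=\delta(S)-2d-8
\end{equation*}
empty monochromatic $d$-simplices. Summing over all good $X$ and dividing by the over-count factor $\binom{d+1}{d-2}=\binom{d+1}{3}$ (the number of $(d-2)$-subsets of the vertex set of a single $d$-simplex) will yield the desired lower bound, provided the number of good $X$ is $\Omega(n^{d-2})$.

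To control the good subsets I invoke Lemma~\ref{lem:extremeprojection} applied to $S_{\max}$: a subset $X$ fails to be good precisely when $\Conv(X)$ is a $(d-3)$-dimensional facet of $\CH(S_{\max})$. By the Upper Bound Theorem for simplicial $d$-polytopes, the number of such faces is $O(|S_{\max}|^{\lfloor d/2\rfloor})$ (e.g.\ via the bound $f_{d-3}\leq\binom{d}{d-2}f_{d-1}$ combined with the cyclic-polytope estimate on $f_{d-1}$). Since $d>4$ gives $\lfloor d/2\rfloor\leq d-3<d-2$, this is of strictly smaller order than the total $\binom{|S_{\max}|}{d-2}=\Theta(n^{d-2})$, so $\Omega(n^{d-2})$ subsets are good. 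Combining the estimates we obtain $\Omega(n^{d-2}(\delta(S)-2d-8))=\Omega(n^{d-2}\delta(S))$ empty monochromatic $d$-simplices as soon as $\delta(S)$ exceeds the constant $2d+8$; for smaller $\delta(S)$ the target bound is at most a constant multiple of $n^{d-2}$ and may be absorbed into the implicit constant of the $\Omega$-notation. The main delicate point I expect is precisely this reliance on an Upper-Bound-type estimate for $(d-3)$-faces, which was not introduced earlier in the paper, and the exponent comparison is exactly what excludes $d=4$ (where $\lfloor d/2\rfloor=d-2$) and forces that case to be handled separately.
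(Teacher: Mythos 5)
Your proposal is correct and follows essentially the same route as the paper's proof: project out a $(d-2)$-subset $X$ of $S_{\max}$, use Lemma~\ref{lem:extremeprojection} plus the Upper Bound Theorem to show that all but $O(n^{\lfloor d/2\rfloor})=o(n^{d-2})$ of the $\binom{|S_{\max}|}{d-2}$ subsets project to interior points (this is exactly where $d>4$ enters), apply Lemma~\ref{lem:starset_d-2} to each good $X$ to harvest $\delta(S)-2d-8$ empty monochromatic simplices, and divide by the over-count factor $\binom{d+1}{d-2}$. The only loose end, the regime $\delta(S)=O(1)$ where the per-subset count may be nonpositive, is present in the paper's own proof as well and is harmless in the lemma's sole application, where $\delta$ grows with $n$.
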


\begin{proof}
  Let $S_{\max}$ be the largest chromatic class of $S$.
  Consider a subset $X$ of \mbox{$d\!-\!2$} points of~$S_{\max}$. Note
  that $|S_{\max}|\geq \left\lceil\frac{n}{3}\right\rceil$.
  Denote with $\Pi$ the \mbox{$(d\!-\!3)$-dimensional} hyperplane
  containing $X$ and with $\Pi'$ a $3$-dimensional hyperplane
  orthogonal to $\Pi$.
  Project $S_{\max}$ orthogonally to $\Pi'$, and let $S_{\max}'$ be
  the resulting image. The set $X$ is projected to a single point
  $p_X$ in $\Pi'$.

  By Lemma~\ref{lem:extremeprojection}, $p_X$ is an extremal point of
  $S_{\max}'$ only if $\Conv(X)$ is a \mbox{$(d\!-\!3)$-dimensional}
  facet of $\CH(S_{\max})$.
  By the upper bound theorem \cite{upperbound}, the convex hull of a point set in
  $\mathbb{R}^d$ has size at most
  $\Theta(n^{\lfloor\frac{d}{2}\rfloor})$. 
  Obviously, this bound applies to the number of all $\xi$-dimensional
  facets, $1\leq\xi<d$, of $\CH(S_{\max})$, as $d$ is constant; i.e.,
  independent of~$|S_{\max}|$.

  On the other hand, the total number of different subsets of
  \mbox{$d\!-\!2$} points of $S_{\max}$ is $\binom{|S_{\max}|}{d-2}
  \geq \binom{\frac{n}{3}}{d-2} = \Theta(n^{d-2})$.
  As $d-2 > \lfloor\frac{d}{2}\rfloor$ for $d>4$, there exist
  $\Theta(n^{d-2})-\Theta(n^{\lfloor\frac{d}{2}\rfloor}) =
  \Theta(n^{d-2})$ different subsets $X$, such that $p_X$ is an
  interior point of $S_{\max}'$.

  For each such subset $X$ apply Lemma~\ref{lem:starset_d-2}, as
  $|S_{\max}|\geq \left\lceil\frac{n}{3}\right\rceil > d+5$ and $d>3$.
  This guarantees for each $X$ the existence of a $d$-simplicial
  complex $\mathcal{K}_X$ with vertex set $S_{\max}$, such that
  $\mathcal{K}_X$ has size at least $2|S_{\max}|-2d-8$ and all
  $d$-simplices of $\mathcal{K}_X$ have $X$ in their vertex set.
  Since every point of $S \setminus S_{\max}$ is in at most one
  $d$-simplex of $\mathcal{K}_X$, $\mathcal{K}_X$ contains at least
  $\delta(S)-2d-8$ empty monochromatic $d$-simplices.

  As we can do this counting for $\Theta(n^{d-2})$ different subsets,
  and over-count each empty monochromatic $d$-simplex at most
  $\binom{d+1}{d-2}$ times, we get at least $\Theta(n^{d-2})\cdot
  (\delta(S)-2d-8)$ empty monochromatic $d$-simplices in total.
\end{proof}

For $\mathbb{R}^4$ the simple asymptotic counting from the previous
proof does not work. We have to take a more detailed look.

\begin{lemma}\label{lem:discr_k3_d4}
  Let $S$ be a 3-colored set of $n>27$ points in general
  position in $\mathbb{R}^4$.
  Then $S$ determines $\Omega(n^{2}\cdot\delta(S))$ empty
  monochromatic $4$-simplices.
\end{lemma}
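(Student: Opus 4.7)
My plan is to adapt the argument of Lemma~\ref{lem:discr_k3_d5+} to the boundary case $d=4$, for which its averaging step breaks down. Let $S_{\max}$ be the largest color class, so that $|S_{\max}|\geq\lceil n/3\rceil$, and iterate over the $\binom{|S_{\max}|}{2}$ pairs $X\subset S_{\max}$ of size $d-2=2$. For each such $X$, let $\Pi'$ be a $3$-dimensional hyperplane orthogonal to the line through $X$, let $S_{\max}'$ be the orthogonal projection of $S_{\max}$ onto $\Pi'$ (so $|S_{\max}'|=|S_{\max}|-1$), and let $p_X$ be the common image of the two points of $X$. I would apply Lemma~\ref{lem:starp3d} to $p_X$ in $S_{\max}'$ to obtain a $3$-dimensional simplicial complex all of whose $3$-simplices contain $p_X$, then lift this complex along the projection to a $4$-dimensional simplicial complex $\mathcal{K}_X$ on $S_{\max}$ whose $4$-simplices all contain $X$. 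Exactly as in the proof of Lemma~\ref{lem:discr_k3_d5+}, interior-disjointness yields at least $|\mathcal{K}_X|-|S\setminus S_{\max}|$ empty monochromatic $4$-simplices per pair; summing over all pairs and dividing by the over-count $\binom{d+1}{d-2}=\binom{5}{2}=10$ would give the lemma.

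The main obstacle, and the reason the simpler counting of Lemma~\ref{lem:discr_k3_d5+} does not go through, is that for $d=4$ we have $d-2=\lfloor d/2\rfloor=2$. By the Upper Bound Theorem the number of $1$-faces of $\CH(S_{\max})$ can be $\Theta(|S_{\max}|^2)$, which matches the total number of pairs. By Lemma~\ref{lem:extremeprojection}, a pair $X$ is extremal in $S_{\max}'$ precisely when it is a $1$-face of $\CH(S_{\max})$; so the extremal pairs, on which Lemma~\ref{lem:starp3d} only yields a weaker complex of size $\geq 2|S_{\max}'|-\varrho(p_X)-4$, may constitute a constant fraction of all pairs and cannot simply be discarded. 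Their contribution $\delta(S)-\varrho(p_X)-6$ to the empty-monochromatic count can even be negative when $\varrho(p_X)$ is large.

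To salvage the counting, I would exploit the following geometric fact about the projection: each $2$-face of $\CH(S_{\max})$ that contains the edge $X$ projects to an edge of $\CH(S_{\max}')$ incident to $p_X$, and this correspondence is a bijection. Consequently $\varrho(p_X)$ equals the number of $2$-faces of $\CH(S_{\max})$ containing $X$; summing over all edges $X$ of $\CH(S_{\max})$ and using that each $2$-face is a triangle, we get $\sum_{X}\varrho(p_X)=3f_2(\CH(S_{\max}))$. By the Upper Bound Theorem, $f_2\leq n(n-3)=O(n^2)$ for a simplicial $4$-polytope, so $\sum_X\varrho(p_X)=O(n^2)$. A Markov-type averaging then shows that for any threshold $M$, at most $O(n^2/M)$ pairs have $\varrho(p_X)>M$; choosing $M$ a sufficiently large absolute constant ensures that at least half of the $\binom{|S_{\max}|}{2}=\Theta(n^2)$ pairs satisfy $\varrho(p_X)\leq M$ and therefore contribute at least $\delta(S)-M-6$ empty monochromatic $4$-simplices each. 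Summing and dividing by $10$ gives $\Omega(n^2)\cdot(\delta(S)-O(1))$ empty monochromatic $4$-simplices, i.e., $\Omega(n^2\cdot\delta(S))$ whenever $\delta(S)$ exceeds the absorbed constant (and the bound is trivial otherwise).
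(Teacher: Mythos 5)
Your proof is correct, but it resolves the critical difficulty by a genuinely different device than the paper. Both arguments share the diagnosis: for $d=4$ the hull $\CH(S_{\max})$ may have $\Theta(n^2)$ edges, so the extremal pairs $X$ (for which Lemma~\ref{lem:extremeprojection} places $p_X$ on $\CH(S_{\max}')$ and Lemma~\ref{lem:starp3d} only guarantees $2|S_{\max}'|-\varrho(p_X)-4$ simplices) can form a constant fraction of all pairs and cannot simply be discarded as in Lemma~\ref{lem:discr_k3_d5+}. The paper handles this by a case distinction on $f_1(\CH(S_{\max}))$: if it is subquadratic, a positive fraction of pairs are non-extremal and Lemma~\ref{lem:starset_d-2} applies to those; if it is quadratic, the paper abandons pairs entirely and uses that $\CH(S_{\max})$ then has $\Theta(n^2)$ facets, so a pulling triangulation from a single point $p$ already yields $\Theta(n^2)$ interior-disjoint $4$-simplices at $p$, overwhelming the $O(n)$ points of the other colors and giving $\Omega(n^3)$ empty monochromatic simplices in that case. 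You instead keep the pair-based count uniform and tame the extremal pairs via the identity $\varrho(p_X)=\#\{\text{$2$-faces of }\CH(S_{\max})\text{ containing }X\}$ --- which is correct: projecting along the edge $X$ realizes the edge figure of $X$ as the vertex figure of $p_X$, and general position (no four points of $S_{\max}$ affinely dependent) makes the correspondence injective --- whence $\sum_X\varrho(p_X)=3f_2=O(n^2)$ and a Markov argument leaves $\Theta(n^2)$ pairs with $\varrho(p_X)=O(1)$. This is arguably cleaner and more unified; what it forgoes is the stronger $\Omega(n^3)$ bound the paper gets in its second case, which is irrelevant to how the lemma is applied. One caveat: your final bound $\Omega(n^2)\cdot(\delta(S)-O(1))$ does not literally give $\Omega(n^2\cdot\delta(S))$ when $1\leq\delta(S)=O(1)$, and ``the bound is trivial otherwise'' is not actually justified; however, the paper's own first case (and Lemma~\ref{lem:discr_k3_d5+}) concludes with exactly the same form $\Theta(n^2)\cdot(\delta(S)-16)$, and the lemma is only ever invoked with $\delta(S)=\Omega(n^{2^{-d}})$, so your proof is no weaker than the paper's on this point.
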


\begin{proof}
  Let $S_{\max}$ be the largest chromatic class of $S$. Note that
  $|S_{\max}|\geq \left\lceil\frac{n}{3}\right\rceil$.
  Recall that the size of $\CH(S_{\max})$ is bound by
  $O(|S_{\max}|^{\lfloor\frac{4}{2}\rfloor}) =
  O(|S_{\max}|^{2})$. Thus, there are also at most quadratically many
  edges on $\CH(S_{\max})$.
  We distinguish two cases depending on the number of edges on
  $\CH(S_{\max})$.

  \begin{enumerate}
  \item[1)] If less than quadratically many edges are on
    $\CH(S_{\max})$, then there exist $\Theta(|S_{\max}|^{2})$ many
    edges that are no $1$-dimensional facet of $\CH(S_{\max})$.
    Consider a subset $X$ of $2$ points of~$S_{\max}$, spanning such
    an edge.
    Denote with $\Pi$ the line containing $X$ and with $\Pi'$ a
    $3$-dimensional hyperplane orthogonal to $\Pi$.
    Project $S_{\max}$ orthogonally to $\Pi'$, and let $S_{\max}'$ be
    the resulting image. The set $X$ is projected to a single point
    $p_X$ in $\Pi'$.
    By Lemma~\ref{lem:extremeprojection}, $p_X$ is an interior point
    of $S_{\max}'$, as $\Conv(X)$ is not an edge of $\CH(S_{\max})$.
    Apply Lemma~\ref{lem:starset_d-2} to $X$, as $|S_{\max}|\geq
    \left\lceil\frac{n}{3}\right\rceil > d+5$ and $d=4>3$.
    This guarantees for $X$ the existence of a $4$-simplicial complex
    $\mathcal{K}_X$ with vertex set $S_{\max}$, such that
    $\mathcal{K}_X$ has size at least $2|S_{\max}|-16$ and all
    $4$-simplices of $\mathcal{K}_X$ have $X$ in their vertex set.
    Since every point of $S \setminus S_{\max}$ is in at most one
    $4$-simplex of $\mathcal{K}_X$, $\mathcal{K}_X$ contains at least
    $\delta(S)-16$ empty monochromatic $4$-simplices.
    As we can do this counting for $\Theta(|S_{\max}|^{2}) =
    \Theta(n^{2})$ different subsets, and over-count each empty
    monochromatic $4$-simplex at most $\binom{5}{2}$ times, we get at
    least $\Theta(n^{2})\cdot (\delta(S)-16)$ empty monochromatic
    $4$-simplices in total.

  \item[2)] If there are $\Theta(|S_{\max}|^2)$ many edges on
    $\CH(S_{\max})$, then there are also $\Theta(|S_{\max}|^2)$ many
    tetrahedra on $\CH(S_{\max})$ (because the number of tetrahedra is
    at least a sixth of the number of edges), and obviously
    $|S_{\max}\cap\CH(S_{\max})| = \Theta(|S_{\max}|)$.
    For a point $p\in S_{\max}$ make a pulling triangulation
    $\mathcal{K}_p$ of
    $(S_{\max}\cap\CH(S_{\max}))\cup\{p\}$. Inserting the remaining
    points of $S_{\max}$ into $\mathcal{K}_p$ does not decrease the
    number of 4-simplices in $\mathcal{K}_p$, which have $p$ as a
    vertex. Remove all 4-simplices from $\mathcal{K}_p$ that don't
    have $p$ as a vertex. Then $\mathcal{K}_p$ is a 4-dimensional
    simplicial complex, such that every 4-simplex has $p$ as a vertex
    and $\mathcal{K}_p$ is of size
    \begin{enumerate}
    \item[a)] $\Theta(|S_{\max}|^2)$, if $p$ is an interior point of
      $S_{\max}$, or
    \item[b)] $\Theta(|S_{\max}|^2)-\varrho(p)$, if $p$ is an extremal
      point of $S_{\max}$, where $\varrho(p)$ is the number of
      tetrahedra in $CH(S_{\max})$, having $p$ as a vertex.
    \end{enumerate}
    For case b) observe, that
    $\sum_{p\in(S_{\max}\cap\CH(S_{\max}))}{\varrho(p)} =
    4\cdot\Theta(|S_{\max}|^2)$. Thus on average, at least
    $\Omega(|S_{\max}|)$ points of $S_{\max}$ have at most
    $O(|S_{\max}|)$ incident tetrahedra in $CH(S_{\max})$. Hence,
    $\Theta(|S_{\max}|^2)-\varrho(p) = \Theta(|S_{\max}|^2)$ for
    $\Theta(|S_{\max}|)$ points $p\in S_{\max}$.

    All 4-simplices of $\mathcal{K}_p$ are empty of points of
    $S_{\max}$ by construction. Since every point of $S \setminus
    S_{\max}$ is in at most one $4$-simplex of $\mathcal{K}_p$,
    $\mathcal{K}_p$ contains at least $\Theta(|S_{\max}|^2) -
    2|S_{\max}|+\delta(S)-2d-8 = \Theta(n^2)$ empty monochromatic
    $4$-simplices. Note that $\delta(S)=O(n)$.
    As we can do this counting for $\Theta(|S_{\max}|) = \Theta(n)$
    different points, and over-count each empty monochromatic
    $4$-simplex at most $5$ times, we get $\Omega(n^{3})\geq
    \Omega(n^{2}\cdot\delta(S))$ empty monochromatic $4$-simplices in
    total.
  \end{enumerate}\vspace{-8mm}
\end{proof}

With this last lemma in a line of five lemmas in total and
including~\cite{triangmonojournal}, we now have a ``Discrepancy
Lemma'' type of statement for all $k$-colored point sets in
$\mathbb{R}^d$, for every combination of $d\geq2$ and $2\leq k\leq d$.

\section{Empty Monochromatic Simplices in $k$-Colored Point Sets}
\label{sec:ems}

In this section we present our results on the minimum number of empty
monochromatic $d$-sim\-plices determined by any $k$-colored set of $n$
points in general position in $\mathbb{R}^d$.
Some first bounds follow directly from the results in the previous
section.

\begin{theorem}\label{thm:(d+1)}
  Every \mbox{$(d\!+\!1)$-colored} set $S$ of $n \geq
  (d+1)\cdot4^{d(c_d+1)}$ points in general position in $\mathbb{R}^d$
  $(d>2)$, $c_d$ defined as in Lemma~\ref{lem:convtriang}, determines
  an empty monochromatic $d$-simplex.
\end{theorem}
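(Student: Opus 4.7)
The plan is to apply Theorem~\ref{thm:dn+log} to the largest color class of $S$ and observe that its guaranteed ``logarithmic surplus'' is enough to compensate for the few points of other colors that can spoil monochromatic simplices.

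Let $S_{\max}$ be a largest color class of $S$. With $d+1$ color classes, pigeonhole gives $|S_{\max}| \geq \lceil n/(d+1)\rceil$, and the hypothesis $n \geq (d+1)\cdot 4^{d(c_d+1)}$ thus forces $|S_{\max}| \geq 4^{d(c_d+1)}$. A short arithmetic check using $c_d = d^3+d^2+d$ shows $d(c_d+1) > d^2(d+1)$, so in particular $|S_{\max}| > 4^{d^2(d+1)}$, which is exactly the applicability threshold of Theorem~\ref{thm:dn+log}. Applying that theorem to $S_{\max}$ yields a triangulation $\mathcal{T}_{\max}$ of $S_{\max}$ of size at least
\[
d\,|S_{\max}| + \frac{\log_2 |S_{\max}|}{2d} - c_d.
\]
Every $d$-simplex of $\mathcal{T}_{\max}$ has all its vertices in $S_{\max}$, hence is monochromatic, and by definition of a triangulation contains no point of $S_{\max}$ in its interior.

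For such a simplex to be empty with respect to all of $S$, it must in addition avoid all points of $S \setminus S_{\max}$ in its interior. Since the $d$-simplices of $\mathcal{T}_{\max}$ have pairwise disjoint interiors, each point of $S \setminus S_{\max}$ spoils at most one of them. Hence the number of empty monochromatic $d$-simplices obtained this way is at least
\[
d\,|S_{\max}| + \frac{\log_2 |S_{\max}|}{2d} - c_d - (n - |S_{\max}|)
= (d+1)|S_{\max}| - n + \frac{\log_2 |S_{\max}|}{2d} - c_d.
\]
The first two terms are nonnegative because $|S_{\max}| \geq n/(d+1)$; and from $|S_{\max}| \geq 4^{d(c_d+1)}$ we get $\log_2 |S_{\max}| \geq 2d(c_d+1)$, so $\tfrac{\log_2 |S_{\max}|}{2d} \geq c_d+1$. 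The total is therefore at least $1$, yielding the theorem.

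The main (and essentially only) subtlety will be verifying that the prescribed threshold on $n$ simultaneously pushes $|S_{\max}|$ past the applicability threshold of Theorem~\ref{thm:dn+log} and makes the logarithmic slack $\tfrac{\log_2 |S_{\max}|}{2d}$ strictly exceed the additive constant $c_d$ even after the $(n-|S_{\max}|)$ losses are subtracted. Both requirements reduce to the inequality $d(c_d+1) > d^2(d+1)$, which holds trivially for $d \geq 1$.
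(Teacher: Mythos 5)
Your proof is correct and follows essentially the same route as the paper: triangulate the largest color class via Theorem~\ref{thm:dn+log}, note that each of the at most $n-|S_{\max}|\leq d\,|S_{\max}|$ points of other colors spoils at most one simplex, and observe that the logarithmic surplus exceeds $c_d$ by at least $1$. The only cosmetic difference is that the paper bounds the spoilers directly by $d\,|S_{\max}|$ rather than writing the slack as $(d+1)|S_{\max}|-n\geq 0$, which is the same estimate.
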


\begin{proof}
  Let $S_{\max}$ be the largest chromatic class of $S$.
  From the requirements of the theorem we have $d>2$ and $|S_{\max}|\geq
  \left\lceil\frac{n}{d+1}\right\rceil \geq 4^{d(c_d+1)} =
  4^{d^4+d^3+d^2+d} > 4^{d^2(d+1)}$.
  By Theorem~\ref{thm:dn+log}, $S_{\max}$ has a triangulation
  $\mathcal{T}$ of size at least
  $d|S_{\max}|+\frac{\log_2{|S_{\max}|}}{2d}-c_d$. All the
  $d$-simplices of $\mathcal{T}$ are of the same color and empty of
  points of $S_{\max}$. There are at most $d|S_{\max}|$ points in $S$
  of the remaining colors, and each of these points is in at most one
  $d$-simplex of $\mathcal{T}$.

  Therefore, at least $\frac{\log_2{|S_{\max}|}}{2d}-c_d \geq
  \frac{2d(c_d+1)}{2d}-c_d=1$ of the $d$-simplices of $\mathcal{T}$ are
  empty of points of $S$.
\end{proof}

Note that \mbox{$d>2$} is crucial here, as for \mbox{$d=2$} Devillers
et al.~\cite{chromaticvariants} showed that there are arbitrarily
large 3-colored sets which do not contain an empty monochromatic
triangle.

As an immediate corollary of Theorem~\ref{thm:(d+1)} we have:

\begin{corollary}\label{cor:(d+1)}
  Every \mbox{$(d\!+\!1)$-colored} set $S$ of $n \geq
  (d+1)\cdot4^{d(c_d+1)}$ points in general position in $\mathbb{R}^d$
  $(d>2)$, $c_d$ defined as in Lemma~\ref{lem:convtriang}, determines
  at least a linear number of empty monochromatic $d$-simplices.
\end{corollary}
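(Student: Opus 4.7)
The plan is to iterate Theorem~\ref{thm:(d+1)}. Set $S_0 := S$ and, while $|S_i| \geq C := (d+1)\cdot 4^{d(c_d+1)}$, apply the theorem to $S_i$ to obtain an empty monochromatic $d$-simplex $\sigma_i$; remove one vertex $v_i$ of $\sigma_i$ to form $S_{i+1} := S_i \setminus \{v_i\}$. Because $\sigma_i$ has $d+1$ vertices of a single color, removing one of them leaves every color class non-empty, so the theorem's hypotheses are preserved and the iteration lasts $n - C = \Omega(n)$ steps, producing $\Omega(n)$ candidate simplices.

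Two things then need to be verified: that the $\sigma_i$ are pairwise distinct, and that each is empty with respect to the original set $S$ (not merely with respect to $S_i$). Distinctness is automatic: for $j > i$, the simplex $\sigma_j$ has all of its vertices in $S_j \subseteq S_{i+1}$, which no longer contains $v_i$, so $\sigma_j \neq \sigma_i$. For emptiness in $S$, the idea is to pick $v_i$ so that it is simultaneously a vertex of $\sigma_i$ \emph{and} a convex hull vertex of $S_i$. Then $v_i \notin \Conv(S_{i+1})$, and inductively $v_i \notin \Conv(S_j)$ for all $j > i$. Since $\sigma_j \subseteq \Conv(S_j)$, none of the previously removed $v_0, \dots, v_{j-1}$ can lie in the interior of $\sigma_j$, and combined with $\sigma_j$ being empty of points of $S_j$ this forces $\sigma_j$ to be empty in $S$ as well.

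The main obstacle is guaranteeing this combined choice of $v_i$ at every step: the empty monochromatic simplex supplied by Theorem~\ref{thm:(d+1)} must have a vertex on $\CH(S_i)$. The proof of that theorem extracts $\sigma_i$ from a triangulation of the largest color class $S_{\max}^{(i)} \subseteq S_i$, whose boundary simplices only need to touch $\CH(S_{\max}^{(i)})$, and a vertex of $\CH(S_{\max}^{(i)})$ may well sit inside $\Conv(S_i)$ because of off-color points. I would bypass this obstacle with a case split on the discrepancy. If $|S_{\max}| \geq \frac{n}{d+1} + \varepsilon n$ for some fixed $\varepsilon > 0$ depending only on $d$, the counting inside the proof of Theorem~\ref{thm:(d+1)} already produces $(d+1)|S_{\max}| - n + \Omega(\log n) - c_d = \Omega(n)$ empty monochromatic simplices directly and the iteration is unnecessary. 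In the complementary near-balanced regime every color class has $\Theta(n)$ points, and a recursive balanced hyperplane partition -- using ham-sandwich to bisect $d$ of the color classes at each cut -- produces $\Omega(n)$ convex-hull-disjoint, $(d+1)$-colored subsets each of bounded size; applying Theorem~\ref{thm:(d+1)} to each such subset yields a distinct empty monochromatic $d$-simplex that is automatically empty in $S$ by convex hull disjointness.
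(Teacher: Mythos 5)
Your eventual argument (the discrepancy case split) is essentially correct, but it reaches the goal by a much heavier route than the paper, and the detour comes from worrying about a hypothesis of Theorem~\ref{thm:(d+1)} that its proof never really uses. The paper's proof is a two-line partition argument: order $S$ along a generic direction and cut it by parallel $(d-1)$-dimensional hyperplanes into $\lfloor n/\mu_d\rfloor$ consecutive groups of $\mu_d=(d+1)\cdot 4^{d(c_d+1)}$ points each. Every group lies in its own convex slab, so the empty monochromatic simplex that Theorem~\ref{thm:(d+1)} produces inside a group is contained in that slab and is therefore empty with respect to all of $S$, and simplices from different slabs are automatically distinct. The point you missed is that no color balancing of the pieces is required: the proof of Theorem~\ref{thm:(d+1)} only uses that the largest color class of the piece contains at least a $\frac{1}{d+1}$ fraction of its points, which holds by pigeonhole for any subset colored with at most $d+1$ colors, however skewed. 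This makes your ham-sandwich recursion (and your case~1, which it subsumes) unnecessary. Your case~2 also contains two inaccurate claims, though they happen to be harmless: since a ham-sandwich cut in $\mathbb{R}^d$ only bisects $d$ of the $d+1$ classes, the resulting cells are neither of bounded size nor necessarily $(d+1)$-colored (one cell may swallow the entire uncontrolled class while another misses it completely); the argument survives only because, again, the theorem's pigeonhole works for any cell whose largest class is large enough. Finally, you were right to abandon the vertex-removal iteration --- nothing guarantees the extracted simplex touches $\CH(S_i)$, and that obstacle is real.
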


\begin{proof}
  By Theorem~\ref{thm:(d+1)} there exists a constant $\mu_d\leq
  (d+1)\cdot4^{d(c_d+1)}$ such that every subset of $S$ of
  $\mu_d$ points determines at least one empty monochromatic
  $d$-simplex.
  Divide $S$ (with parallel \mbox{$(d\!-\!1)$-dimensional} hyperplanes) into
  $\left\lfloor\frac{n}{\mu_d}\right\rfloor$ subsets of $\mu_d$ points
  each.
  Hence, in total there exist at least
  $\left\lfloor\frac{n}{\mu_d}\right\rfloor$ empty monochromatic
  $d$-simplices in~$S$.
\end{proof}

The next result follows immediately from Lemma~\ref{lem:discrgen} and
provides a first general lower bound.

\begin{corollary}\label{cor:ems_in_dd+1}
  Let $S$ be a $k$-colored set of $n>k\cdot 4^{d^2(d+1)}$ points in
  general position in $\mathbb{R}^d$, with $d\geq k > 3$. Then $S$
  determines $\Omega{(n^{d-k+1} \log{n})}$ empty monochromatic
  $d$-simplices.
\end{corollary}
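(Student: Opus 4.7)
The plan is to observe that this corollary is a direct weakening of the Generalized Discrepancy Lemma (Lemma~\ref{lem:discrgen}). That lemma gives, under exactly the same hypotheses on $n$, $k$, and $d$, the stronger bound
\[
\Omega\!\left(n^{d-k+1}\cdot(\delta(S)+\log n)\right)
\]
on the number of empty monochromatic $d$-simplices in~$S$.

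First I would note that the discrepancy satisfies $\delta(S)=k|S_{\max}|-n\geq 0$, since $|S_{\max}|\geq n/k$ by pigeonhole. Consequently $\delta(S)+\log n\geq \log n$, and substituting this into the bound from Lemma~\ref{lem:discrgen} yields at least $\Omega(n^{d-k+1}\log n)$ empty monochromatic $d$-simplices. The hypotheses on $n$, $k$, $d$ required for invoking Lemma~\ref{lem:discrgen} are identical to those stated in the corollary, so no additional work is needed to verify applicability.

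There is essentially no obstacle here: the corollary is just a packaging of Lemma~\ref{lem:discrgen} that suppresses the discrepancy-dependent term and keeps only the $\log n$ factor, which is useful as a stand-alone lower bound when no information about the color-class sizes is assumed. The proof therefore reduces to a single application of Lemma~\ref{lem:discrgen} together with the trivial observation $\delta(S)\geq 0$.
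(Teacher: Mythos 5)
Your proposal is correct and is exactly the paper's own argument: the corollary follows directly from Lemma~\ref{lem:discrgen} together with the observation that $\delta(S)\geq 0$ for every colored set. Nothing further is needed.
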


\begin{proof}
  This is a direct consequence of Lemma~\ref{lem:discrgen} since every
  colored set has discrepancy at least~$0$.
\end{proof}

We will further improve on this result in Theorem~\ref{thm:ems_in_d3+}
below. The next theorem is central for this improvement and provides a
relation between the number of empty monochromatic $d$-simplices of an
arbitrary color in a $d$-colored point set $S\subset\mathbb{R}^d$, and
convex subsets of $S$ with high discrepancy.

\begin{theorem}\label{thm:triang_or_dis_d_base}
  Let $S$ be a $d$-colored set of $n\geq 3d\cdot(2c_d)^{(2^{d-1})}$
  points in general position in $\mathbb{R}^d$, $d>2$ and $c_d$ as
  defined in Lemma~\ref{lem:convtriang}.
  For every $1 \leq j \leq d$, either there are $\Omega(n^{1+2^{-d}})$
  empty monochromatic $d$-simplices of color $j$, or there is a convex
  set $C$ in $\mathbb{R}^d$, such that $|S\cap C|=\Theta(n)$ and
  $\delta(S\cap C) = \Omega(n^{(2^{-d})})$.
\end{theorem}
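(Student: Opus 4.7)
I would proceed by a case analysis on the size of $|S_j|$ and on the global discrepancy $\delta(S)$.

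\emph{Size case.} Suppose first that $|S_j|\ge n/d+c_1\,n^{2^{-d}}$ for a sufficiently large constant $c_1$. For every $p\in S_j$, I apply the appropriate pulling star-complex lemma to the point set $S_j$ with apex $p$ (Lemma~\ref{lem:starp3d} for $d=3$, Lemma~\ref{lem:starp} for $d>3$). This gives a simplicial complex $\mathcal{K}_p$ on vertex set $S_j$ of size at least $(d-1)|S_j|+O(\log|S_j|)$, every $d$-simplex of which contains $p$ and is therefore monochromatic of color~$j$. Because $\mathcal{K}_p$ is a simplicial complex, each point of $S\setminus S_j$ lies in the interior of at most one of its $d$-simplices, so at least $(d-1)|S_j|-(n-|S_j|)+O(\log|S_j|)=d|S_j|-n+O(\log|S_j|)=\Omega(n^{2^{-d}})$ simplices of $\mathcal{K}_p$ are empty of $S$. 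Summing over $p\in S_j$ and dividing by the $d+1$ vertices per simplex will yield $\Omega(|S_j|\cdot n^{2^{-d}}/(d+1))=\Omega(n^{1+2^{-d}})$ distinct empty monochromatic color-$j$ simplices, establishing the first clause.

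\emph{Global discrepancy case.} Otherwise $|S_j|<n/d+c_1 n^{2^{-d}}$. If in this regime some color class $S_i$ satisfies $|S_i|\ge n/d+c_2\,n^{2^{-d}}$ for a large enough constant $c_2$, then $\delta(S)=d\,|S_{\max}|-n=\Omega(n^{2^{-d}})$, and taking $C:=\Conv(S)$ makes $|S\cap C|=n$ and $\delta(S\cap C)=\delta(S)=\Omega(n^{2^{-d}})$, yielding the second clause.

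\emph{Balanced sub-case.} The remaining regime is when every color class has size within $O(n^{2^{-d}})$ of $n/d$. Here the direct star-complex sum from the size case drops to $O(n\log n)$, which is far short of $n^{1+2^{-d}}$, and $\delta(S)$ is too small for $C=\Conv(S)$. To handle it I plan to invoke Lemma~\ref{lem:order} on $S_j$: the resulting triangulation $\mathcal{T}_j$ has $(d-1)|S_j|+(|S_j|-h_j)^{2^{1-d}}+2h_j-c_d$ simplices incident to $\CH(S_j)$, and at least $\Theta(n^{2\cdot 2^{-d}})$ of them will be empty after subtracting the $n-|S_j|$ blocking non-$j$ points. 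When this quantity fails to match $n^{1+2^{-d}}$, almost all of the $\Theta(n)$ non-$j$ points must cluster in the CH-incident ``boundary layer'' of $\Conv(S_j)$; the convex hull of a suitable sub-collection of the blocked simplices then supplies the required $C$, with $|S\cap C|=\Theta(n)$ (because $|S_j|=\Theta(n)$ alone contributes that many) and discrepancy $\Omega(n^{2^{-d}})$ pushed up by the concentrated non-$j$ population. The hard part will be this aggregation step: turning a distributed per-simplex blocking statistic into a single convex set $C$ with both $|S\cap C|=\Theta(n)$ and $\delta(S\cap C)=\Omega(n^{2^{-d}})$ simultaneously.
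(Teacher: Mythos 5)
Your first two cases are fine as far as they go (modulo the $\varrho(p)$ term in Lemma~\ref{lem:starp3d} for $d=3$, which you can absorb by summing over all apices since $\sum\varrho(p)=O(h)$), but they only dispose of the easy regimes. The entire content of the theorem lives in your ``balanced sub-case'', and there your argument has a genuine gap. A \emph{single} application of Lemma~\ref{lem:order} to $S_j$ can certify at most $\Theta(n^{2\cdot 2^{-d}})$ empty color-$j$ simplices after subtracting the up to $n-|S_j|=\Theta(n)$ blockers --- polynomially short of the required $\Omega(n^{1+2^{-d}})$ --- and the proposed rescue does not work: the blocked simplices of the Order-Lemma triangulation each have a vertex on $\CH(S_j)$ but otherwise reach arbitrarily deep into $\Conv(S_j)$, so a shortfall in the count does not confine the non-$j$ points to any ``boundary layer'', and the convex hull of any sub-collection of blocked simplices necessarily contains $\Theta(n)$ color-$j$ vertices as well, so there is no reason its discrepancy should be $\Omega(n^{2^{-d}})$. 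You correctly flag this aggregation step as the hard part, but it is not a technicality to be filled in later; it is the theorem.

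The paper closes this gap with an iteration rather than a single shot: it repeatedly applies the Order Lemma to the current color-$j$ class $R_i$, counts the $\Omega(\tilde n^{2^{1-d}})$ empty color-$j$ simplices it yields (each having a vertex on $\CH(R_i)$), and then \emph{peels off} $\CH(R_i)$ and the points of $S_i$ outside $\Conv(R_i)$ before repeating. Because every counted simplex uses a vertex that is subsequently deleted, the counts from different rounds are disjoint; because a chain of discrepancy checks on $S_i$, on $X_i=S_i\cap\Conv(R_i)$, and on $S_{i+1}$ either produces the convex set $C$ outright or forces $|S_i\setminus X_i|$ and $h_i$ to be $O(n^{2^{-d}})$, only $O(n^{2^{-d}})$ points are lost per round, so the process survives for $\Theta(n^{1-2^{-d}})$ rounds and the per-round gains multiply up to $\Omega(n^{2^{1-d}}\cdot n^{1-2^{-d}})=\Omega(n^{1+2^{-d}})$. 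This layer-peeling mechanism, together with the bookkeeping that keeps each intermediate set of size $\Theta(n)$ so that it can legitimately serve as $C$ when a discrepancy test fails, is the missing idea in your proposal.
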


\begin{figure}[htb]
  \centering
  \includegraphics{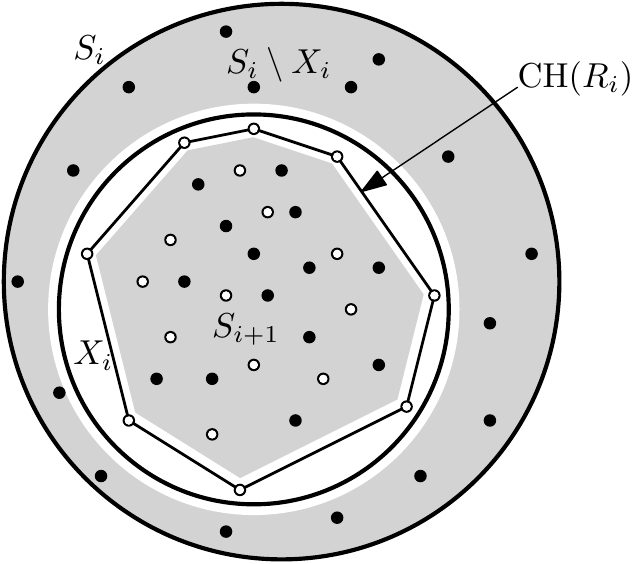}
  \caption{2D-sketch to illustrate the nomenclature of the proof of
    Theorem~\ref{thm:triang_or_dis_d_base}. White points are points of
    chromatic class $j$, black points are points of the other color
    classes.}
  \label{fig:T22_sketch}
\end{figure}

\begin{proof}
  The general idea for the proof is to iteratively peel convex layers
  of color~$j$ from the point set. For each layer we use the
  Generalized Order Lemma to obtain roughly $n^{(2^{1-d})}$ empty
  monochromatic $d$-simplices of color~$j$. If at any moment the
  discrepancy is large enough we terminate the process with the
  desired convex set~$C$. Otherwise, the iteration stops after at most
  $\frac{1}{8}n^{(1-2^{-d})}$ steps.

  Let $S_i$ be the $d$-colored set of points in iteration
  step~$i$. With $S_{i,l}$ we denote the chromatic classes of $S_i$,
  and with $S_{i,max}$ / $S_{i,min}$ we denote the largest / smallest
  chromatic class of $S_i$, respectively. Note that a point of $S_i$
  can only be in one chromatic class, and that $\bigcup_{l=1}^d
  S_{i,l} = S_i$.
  The iteration starts with $S_1=S$. For $i>1$ smaller sets are
  constructed, such that $S_{i+1} \subset S_i$ and $S_{i+1,l}
  \subseteq S_{i,l}$.
  Let $\tilde{n}=\frac{n}{3d}$. As an invariant through all iterations
  we guarantee \[\mbox{Invariant: }|S_i| \geq (d+1)\tilde{n} \mbox{\ .}\]
  The iteration stops either if a convex set $C$ is found, with
  $|S\cap C| = \Theta{(n)}$ and $\delta(S\cap C) \geq
  \frac{\tilde{n}^{(2^{-d})}}{(d-1)}$, or after at most
  $\frac{1}{8}n^{(1-2^{-d})}$ steps.

  Consider the $i$-th step of the iteration.
  We will prove inequalities on the sizes of different subsets, their
  discrepancy, and the size of chromatic classes.
  With $R_i$ we denote the $j$-th chromatic class in step~$i$,
  i.e., $S_{i,j}$ of $S_i$. Further, let $h_i$ be the number of points
  in $\CH{(R_i)}$ and let $X_i=S_i\cap\Conv{(R_i)}$, such that the
  chromatic classes of $X_i$ are $X_{i,l}=S_{i,l}\cap\Conv{(R_i)}$,
  with $X_{i,max}$ / $X_{i,min}$ being the largest / smallest
  chromatic class of $X_i$, respectively. See also
  Figure~\ref{fig:T22_sketch} for an illustration of the different
  sets.

  \begin{enumerate}
  \item[(1)] $\delta(S_i) < \frac{\tilde{n}^{(2^{-d})}}{d-1}$. \\
    If $\delta(S_i) \geq \frac{\tilde{n}^{(2^{-d})}}{d-1}$, then the
    iteration terminates with $C = \Conv(S_i)$, as $S\cap C = S_i$ and
    $|S_i|=\Theta(n)$ by the invariant.
  \item[(2)] $|R_i| > \frac{|S_i|}{d}-\frac{\tilde{n}^{(2^{-d})}}{d} > \tilde{n}$. \\
    By inequality~(1), $\delta(S_i) < \frac{\tilde{n}^{(2^{-d})}}{d-1}=
    d\cdot\frac{\tilde{n}^{(2^{-d})}}{d(d-1)}$. Applying
    Corollary~\ref{cor:dS_min_max} we get $|S_{i,min}| >
    \frac{|S_i|}{d}- (d-1)\cdot\frac{\tilde{n}^{(2^{-d})}}{d(d-1)} =
    \frac{|S_i|}{d}-\frac{\tilde{n}^{(2^{-d})}}{d} \geq
    \frac{(d+1)\tilde{n}-\tilde{n}^{(2^{-d})}}{d} > \tilde{n}$.
    Obviously $|R_i|\geq|S_{i,min}|$, which proves the inequality.
  \item[(3)] $\delta(X_i) < \frac{\tilde{n}^{(2^{-d})}}{d-1}$. \\
    Obviously, $|X_i|\geq|R_i|$. Thus, by inequality~(2), $|X_i|
    >\tilde{n} = \Theta(n)$. Hence, if $\delta(X_i) \geq
    \frac{\tilde{n}^{(2^{-d})}}{d-1}$, then the iteration terminates
    with $C = \Conv(X_i)$.
  \item[(4)] $(d-1)|R_i|-|X_i\setminus R_i| > -\tilde{n}^{(2^{-d})}$. \\
    Assume the contrary: $(d-1)|R_i|-|X_i\setminus R_i| \leq
    -\tilde{n}^{(2^{-d})}$, which can be rewritten to $d|R_i|\leq
    |X_i| - \tilde{n}^{(2^{-d})}$. From inequality~(3) we know that
    $\delta(X_i) < \frac{\tilde{n}^{(2^{-d})}}{d-1}$, which implies
    by Corollary~\ref{cor:dS_min_max} that $|X_{i,min}| >
    \frac{|X_i|}{d}-\frac{\tilde{n}^{(2^{-d})}}{d}$. As obviously
    $|R_i|\geq|X_{i,min}|$, we get $|X_i| - \tilde{n}^{(2^{-d})} \geq
    d|R_i| > |X_i| - \tilde{n}^{(2^{-d})}$, which is a contradiction.
  \item[(5)] $|S_i\setminus X_i| < 2\tilde{n}^{(2^{-d})}$. \\
    Assume the contrary: $|S_i\setminus X_i| \geq
    2\tilde{n}^{(2^{-d})}$. Using inequality~(3) and the definition
    for the discrepancy we get $
    \frac{\tilde{n}^{(2^{-d})}}{d-1}>\delta(X_i) = (d-1)|X_{i,max}| -
    |X_i\setminus X_{i,max}| \geq (d-1)|R_i|-|X_i\setminus
    R_i|$. Further, we know that $|X_i\setminus R_i| = |S_i\setminus
    R_i| - |S_i\setminus X_i|$ and from inequality~(2) we know $|R_i|
    > \frac{|S_i|}{d}-\frac{\tilde{n}^{(2^{-d})}}{d}$. Together with
    the assumption this leads to $\frac{\tilde{n}^{(2^{-d})}}{d-1}>
    (d-1)|R_i|-|S_i\setminus R_i| + |S_i\setminus X_i| = d|R_i|-|S_i|
    + |S_i\setminus X_i| >
    |S_i|-\tilde{n}^{(2^{-d})}-|S_i|+2\tilde{n}^{(2^{-d})}=\tilde{n}^{(2^{-d})}$,
    which is a contradiction.
  \item[(6)] $\delta(S_{i+1}) < \frac{\tilde{n}^{(2^{-d})}}{d-1}$. \\
    Using inequality~(5) we can give the following bound:
    $|X_i|=|S_i|-|S_i\setminus X_i|>
    |S_i|-2\tilde{n}^{(2^{-d})}$.\linebreak From inequality~(3) we get
    $(d-1)|X_{i,max}|-|X_i\setminus X_{i,max}| = \delta(X_i) <
    \frac{\tilde{n}^{(2^{-d})}}{d-1}$ and therefore
    $|R_i|\leq|X_{i,max}|<\frac{|X_i|+\frac{\tilde{n}^{(2^{-d})}}{d-1}}{d}$.
    Combining these inequalities and using the invariant for~$|S_i|$,
    we get $|S_{i+1}|\geq |X_i|-|R_i| >
    |X_i|-\frac{|X_i|+\frac{\tilde{n}^{(2^{-d})}}{d-1}}{d} >
    \frac{d-1}{d}\left(|S_i|-2\tilde{n}^{(2^{-d})}\right) -
    \frac{\tilde{n}^{(2^{-d})}}{d(d-1)} \geq
    \frac{(d-1)(d+1)}{d}\tilde{n} -
    \frac{2(d-1)^2+1}{d(d-1)}\tilde{n}^{(2^{-d})}$. As $d>2$ we may
    evaluate this relation to $|S_{i+1}| > \frac{8}{3}\tilde{n} -
    \frac{3}{2}\tilde{n}^{(2^{-d})} > \tilde{n} = \Theta(n)$.
    Hence, if $\delta(S_{i+1}) \geq \frac{\tilde{n}^{(2^{-d})}}{d-1}$,
    then the iteration terminates with $C = \Conv(S_{i+1})$.
  \item[(7)] $h_i < 2\tilde{n}^{(2^{-d})}$. \\
    As always, assume the contrary: $h_i \geq
    2\tilde{n}^{(2^{-d})}$. We distinguish two cases on whether $R_i$
    is the largest chromatic class of $X_i$ or not.
    \begin{enumerate}
    \item[(a)] If $R_i\neq X_{i,max}$ then $S_{i+1,max}=X_{i,max}$ and
      $|S_{i+1}\setminus S_{i+1,max}| = |X_{i}\setminus X_{i,max}| -
      h_i$. Using inequality~(6) and the definition for the
      discrepancy, we get
      $\frac{\tilde{n}^{(2^{-d})}}{d-1}>\delta(S_{i+1}) =
      (d-1)|S_{i+1,max}| - |S_{i+1}\setminus S_{i+1,max}| =
      (d-1)|X_{i,max}| - |X_{i}\setminus X_{i,max}| + h_i =
      \delta(X_i) + h_i$, which is a contradiction to the assumption,
      as $\delta(X_i)\geq0$.
    \item[(b)] If $R_i= X_{i,max}$, recall that $R_{i+1}$ denotes the
      $j$-th color class of $S_{i+1}$ and observe that $R_{i+1} = R_i
      \setminus (R_i\cap\CH{(R_i)})$. From inequality~(3) and $R_i=
      X_{i,max}$ we derive $\frac{\tilde{n}^{(2^{-d})}}{d-1} >
      \delta(X_i) = d|R_i|-|X_i|= d(|R_{i+1}|+h_i) - (|S_{i+1}|+h_i)$,
      and get $|R_{i+1}|< \frac{\tilde{n}^{(2^{-d})}}{d(d-1)} +
      \frac{|S_{i+1}|+h_i}{d}-h_i =
      \frac{|S_{i+1}|}{d}-(d-1)\cdot\left(\frac{h_i}{d}-
        \frac{\tilde{n}^{(2^{-d})}}{d(d-1)^2}\right)$. As
      $|S_{i+1,min}| \leq |R_{i+1}|$ we get from
      Lemma~\ref{lem:dS_min_max} that $\delta(S_{i+1}) \geq
      d\cdot\left(\frac{h_i}{d}-
        \frac{\tilde{n}^{(2^{-d})}}{d(d-1)^2}\right) = h_i -
      \frac{\tilde{n}^{(2^{-d})}}{(d-1)^2}$. Using inequality~(6) and
      inserting the assumption for $h_i$, results in the contradiction
      $\frac{\tilde{n}^{(2^{-d})}}{d-1}>\delta(S_{i+1}) \geq
      \frac{\tilde{n}^{(2^{-d})}}{d-1}\cdot\left(2(d-1)-\frac{1}{d-1}\right)$,
      as $d>2$.
    \end{enumerate}
  \end{enumerate}

  Using these inequalities we can provide a lower bound on the number
  of empty monochromatic $d$-simplices of color $j$ per step and
  hence, in total, and prove the invariant on $|S_i|$.
  From inequality~(2) we know that $|R_i| > \tilde{n} = \frac{n}{3d}
  \geq d+1$. Thus we may apply the Generalized Order Lemma
  (Lemma~\ref{lem:order}) to $R_i$, which guarantees at least
  $(d-1)|R_i|+(|R_i|-h_i)^{(2^{1-d})}+2h_i-c_d$ interior disjoint
  $d$-simplices of color $j$ with at least one point in $\CH(R_i)$
  each. Only points of \mbox{$(X_i\!\setminus\! R_i)$} can be in these
  $d$-simplices, and each of these \mbox{$|X_i\!\setminus\! R_i|$}
  points lies inside at most one $d$-simplex.
  Therefore, there exist at least $(d-1)|R_i|-|X_i\setminus
  R_i|+(|R_i|-h_i)^{(2^{1-d})}+2h_i-c_d =: \tau_i$ empty monochromatic
  $d$-simplices of color $j$, each of them having at least one point
  in $\CH(R_i)$.
  Using the inequalities~(4), (2), (7), and $h_i \geq 0$, we get
  $\tau_i > -\tilde{n}^{(2^{-d})} +
  \left(\tilde{n}-2\tilde{n}^{(2^{-d})}\right)^{(2^{1-d})} +0 -c_d
  \geq \frac{\tilde{n}^{(2^{1-d})}}{10}$, where the last inequality
  holds for $\tilde{n}\geq(2c_d)^{(2^{d-1})}$.

  The next iteration step $i+1$ considers $S_{i+1}= X_i \setminus
  (R_i\cap\CH{(R_i)})$ and $S_{i+1,l}= S_{i,l}\cap S_{i+1}$, for
  $1\leq l\leq d$. Note that all empty monochromatic $d$-simplices of
  color $j$ from step $i$ have at least one vertex in $\CH{(R_i)}$. As
  the points of $\CH{(R_i)}$ are not in $S_{i+1}$, we do not
  over-count.

  \medskip The iteration either terminates with a convex set $C$,
  such that $|S\cap C|=\Theta(n)$ and $\delta(S\cap C)
  \geq \frac{\tilde{n}^{(2^{-d})}}{d-1} = \Omega(n^{(2^{-d})})$, or it
  ends after $\frac{1}{8}n^{(1-2^{-d})}$ steps.
  With at least $\frac{\tilde{n}^{(2^{1-d})}}{10}$ empty monochromatic
  $d$-simplices of color $j$ per step we get
  $\frac{\tilde{n}^{(2^{1-d})}}{10}\cdot\frac{1}{8}n^{(1-2^{-d})} =
  \frac{1}{80}\cdot\left(\frac{n}{3d}\right)^{(2^{1-d})}\cdot
  n^{(1-2^{-d})} = \Omega(n^{(1+2^{-d})})$ such simplices in total.

  It remains to prove the invariant $|S_i|\geq (d+1)\tilde{n}$. After
  each step we have $S_{i+1}= X_i \setminus (X_i\cap\CH{(R_i)})$ and
  thus $|S_{i+1}| = |S_{i}| - |S_{i}\setminus X_i| - h_i$. With
  inequalities~(5) and~(7) we get $|S_{i+1}| > |S_{i}| -
  2\tilde{n}^{(2^{-d})}-2\tilde{n}^{(2^{-d})} = |S_{i}| -
  4\tilde{n}^{(2^{-d})}$. Therefore, starting with $S_1=S$, there are
  at least $n-4\tilde{n}^{(2^{-d})}\cdot\frac{1}{8}n^{(1-2^{-d})} =
  3d\tilde{n}-\frac{1}{2}\cdot\frac{\tilde{n}^{(2^{-d})}\cdot
    3d\tilde{n}}{(3d\tilde{n})^{(2^{-d})}} =
  \tilde{n}\left(3d-\frac{3d}{2\cdot(3d)^{(2^{-d})}}\right) \geq
  \frac{3d}{2}\tilde{n}>(d+1)\tilde{n}$ points left after
  $\frac{1}{8}n^{(1-2^{-d})}$ steps, as $d>2$.
\end{proof}

We generalize the last result to $k$-colored point sets, for $3\leq
k\leq d$.

\begin{theorem}\label{thm:triang_or_dis_d}
  Let $S$ be a $k$-colored set of $n\geq
  2^{d-k}\left(3k\cdot(2c_d)^{(2^{k-1})}+1\right)$ points in general
  position in $\mathbb{R}^d$, $d>2$ and $c_d$ defined as in
  Lemma~\ref{lem:convtriang}.
  For every $3 \leq k \leq d$ and every $1 \leq j \leq k$, either
  there are $\Omega(n^{d-k+1+2^{-d}})$ empty monochromatic
  $d$-simplices of color $j$, or there is a convex set $C$ in
  $\mathbb{R}^d$, such that $|S\cap C|=\Theta(n)$ and $\delta(S\cap C)
  = \Omega(n^{(2^{-d})})$.
\end{theorem}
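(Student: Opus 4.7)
I would prove the theorem by induction on $d-k$. The base case $d-k=0$ is exactly Theorem~\ref{thm:triang_or_dis_d_base}. For the inductive step I assume the theorem holds in dimension $d-1$ with $k$ colors (either the present theorem with parameters $(d-1,k)$ if $k\leq d-2$, or Theorem~\ref{thm:triang_or_dis_d_base} in dimension $d-1$ if $k=d-1$). Fix a color~$j$. If $|S_j|<n/k-(k-1)n^{2^{-d}}/k$ then Lemma~\ref{lem:dS_min_max} already yields $\delta(S)=\Omega(n^{2^{-d}})$ and I take $C=\Conv(S)$, so I may assume $|S_j|=\Theta(n)$.

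For each $p\in S_j$ I follow the two-hyperplane construction from Lemma~\ref{lem:starp}: pick parallel hyperplanes $\Pi_1,\Pi_2$ with $p$ strictly between them so that every line through $p$ and a point of $S\setminus\{p\}$ meets exactly one of $\Pi_1,\Pi_2$, and define the central projection $\pi_p$ accordingly. This yields a $k$-colored partition $S'_1\cup S'_2$ with $S'_i\subset\Pi_i\cong\mathbb{R}^{d-1}$. Let $S'_*$ denote whichever of $S'_1,S'_2$ has size at least $(n-1)/2$, and apply the inductive hypothesis to $S'_*$ with color~$j$. If for some $p$ the hypothesis returns a convex set $C'\subset\Pi_*$ with $|S'_*\cap C'|=\Theta(n)$ and $\delta(S'_*\cap C')=\Omega(n^{2^{-(d-1)}})$, I take $C$ to be the cone from $p$ through $C'$ restricted to the $\Pi_*$ side of $p$; this is convex in $\mathbb{R}^d$, and the bijection $\pi_p$ on that side gives $|S\cap C|=|S'_*\cap C'|=\Theta(n)$ and $\delta(S\cap C)=\delta(S'_*\cap C')\geq\Omega(n^{2^{-d}})$, finishing this case.

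Otherwise, for every $p\in S_j$ the hypothesis yields $\Omega(n^{d-k+2^{-(d-1)}})$ empty monochromatic $(d-1)$-simplices of color~$j$ in $S'_*$. Each such $\sigma'=\Conv(q'_1,\dots,q'_d)$ with preimages $q_i\in S_j$ under $\pi_p$ lifts to the $d$-simplex $\sigma=\Conv(\{p,q_1,\dots,q_d\})$, which is monochromatic of color~$j$; it is also empty in $S$, since $\sigma$ is a cone from $p$ over a facet whose central projection equals $\sigma'$, so any other $q\in S\cap\sigma$ would satisfy $\pi_p(q)\in\sigma'\cap S'_*$, contradicting emptiness of $\sigma'$. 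Each such $d$-simplex is produced by at most $d+1$ values of $p$ (one per vertex taken as the apex), so summing over $p\in S_j$ yields at least $|S_j|\cdot\Omega(n^{d-k+2^{-(d-1)}})/(d+1)=\Omega(n^{d-k+1+2^{-(d-1)}})\geq\Omega(n^{d-k+1+2^{-d}})$ empty monochromatic $d$-simplices of color~$j$.

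The hard part will be the ``convex set'' alternative when $p$ is interior to $\Conv(S)$: a single hyperplane does not suffice, and I need to restrict the cone lift of $C'$ to a single side of $p$ so that it stays convex while still capturing $\Theta(n)$ points of $S$ with the correct discrepancy. A minor secondary point is verifying that the size threshold $n\geq 2^{d-k}(3k(2c_d)^{2^{k-1}}+1)$ survives the halving $|S'_*|\geq(n-1)/2$ of each inductive step; the explicit factor $2^{d-k}$ in the bound is tailored precisely for this.
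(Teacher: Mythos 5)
Your proposal matches the paper's proof essentially step for step: induction on the dimension with Theorem~\ref{thm:triang_or_dis_d_base} as the base case, central projection from each point $p$ of color $j$ onto two parallel hyperplanes enclosing $\Conv(S)$, application of the induction hypothesis to the larger projected half, and either lifting the empty monochromatic $(d-1)$-simplices by coning with $p$ (with overcount factor $d+1$) or pulling back the convex set. The one step you flag as hard --- restricting the cone over $C'$ to the $\Pi_*$ side of $p$ so that the preimage stays convex and captures exactly $\pi_p^{-1}(S'_*\cap C')$ --- is already handled correctly in your sketch, and in fact more explicitly than in the paper, which merely asserts that the preimage of a convex set is convex.
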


\begin{figure}[htb]
  \centering
  \includegraphics{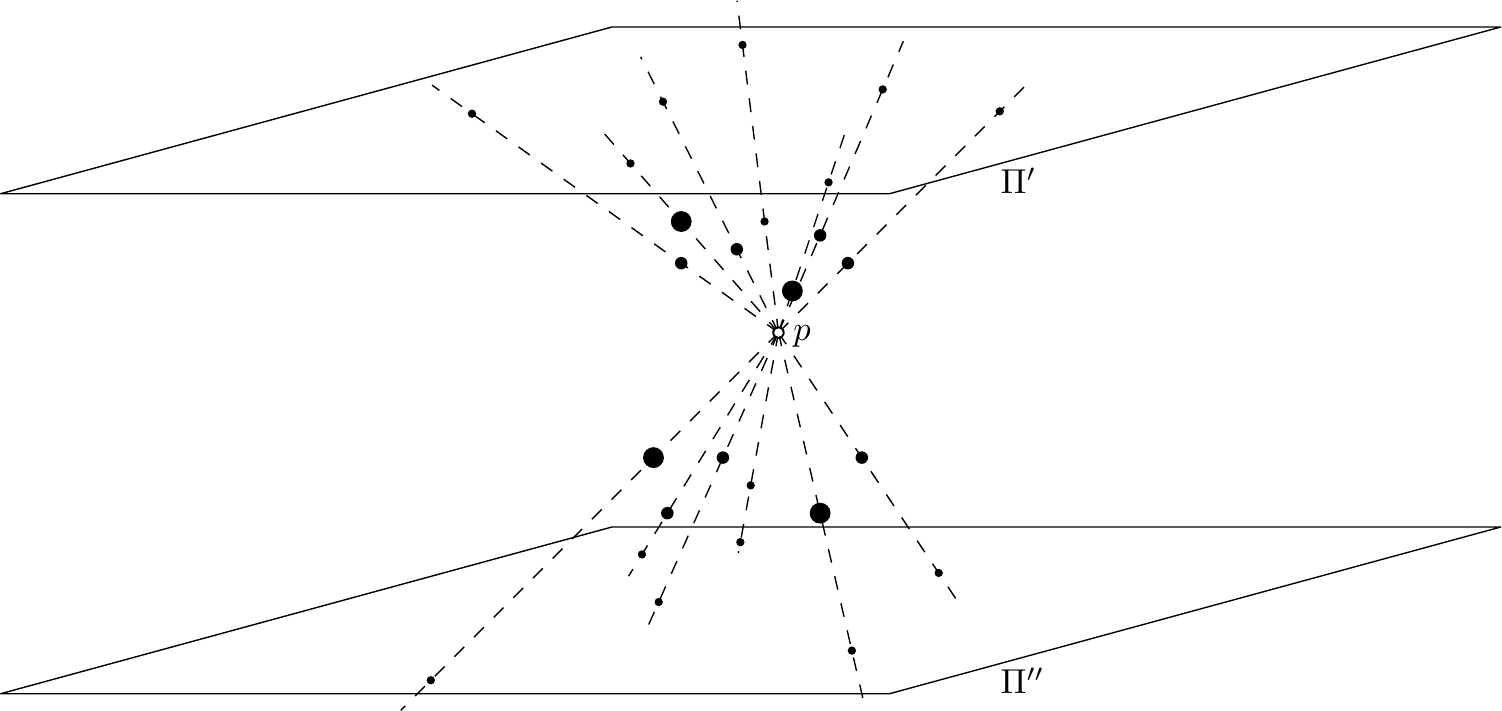}
  \caption{Illustration of the projection, $R^3$ to two
    $2$-dimensional hyperplanes in the sketch, in the proof of
    Theorem~\ref{thm:triang_or_dis_d}..}
  \label{fig:T23_sketch}
\end{figure}

\begin{proof}
  For fixed $k$ we prove the theorem by induction on the dimension,
  and use Theorem~\ref{thm:triang_or_dis_d_base} as an induction base
  for $d=k>2$. Consider the induction step $(d-1)\longrightarrow d$,
  for $d>k$. Denote with $S_j$ the $j$-th, and with $S_{min}$ the
  smallest chromatic class of $S$. If $\delta(S)\geq
  \frac{n^{(2^{-d})}}{k-1}$ then $C=\Conv(S)$ is the desired convex
  set, with $|S\cap C|=\Theta(n)$ and $\delta(S\cap C) =
  \Omega(n^{(2^{-d})})$.
  Thus assume that $\delta(S) < \frac{n^{(2^{-d})}}{k-1} =
  k\cdot\frac{n^{(2^{-d})}}{k(k-1)}$. From
  Corollary~\ref{cor:dS_min_max} we know that
  $|S_j|\geq|S_{min}|>\frac{|S|}{k}-(k-1)\cdot\frac{n^{(2^{-d})}}{k(k-1)}
  = \frac{n-n^{(2^{-d})}}{k}\geq \frac{n}{2k} = \Theta(n)$.

  Let $p\in S_j$ be a point of color $j$.
  For every point $q \in S\setminus\{p\}$ let $r_{q}$ be the infinite
  ray with origin $p$ and passing through $q$.
  Let $\Pi'$ and $\Pi''$ be two $(d-1)$-dimensional hyperplanes
  containing $\Conv(S)$ between them and not parallel to any of the
  rays $r_q$. See Figure~\ref{fig:T23_sketch} for a sketch.
  Project from $p$ every point in \mbox{$S\!\setminus\!\!\{p\}$} to
  $\Pi'$ or $\Pi''$, in the following way. Every ray $r_q$ intersects
  either $\Pi'$ or $\Pi''$ in a point $q'$ or $q''$,
  respectively. Take $q'$ or $q''$ to be the
  projection of $q$ from $p$. Let $S'$ and $S''$ be the sets of these projected
  points in $\Pi'$ and $\Pi''$, respectively.
  The bigger set, assume w.l.o.g. $S'$ in $\Pi'$, is a set of at least
  $\frac{n-1}{2}$ points in general position in $\mathbb{R}^{d-1}$. 

  Apply the induction hypothesis to $S'$ and get either (a)
  $\Omega(n^{d-1-k+1+2^{-d}})$ empty monochromatic
  \mbox{$(d\!-\!1)$}-simplices of color $j$, or (b) a convex set $C$
  in $\mathbb{R}^{d-1}$, such that $|S'\cap C|=\Theta(n)$ and
  $\delta(S'\cap C) = \Omega(n^{(2^{-d+1})})$.

  For case~(b) observe, that the preimage of the point set of a convex
  set in $\Pi'$ is the point set of a convex set in
  $\mathbb{R}^{d}$. Hence, $C$ is a convex set in $\mathbb{R}^{d}$,
  such that $|S\cap C|=\Theta(n)$ and $\delta(S\cap C) =
  \Omega(n^{(2^{-d+1})})$, which trivially implies $\delta(S\cap C) =
  \Omega(n^{(2^{-d})})$.

  For case~(a) note that, if $X$ is the vertex set of an empty
  monochromatic \mbox{$(d\!-\!1)$}-simplex of color $j$ in $\Pi'$,
  then $\Conv(X\cup{p})$ is an empty monochromatic $d$-simplex of
  color $j$ in $\mathbb{R}^{d}$.
  Repeat the projection and the induction for each point $p\in S_j$
  and assume that this always results in case~(a) (because the proof
  is completed if case~(b) happens once).
  This results in a total of
  $\frac{|S_j|}{d+1}\cdot\Omega(n^{d-k+2^{-d}}) =
  \Omega(n^{d-k+1+2^{-d}})$ empty monochromatic $d$-simplices of color
  $j$, as each $d$-simplex gets over-counted at most $(d+1)$ times.
\end{proof}

Combining the last theorem with the ``Generalized Discrepancy Lemma''
(Lemma~\ref{lem:discrgen}) and its different versions for the
3-colored case
(Lemmas~\ref{lem:discr3d}~to~\ref{lem:discr_k3_d4}),
we can prove one of our main results.

\begin{theorem}\label{thm:ems_in_d3+}
  Any $k$-colored set $S$ of $n$ points in general position in
  $\mathbb{R}^d$, $d \geq k \geq 3$, determines
  $\Omega(n^{d-k+1+2^{-d}})$ empty monochromatic $d$-simplices.
\end{theorem}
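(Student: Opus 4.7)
The plan is to apply Theorem~\ref{thm:triang_or_dis_d} and branch on which of its two conclusions holds. Fix any color, say $j=1$. The theorem gives either (a) $\Omega(n^{d-k+1+2^{-d}})$ empty monochromatic $d$-simplices of color $j$, in which case we are done immediately, or (b) a convex set $C\subset\mathbb{R}^d$ with $|S\cap C|=\Theta(n)$ and $\delta(S\cap C)=\Omega(n^{2^{-d}})$. All the work is in case~(b), where we must convert the concentrated discrepancy on $C$ into the desired count of empty monochromatic simplices by invoking a version of the Discrepancy Lemma on $S':=S\cap C$.

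In case~(b) I would split according to the values of $k$ and $d$ and apply whichever Discrepancy Lemma is adapted to that range; in every case the output has the shape $\Omega(n^{d-k+1}\cdot\delta(S'))$ (possibly with an extra $+\log n$ factor that we can drop). Concretely: for $k\ge 4$ (so $d\ge k\ge 4$) I would apply Lemma~\ref{lem:discrgen} to $S'$, obtaining $\Omega(|S'|^{d-k+1}(\delta(S')+\log|S'|))$. For $k=3$ I would split further: Lemma~\ref{lem:discr3d} handles $d=3$, Lemma~\ref{lem:discr_k3_d4} handles $d=4$, and Lemma~\ref{lem:discr_k3_d5+} handles $d\ge 5$, each giving $\Omega(|S'|^{d-2}\cdot\delta(S'))$ empty monochromatic $d$-simplices in $S'$.

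Substituting $|S'|=\Theta(n)$ and $\delta(S')=\Omega(n^{2^{-d}})$ yields in every case
\[
\Omega\!\bigl(n^{d-k+1}\cdot n^{2^{-d}}\bigr)=\Omega\!\bigl(n^{d-k+1+2^{-d}}\bigr)
\]
empty monochromatic $d$-simplices in $S'$, and those are automatically empty monochromatic $d$-simplices in $S$ as well. Before putting this together I would verify that $|S|$ is large enough that each Discrepancy Lemma's size hypothesis is satisfied by $S'$ (since $|S'|=\Theta(n)$ this follows for $n$ sufficiently large depending only on $d$ and $k$, both of which are constants).

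The routine but delicate part is the bookkeeping of the size thresholds and constants, in particular checking that the hypothesis of Theorem~\ref{thm:triang_or_dis_d} (namely $n\ge 2^{d-k}(3k(2c_d)^{2^{k-1}}+1)$) and the hypotheses of the Discrepancy Lemmas are compatible, so that for sufficiently large $n$ both steps apply simultaneously. The main conceptual obstacle, already absorbed into Theorem~\ref{thm:triang_or_dis_d}, is the dichotomy itself; given that dichotomy the present proof is essentially a one-line case split followed by the arithmetic $n^{d-k+1}\cdot n^{2^{-d}}=n^{d-k+1+2^{-d}}$.
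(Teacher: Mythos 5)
Your proposal is correct and matches the paper's own proof essentially verbatim: invoke Theorem~\ref{thm:triang_or_dis_d}, and in the high-discrepancy case apply Lemma~\ref{lem:discrgen} for $d\geq k>3$ and Lemmas~\ref{lem:discr3d}, \ref{lem:discr_k3_d4}, \ref{lem:discr_k3_d5+} for the three subcases of $k=3$, then substitute $|S\cap C|=\Theta(n)$ and $\delta(S\cap C)=\Omega(n^{2^{-d}})$. The case split and the final arithmetic are exactly those of the paper.
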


\begin{proof}
  By Theorem~\ref{thm:triang_or_dis_d} either there exist
  $\Omega(n^{d-k+1+2^{-d}})$ empty monochromatic $d$-simplices, or
  there exists a convex set $C$ in $\mathbb{R}^d$, such that $|S\cap
  C|=\Theta(n)$ and $\delta(S\cap C) = \Omega(n^{(2^{-d})})$.
  In the latter case, there exist $\Omega(n^{d-k+1+2^{-d}})$ empty
  monochromatic $d$-simplices by applying
  Lemma~\ref{lem:discrgen} (for $d \geq k > 3$),
  Lemma~\ref{lem:discr3d} (for $d=k=3$),
  Lemma~\ref{lem:discr_k3_d4} (for $d=4$ and $k=3$), or
  Lemma~\ref{lem:discr_k3_d5+} (for $d>4$ and $k=3$)
  to the point set $(S\cap C)$.
\end{proof}

\section{Empty Monochromatic Simplices in Two Colored Point Sets}\label{sec:ems2c}

For the sake of simplicity, we call the two color classes of a
bi-chromatic point set $S$ ``red'' and ``blue'', and denote these point
sets with $R$ and $B$, respectively.
Observe, that the discrepancy $\delta(S)=(k-1)|S_{max}|-|S\setminus
S_{max}|$ simplifies to $\delta(S)=\big||R|-|B|\big|$ for the
bi-chromatic case $k=2$. This is the same notion of
discrepancy as used in~\cite{triangmonojournal} and~\cite{pachmono}.

Note further, that assuming an upper bound for the discrepancy,
$\delta(S)=\big||R|-|B|\big|<f_n$, for a bi-colored set of $n$ points,
leads to lower and upper bounds for the cardinality of both color
classes in a simple way. The inequality reformulates to $|R|-|B|<f_n$
and $|R|-|B|>-f_n$.  Using $|R|=|S|-|B|$ and $|B|=|S|-|R|$ we make the
following simple observation, which will be used frequently later on.

\begin{observation}\label{obs:ds_bound_2D}
  Let $S$ be a bi-colored set of $n$ points in general position in
  $\mathbb{R}^2$, partitioned into a red point set~$R$ and a blue
  point set~$B$.
  Let $f_n$ be some function on $n$.
  If $\delta(S) < f_n$, then $|B|-f_n<|R|<|B|+f_n$ and
  $|R|-f_n<|B|<|R|+f_n$, and $\frac{n-f_n}{2}<|R|<\frac{n+f_n}{2}$ and
  $\frac{n-f_n}{2}<|B|<\frac{n+f_n}{2}$.
\end{observation}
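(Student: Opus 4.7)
My plan is to derive the observation by pure symbol manipulation, since it is a direct algebraic consequence of the definition of $\delta(S)$ specialized to $k=2$ combined with the identity $|R|+|B|=n$.

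First, I would unpack the absolute value in the hypothesis $\delta(S) = \big||R|-|B|\big| < f_n$. This single inequality is equivalent to the conjunction $-f_n < |R|-|B| < f_n$. Rearranging the two halves separately yields $|B|-f_n < |R|$ and $|R| < |B|+f_n$, which together give the first claimed pair $|B|-f_n<|R|<|B|+f_n$. Since the two halves are symmetric in $R$ and $B$, they can also be read as $|R|-f_n<|B|<|R|+f_n$, giving the second pair at no extra cost.

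Next, I would use the identity $|B| = n-|R|$, which holds because $R$ and $B$ partition $S$. Substituting this into $|B|-f_n < |R| < |B|+f_n$ gives $n-|R|-f_n < |R| < n-|R|+f_n$. Splitting this compound inequality, adding $|R|$ to all sides, and dividing by~$2$ yields the desired $\frac{n-f_n}{2} < |R| < \frac{n+f_n}{2}$. Performing the symmetric substitution $|R| = n-|B|$ in the second pair gives the analogous bounds for $|B|$.

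There is no real obstacle here: the only things to be careful about are the direction of the inequality signs when unpacking the absolute value and the fact that dividing a strict inequality by the positive constant $2$ preserves strictness. This is why the authors phrase the statement as an \emph{observation} rather than a lemma; its value is simply that later parts of the paper can cite these four bounds without re-deriving them whenever a discrepancy assumption of the form $\delta(S)<f_n$ is in force.
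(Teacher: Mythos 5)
Your proposal is correct and matches the paper's own justification, which is given informally in the text immediately preceding the observation: unpack $\delta(S)=\big||R|-|B|\big|<f_n$ into the two one-sided inequalities and then substitute $|B|=n-|R|$ (resp.\ $|R|=n-|B|$) to obtain the bounds on each class. There is nothing further to add.
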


We adapt the result and proof from~\cite{pachmono} on the number of
empty monochromatic triangles in bi-chromatic point sets to obtain the
central trade off between many empty monochromatic triangles and large
convex sets.

\begin{theorem}\label{thm:newbasecase2d}
  Let $S$ be a bi-colored set of $n$ points in general position in
  $\mathbb{R}^2$, partitioned into a red point set~$R$ and a blue
  point set~$B$.
  Then either there exist $\Omega(n^{4/3})$ empty red triangles, or
  there exists a convex set $C$ in $\mathbb{R}^2$, such that $|S\cap
  C|=\Theta(n)$ and $\delta(S\cap C) = \big||C\cap R|-|C\cap B|\big| =
  \Omega(\sqrt[3]{n})$.
\end{theorem}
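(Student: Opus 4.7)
The plan is to adapt the Pach-Tóth argument for $\Omega(n^{4/3})$ empty monochromatic triangles from~\cite{pachmono}, weaving in a discrepancy test at each stage exactly as in the proof of Theorem~\ref{thm:triang_or_dis_d_base}. At each stage we either detect the desired convex set $C$, or harvest a batch of empty red triangles witnessed by a vertex on the current red convex hull layer; distinctness across iterations is automatic because each harvested triangle has a vertex on $\CH(R_i)$, which is removed in the next step.

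First I would dispose of the trivial starting case: if $\delta(S) \geq c_0\, n^{1/3}$ for a suitable constant $c_0$, take $C = \Conv(S)$, which satisfies $|S \cap C| = n$ and $\delta(S \cap C) = \Omega(n^{1/3})$. Otherwise by Observation~\ref{obs:ds_bound_2D} we have $|R|, |B| = \Theta(n)$. Set $R_1 = R$, $S_1 = S$, and at step $i$ define $X_i = S_i \cap \Conv(R_i)$, $h_i = |\CH(R_i) \cap R_i|$, $R_{i+1} = R_i \setminus \CH(R_i)$, and $S_{i+1} = X_i \setminus \CH(R_i)$, stopping the peeling whenever $|R_i|$ drops below $n/4$.

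At iteration $i$, first I would check whether $\delta(X_i) \geq c_0\, n^{1/3}$; if so, terminate with $C = \Conv(R_i)$, whose $|S \cap C| = |X_i| \geq |R_i| = \Theta(n)$ by the invariant. Otherwise, apply the planar version of the Order Lemma from~\cite{triangmonojournal} to $R_i$ to obtain a triangulation of $R_i$ in which at least $|R_i| + \sqrt{|R_i|-h_i} + h_i - O(1)$ triangles contain a vertex of $\CH(R_i)$. Since by Observation~\ref{obs:ds_bound_2D} the number of blue points in $\Conv(R_i)$ is $|X_i| - |R_i| \leq |R_i| + c_0\, n^{1/3}$, and these are distributed over interior-disjoint triangles of the triangulation, at most that many of the above triangles contain a blue point. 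Hence at least
\[
T_i \;\geq\; \sqrt{|R_i|-h_i} \,+\, h_i \,-\, c_0\, n^{1/3} \,-\, O(1)
\]
empty monochromatic red triangles are harvested, each witnessed by a vertex on $\CH(R_i)$.

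Summing $\sum_i T_i$ should give the desired $\Omega(n^{4/3})$. The main obstacle is controlling the per-step gain when the sequence $(h_i)$ behaves pathologically: one splits iterations into a \emph{small-$h_i$} regime, where the $\sqrt{|R_i|-h_i}$ term dominates and many iterations are possible before the invariant $|R_i| \geq n/4$ is violated, and a \emph{large-$h_i$} regime, where the red set depletes quickly but each iteration contributes $\Omega(h_i)$ directly. In the small-$h_i$ regime, a $\Omega(n^{2/3})$ count of iterations each contributing $\Omega(\sqrt{n})$ (absorbing the $c_0\, n^{1/3}$ loss for $c_0$ chosen small enough) yields $\Omega(n^{4/3})$. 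In the large-$h_i$ regime, $\sum h_i = \Omega(n)$, which on its own yields $\Omega(n)$; the remaining deficit to $n^{4/3}$ is recovered by noting that whenever $h_i$ is atypically large, $R_i$ is close to convex position and the direct convex-polygon counting of Pach-Tóth applies. Calibrating $c_0$ and reconciling both regimes to absorb the $O(n^{1/3})$ loss per iteration completes the argument.
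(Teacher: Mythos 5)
There is a genuine gap, and it is quantitative: the layer-peeling scheme you propose cannot reach $\Omega(n^{4/3})$. Your per-iteration harvest is $T_i \geq \sqrt{|R_i|-h_i}+O(h_i)-c_0 n^{1/3}$, i.e.\ $O(\sqrt{n}+h_i)$ empty red triangles per peeled layer. Under the no-large-discrepancy assumption one is forced to have $h_i=O(n^{1/3})$ (this is inequality~(6) in the paper's proof: if $h_i=\Omega(n^{1/3})$, then removing $\CH(R_i)$ from the balanced set $X_i$ produces a convex set of discrepancy $\Omega(n^{1/3})$ and you are done), so your ``large-$h_i$ regime'' is vacuous; but in the remaining regime the adversary can arrange $h_i=\Theta(n^{1/3})$ on every layer, giving only $\Theta(n^{2/3})$ iterations before $R$ is depleted, hence a total of $\Theta(n^{2/3})\cdot O(\sqrt{n})=O(n^{7/6})$. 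Your claim that ``$\Omega(n^{2/3})$ iterations each contributing $\Omega(\sqrt{n})$ yields $\Omega(n^{4/3})$'' is an arithmetic slip: $\tfrac23+\tfrac12=\tfrac76<\tfrac43$. No calibration of $c_0$ fixes this; with discrepancy threshold $n^{\alpha}$ the method yields $n^{3/2-\alpha}$ triangles versus a convex set of discrepancy $n^{\alpha}$, which balances at $\alpha=1/4$ and $n^{5/4}$ --- exactly the older bound of~\cite{triangmonojournal} that~\cite{pachmono} improved upon. (The ``$R_i$ close to convex position'' escape hatch also does not work: one large outer layer says nothing about the layers beneath it.)

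The missing idea is the rich-point localization of~\cite{pachmono}, which is what the paper actually implements. Instead of applying the Order Lemma to the whole layer (where the $\sqrt{\cdot}$ surplus is spread over $h_i$ hull vertices, i.e.\ only $n^{1/6}$ per vertex), fan-triangulate $\Conv(R_i)$ from one hull vertex into $h_i-2$ triangles $\triangle_j$; since $h_i=O(n^{1/3})$, some $\triangle_j$ contains $\Omega(n^{2/3})$ red points, and by another discrepancy test it contains at most $O(n^{1/3})$ more blue than red points. Applying Lemma~\ref{lem:ordergen} \emph{inside} $\triangle_j$ gives $\sqrt{\Omega(n^{2/3})}-O(n^{1/3})=\Omega(n^{1/3})$ empty red triangles, all incident to one of only three vertices of $\triangle_j$, so one vertex $p\in\CH(R_i)$ carries $\Omega(n^{1/3})$ of them. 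One then removes \emph{only} $p$, which permits $\Theta(n)$ iterations and a total of $\Theta(n)\cdot\Omega(n^{1/3})=\Omega(n^{4/3})$. You cite this argument in your opening sentence but then substitute the coarser whole-layer peeling of Theorem~\ref{thm:triang_or_dis_d_base}, which is precisely the step that loses the exponent.
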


\begin{proof}
  Following the lines of~\cite{pachmono} and using their notion, we
  call a point $p\in S$ \emph{rich} if at least
  $\frac{\sqrt[3]{n}}{3}$ empty monochromatic triangles in $S$ have
  $p$ as a vertex.
  The general idea for the proof is to iteratively remove a rich red
  point from the point set. We show that it is possible to find either
  $\frac{n}{5}$ rich red points or a convex set $C$ with the desired
  properties.

  If there exists some convex set $C$ in $\mathbb{R}^2$, such that
  $|S\cap C|=\Theta(n)$ and $\delta(S\cap C) = \Omega(\sqrt[3]{n})$,
  then the theorem is proven. Hence, assume its nonexistence.
  Let $S_i$ be the bi-colored set of points in iteration
  step~$i$, and let $R_i$ and $B_i$ be its color classes.
  Further, let $h_i$ be the number of convex hull points of $R_i$ and let
  $X_i=S_i\cap\Conv{(R_i)}$. See also Figure~\ref{fig:T22_sketch} for an
  illustration of the different sets.
  The iteration starts with $S_1=S$. For $i>1$ smaller sets $S_{i+1}$
  are constructed, by removing one red point from $S_i$.
  Considering the $i$-th iteration $(1\leq i\leq \frac{n}{5})$, we can
  state the following relations:
  \begin{enumerate}
  \item[(1)] $|S_i|=|S|-(i-1)>n-\frac{n}{5}+1=\Theta(n)$. 
  \item[(2)] $\delta(S_i)<\frac{\sqrt[3]{n}}{20}$. \\
    By relation~(1), $|S_i|=\Theta(n)$. Thus, if
    $\delta(S_i)\geq\frac{\sqrt[3]{n}}{20}$, then we can set
    $C=\Conv{(S_i)}$, implying $|S\cap C|=\Theta(n)$ and $\delta(S\cap
    C)=\Omega(\sqrt[3]{n})$, which we assumed not to exist.
  \item[(3)] $|R_i|>\frac{2n}{5}-\frac{\sqrt[3]{n}}{40}$. \\
    Using inequality~(1) and~(2), and Observation~\ref{obs:ds_bound_2D}
    we get $|R_i|>\frac{|S_i|-\frac{\sqrt[3]{n}}{20}}{2} >
    \frac{4n}{10}-\frac{\sqrt[3]{n}}{40}$.
  \item[(4)] $\delta(X_i)<\frac{\sqrt[3]{n}}{20}$. \\
    Obviously, $|X_i|\geq|R_i|=\Theta(n)$, by inequality~(3). Thus,
    $\delta(X_i)\geq\frac{\sqrt[3]{n}}{20}$ again supplies us with some
    $C=\Conv{(S_i)}$, which we assumed not to exist.
  \item[(5)] $|S_i\setminus X_i|<\frac{\sqrt[3]{n}}{10}$. \\
    Note that $|S_i\setminus X_i|=|B_i\setminus X_i|$. Using
    inequality~(4) and Observation~\ref{obs:ds_bound_2D} we get
    $|B_i\cap\Conv(X_i)|=|B_i|-|S_i\setminus X_i| = |X_i\setminus R_i|
    > |R_i| - \frac{\sqrt[3]{n}}{20}$. Using inequality~(2) we get
    $|S_i\setminus X_i| < |B_i|-|R_i| + \frac{\sqrt[3]{n}}{20} \leq
    \delta(S_i) + \frac{\sqrt[3]{n}}{20} < 2\frac{\sqrt[3]{n}}{20}$.
  \item[(6)] $h_i<\frac{\sqrt[3]{n}}{10}$. \\
    Let $X_i'=X_i\setminus (X_i\cap\CH(R_i))$. Obviously,
    $|X_i'|\geq|B_i|-|S_i\setminus X_i|$, and consequently
    $|X_i'|>\frac{4n}{10}-\frac{\sqrt[3]{n}}{40}-\frac{\sqrt[3]{n}}{10}=\Theta(n)$
    by inequality~(1) and~(2), Observation~\ref{obs:ds_bound_2D}, and
    inequality~(5).
    Therefore, we can assume $\delta(X_i') =
    \big|(|R_i|-h_i)-|X_i\setminus R_i|\big| <
    \frac{\sqrt[3]{n}}{20}$, because the contrary would imply the
    existence of some $C=\Conv{(X_i')}$, which we assumed not to
    exist.
    From $|X_i\setminus R_i|-(|R_i|-h_i) < \frac{\sqrt[3]{n}}{20}$ and
    inequality~(4) we get $h_i < |R_i| - |X_i\setminus R_i| +
    \frac{\sqrt[3]{n}}{20} = \delta(X_i)+\frac{\sqrt[3]{n}}{20} <
    \frac{\sqrt[3]{n}}{20}+\frac{\sqrt[3]{n}}{20}$.
  \end{enumerate}

  Using these inequalities we can prove the existence of rich
  points. Let $p_1,\ldots,p_{h_i}$ be the convex hull points of $R_i$
  in counter clock-wise order. Triangulate $CH(R_i)$ by adding the
  diagonals $p_1p_j$, for $3\leq j\leq (h_i-1)$. In the resulting
  triangulation let $\triangle_j$, $2\leq j\leq (h_i-1)$, be the
  triangle $p_1p_jp_{j+1}$. With $S(\triangle_j)$ denote the
  bi-colored set of points interior to $\triangle_j$ and let
  $R(\triangle_j)$ and $B(\triangle_j)$ be its color classes.

  \begin{enumerate}
  \item[(7)] $\delta(S(\triangle_j)) =
    \big||R(\triangle_j)|-|B(\triangle_j)|\big| <
    \frac{\sqrt[3]{n}}{10}$ for every $2\leq j\leq (h_i-1)$. \\
    Assume the contrary: $\delta(S(\triangle_j)) \geq
    \frac{\sqrt[3]{n}}{10}$ for some $\triangle_j$, $2\leq j\leq
    (h_i-1)$.
    Consider the three regions
    $(\triangle_{2}\cup\ldots\cup\triangle_{j-1})$, $\triangle_j$, and
    $(\triangle_{j+1}\cup\ldots\cup\triangle_{h_i-1})$. At least one of
    these three regions contains at least $\frac{|X_i|-h_i}{3} =
    \frac{|S_i|-|S_i\setminus X_i|-h_i}{3} >
    \frac{1}{3}\left(n-\frac{n}{5}+1-\frac{\sqrt[3]{n}}{10}-\frac{\sqrt[3]{n}}{10}\right)
    > \frac{n}{5}$ interior points, by inequality~(1), (5),
    and~(6).

    If $|S(\triangle_j)| \geq \frac{n}{5} = \Theta(n)$, then we can
    set $C=\Conv{(S(\triangle_j))}$, which we assumed not to
    exist. Thus assume w.l.o.g. that region
    $(\triangle_{2}\cup\ldots\cup\triangle_{j-1})$ has at least
    $\frac{n}{5}$ interior points, i.e.,
    $|S(\triangle_{2})\cup\ldots\cup S(\triangle_{j-1})| \geq
    \frac{n}{5} = \Theta(n)$.
    Note that also $|S(\triangle_{2})\cup\ldots\cup
    S(\triangle_{j-1})\cup S(\triangle_{j})| =
    |S(\triangle_{2})\cup\ldots\cup
    S(\triangle_{j-1})|+|S(\triangle_{j})| \geq \frac{n}{5} =
    \Theta(n)$.
    Then either the points inside region
    $(\triangle_{2}\cup\ldots\cup\triangle_{j-1})$ have high
    discrepancy, $\delta\left(S(\triangle_{2})\cup\ldots\cup
      S(\triangle_{j-1})\right)\geq\frac{\sqrt[3]{n}}{20}$, and thus
    we can set $C=\Conv{\left(S(\triangle_{2})\cup\ldots\cup
        S(\triangle_{j-1})\right)}$, or the discrepancy in region
    $(\triangle_{2}\cup\ldots\cup\triangle_{j-1}\cup\triangle_{j})$ is
    high, $\delta\left(S(\triangle_{2})\cup\ldots\cup
      S(\triangle_{j-1})\cup S(\triangle_j)\right)\geq
    \delta(S(\triangle_j)) -
    \delta\left(S(\triangle_{2})\cup\ldots\cup
      S(\triangle_{j-1})\right) > \frac{\sqrt[3]{n}}{20}$ and thus we
    can set $C=\Conv{\left(S(\triangle_{2})\cup\ldots\cup
        S(\triangle_{j-1})\cup S(\triangle_j)\right)}$.
    Both times the existence of a convex set $C$, with $|S\cap
    C|=\Theta(n)$ and $\delta(S\cap C)=\Omega(\sqrt[3]{n})$, is a
    contradiction to its assumed nonexistence and consequently a
    contradiction to the assumed existence of some $\triangle_j$,
    $2\leq j\leq (h_i-1)$, with $\delta(S(\triangle_j)) \geq
    \frac{\sqrt[3]{n}}{10}$.
  \end{enumerate}

  By inequalities~(3) and~(6) we have
  $\sum_{j=2}^{h_i-1}(|R(\triangle_j)|) = |R_i|-h_i >
  \frac{4n}{10}-\frac{\sqrt[3]{n}}{40}-\frac{\sqrt[3]{n}}{10} >
  \frac{3n}{10}$. Hence, there exists a $\triangle_j$, such that
  $|R(\triangle_j)| > \frac{3n}{10(h_i-2)} > 3n^{2/3}$. Further, using
  inequality~(7) and Observation~\ref{obs:ds_bound_2D} we have
  $|B(\triangle_j)| < |R(\triangle_j)| + \frac{\sqrt[3]{n}}{10}$.
  Applying Lemma~\ref{lem:ordergen} for $d=2$ we know that there exist
  at least $|R(\triangle_j)| + \sqrt{|R(\triangle_j)|} +1$ interior
  disjoint red triangles, each with a point in $\CH(\triangle_j)$. At
  least $|R(\triangle_j)|+\sqrt{|R(\triangle_j)|}+1-|B(\triangle_j)| >
  |R(\triangle_j)|+\sqrt{|R(\triangle_j)|}+1-|R(\triangle_j)|-\frac{\sqrt[3]{n}}{10}
  > \sqrt{3n^{2/3}} - \frac{\sqrt[3]{n}}{10} > \sqrt[3]{n}$ of these
  triangles are empty of points, and at least a third of them has the
  same point $p$ in $\CH(\triangle_j)$ and thus in $\CH(R_i)$. Hence,
  $p$ is a rich point.

  \bigskip If $i<\frac{n}{5}$ then let $S_{i+1}=S_i\setminus\{p\}$,
  $i=i+1$, and iterate. As all triangles counted so far have $p$ as a
  vertex, and $p$ does not belong to the point sets of future
  iterations, we do not overcount.
  The process either terminates with a convex set $C$, such that
  $|S\cap C|=\Theta(n)$ and $\delta(S\cap C) = \Omega(\sqrt[3]{n})$,
  or it ends after $\frac{n}{5}$ steps.
  For each rich point we can count at least $\frac{\sqrt[3]{n}}{3}$
  empty red triangles. As we get $\frac{n}{5}$ rich points and do not
  overcount we get $\frac{n}{5}\cdot\frac{\sqrt[3]{n}}{3} =
  \Omega(n^{4/3})$ empty red triangles in total.
\end{proof}

Combining Theorem~\ref{thm:newbasecase2d} with
Lemma~\ref{lem:discrepancyd2k2} proves the bound of $\Omega(n^{4/3})$
empty monochromatic triangles for the 2-colored case in the plane,
already shown in~\cite{pachmono}.
However, Theorem~\ref{thm:newbasecase2d} can be generalized to
$\mathbb{R}^d$:

\begin{theorem}\label{thm:newtriangordisgen}
  Let $S$ be a bi-colored set of $n$ points in general position in
  $\mathbb{R}^d$ $(d\geq2)$, partitioned into a red point set~$R$ and
  a blue point set~$B$.
  Then either there exist $\Omega(n^{d-2/3})$ empty red
  \mbox{$d$-simplices}, or there exists a convex set $C$ in
  $\mathbb{R}^d$, such that $|S\cap C|=\Theta(n)$ and $\delta(S\cap C)
  = \Omega(\sqrt[3]{n})$.
\end{theorem}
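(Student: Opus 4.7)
The plan is to proceed by induction on $d$, with the base case $d=2$ being exactly Theorem~\ref{thm:newbasecase2d}. The inductive step will closely parallel the proof of Theorem~\ref{thm:triang_or_dis_d}: given the statement in dimension $d-1$, I will fix a red apex $p \in R$, centrally project the remaining points from $p$ onto two parallel $(d-1)$-dimensional hyperplanes sandwiching $\Conv(S)$, and feed the larger projected set into the induction hypothesis.

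For the inductive step from $d-1$ to $d$, first observe that if $\delta(S) = \Omega(\sqrt[3]{n})$, we are done by taking $C = \Conv(S)$. So assume $\delta(S) < c\sqrt[3]{n}$ for a suitable constant $c$; by Observation~\ref{obs:ds_bound_2D}, both $|R|$ and $|B|$ are $\Theta(n)$. Fix $p \in R$, pick two parallel $(d-1)$-dimensional hyperplanes $\Pi', \Pi''$ containing $\Conv(S)$ strictly between them and not parallel to any ray $\overrightarrow{pq}$, and project each $q \in S \setminus \{p\}$ from $p$ onto whichever of the two hyperplanes its ray meets first, inheriting the color of $q$. Taking the larger of the two resulting sets, say $S' \subset \Pi'$ with $|S'| \geq (n-1)/2 = \Theta(n)$, apply the induction hypothesis to the bi-colored set $S'$ in $\mathbb{R}^{d-1}$.

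The induction hypothesis yields either (a) $\Omega(n^{(d-1) - 2/3})$ empty red $(d-1)$-simplices in $S'$, each of which lifts with apex $p$ to an empty red $d$-simplex in $S$, or (b) a convex set $C' \subset \Pi'$ with $|S' \cap C'| = \Theta(n)$ and $\delta(S' \cap C') = \Omega(\sqrt[3]{n})$. In case (a) the lifted simplex is red because all vertices (namely $p$ and the preimages of $\sigma'$'s vertices) are red, and empty because any point $q \in S \setminus \{p\}$ lying inside it would project into the interior of $\sigma'$, contradicting emptiness in $S'$. In case (b), the preimage of $C'$ under the central projection is a convex cone $K$ with apex $p$ (since a union of rays from $p$ to a convex set is itself convex); because the projection is a color-preserving bijection on $S \setminus \{p\}$, the set $C := \Conv(S \cap K)$ satisfies $|S \cap C| = \Theta(n)$ and $\delta(S \cap C) = \Omega(\sqrt[3]{n})$, so we are done.

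If case (b) ever occurs for some $p$, the theorem follows. Otherwise case (a) holds for every $p \in R$; summing over all $|R| = \Theta(n)$ red apices and dividing by the overcount factor $d+1$ (each empty red $d$-simplex has exactly $d+1$ red vertices and so is counted at most $d+1$ times), we obtain
\[
\frac{|R|}{d+1}\cdot \Omega\!\left(n^{d-5/3}\right) \;=\; \Omega\!\left(n^{d-2/3}\right)
\]
empty red $d$-simplices in $S$. The main obstacle is the bookkeeping around the induction base rather than any deep new idea: one must verify carefully that the lifting in case (a) really produces an empty simplex (this is where bijectivity of the projection on $S \setminus \{p\}$ and the fact that the cone over a convex set is convex are both used), and one must track the constants through Observation~\ref{obs:ds_bound_2D} so that the discrepancy threshold $\Omega(\sqrt[3]{n})$ is preserved across the induction. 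Once these are in place, the exponent calculus is automatic because we gain exactly one factor of $n$ per dimension.
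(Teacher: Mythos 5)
Your proposal is correct and follows essentially the same route as the paper's own proof: induction on $d$ with Theorem~\ref{thm:newbasecase2d} as base case, the discrepancy dichotomy via Observation~\ref{obs:ds_bound_2D}, central projection from a red apex onto two sandwiching hyperplanes, lifting of the $(d-1)$-dimensional conclusion, and division by the overcount factor $d+1$. No substantive differences to report.
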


\begin{proof}
  We prove the theorem by induction on the dimension $d$ (recall that
  $d$ is a constant, independent of $n$), and use
  Theorem~\ref{thm:newbasecase2d} as an induction base for $d=2$.
  Consider the induction step $(d-1)\longrightarrow d$, for
  $d>2$.
  If $\delta(S)\geq \sqrt[3]{n}$ then $C=\Conv(S)$ is the desired
  convex set, with $|S\cap C|=\Theta(n)$ and $\delta(S\cap C) =
  \Omega(\sqrt[3]{n})$.
  Thus assume that $\delta(S) < \sqrt[3]{n}$. From
  Observation~\ref{obs:ds_bound_2D} we know that
  $|R|>\frac{n-\sqrt[3]{n}}{2} = \Theta(n)$.

  Let $p\in R$ be a red point.
  For every point $q \in S\setminus\{p\}$ let $r_{q}$ be the infinite
  ray with origin $p$ and passing through $q$.
  Let $\Pi'$ and $\Pi''$ be two $(d-1)$-dimensional hyperplanes
  containing $\Conv(S)$ between them and not parallel to any of the
  rays $r_q$. See Figure~\ref{fig:T23_sketch} on
  page~\pageref{fig:T23_sketch} (for the very similar proof of
  Theorem~\ref{thm:triang_or_dis_d}) for a sketch.
  Project from $p$ every point in \mbox{$S\!\setminus\!\!\{p\}$} to
  $\Pi'$ or $\Pi''$, in the following way. Every ray $r_q$ intersects
  either $\Pi'$ or $\Pi''$ in a point $q'$ or $q''$,
  respectively. Take $q'$ or $q''$ to be the
  projection of $q$ from $p$. Let $S'$ and $S''$ be the sets of these projected
  points in $\Pi'$ and $\Pi''$, respectively.
  The bigger set, assume w.l.o.g. $S'$ in $\Pi'$, is a set of at least
  $\frac{n-1}{2}$ points in general position in $\mathbb{R}^{d-1}$. 

  Apply the induction hypothesis to $S'$ and get either (a)
  $\Omega(n^{d-1-2/3})$ empty red \mbox{$(d\!-\!1)$}-simplices, or (b)
  a convex set $C$ in $\mathbb{R}^{d-1}$, such that $|S'\cap
  C|=\Theta(n)$ and $\delta(S'\cap C) = \Omega(\sqrt[3]{n})$.

  For case~(b) observe, that the preimage of a point set of a convex
  set in $\Pi'$ is the point set of a convex set in
  $\mathbb{R}^{d}$. Hence, $C$ is a convex set in $\mathbb{R}^{d}$,
  such that $|S\cap C|=\Theta(n)$ and $\delta(S\cap C) =
  \Omega(\sqrt[3]{n})$.

  For case~(a) note that, if $X$ is the vertex set of an empty red
  \mbox{$(d\!-\!1)$}-simplex in $\Pi'$, then $\Conv(X\cup{p})$ is an
  empty red $d$-simplex in $\mathbb{R}^{d}$.
  Repeat the projection and the induction for each red point $p\in R$
  and assume that this always results in case~(a) (because the proof
  is completed if case~(b) happens once).
  This results in a total of $\frac{|R|}{d+1}\cdot\Omega(n^{d-1-2/3})
  = \Omega(n^{d-2/3})$ empty red $d$-simplices, as each $d$-simplex
  gets overcounted at most $(d+1)$ times.
\end{proof}

Combining Theorem~\ref{thm:newtriangordisgen} with the two variants of
the ``Discrepancy Lemma'' for the bi-colored case
(Lemmas~\ref{lem:discrepancyd2k2} and~\ref{lem:discr2colors}), allows
us to generalize the bound on the number of empty monochromatic
triangles for the bi-colored case in the plane, to $\mathbb{R}^d$.

\begin{theorem}\label{thm:ems_in_d2+k2}
  Any bi-colored set $S$ of $n$ points in general position in
  $\mathbb{R}^d$, $d \geq 2$, determines
  $\Omega(n^{d-2/3})$ empty monochromatic $d$-simplices.
\end{theorem}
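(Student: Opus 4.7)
The plan is to combine the two tools that have already been set up in the preceding pages: the trade-off established in Theorem~\ref{thm:newtriangordisgen}, and the bi-colored versions of the Discrepancy Lemma (Lemma~\ref{lem:discrepancyd2k2} for $d=2$, Lemma~\ref{lem:discr2colors} for $d\geq 3$). Since the statement is symmetric in the two colors, it suffices to produce the bound for empty monochromatic $d$-simplices of some fixed color (say red), so I will focus on red throughout.

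First, I would apply Theorem~\ref{thm:newtriangordisgen} to $S$. This yields a dichotomy: either (a) $S$ already determines $\Omega(n^{d-2/3})$ empty red $d$-simplices, in which case the theorem follows immediately since empty red simplices are empty monochromatic simplices; or (b) there exists a convex set $C \subseteq \mathbb{R}^d$ with $|S\cap C|=\Theta(n)$ and $\delta(S\cap C) = \Omega(\sqrt[3]{n})$. In case (a) there is nothing further to do.

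For case (b), let $S_C := S \cap C$, a bi-colored point set of size $\Theta(n)$ with discrepancy at least $c_1\sqrt[3]{n}$ for some positive constant $c_1$. Depending on the dimension I would invoke the appropriate Discrepancy Lemma. If $d=2$, then Lemma~\ref{lem:discrepancyd2k2} applied to $S_C$ produces at least $\tfrac{\delta(S_C)-2}{6}\bigl(|S_C|+\delta(S_C)\bigr) = \Omega(\sqrt[3]{n}\cdot n) = \Omega(n^{4/3}) = \Omega(n^{d-2/3})$ empty monochromatic triangles; if $d\geq 3$, then Lemma~\ref{lem:discr2colors} applied to $S_C$ produces $\Omega(|S_C|^{d-1}\cdot \delta(S_C)) = \Omega(n^{d-1}\cdot \sqrt[3]{n}) = \Omega(n^{d-2/3})$ empty monochromatic $d$-simplices. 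Since every empty monochromatic simplex of $S_C$ is also an empty monochromatic simplex of $S$ (the emptiness condition is only strengthened by enlarging the point set against which we test), the bound transfers to $S$.

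There is no real obstacle here: the entire argument is a one-step combination, and the only thing to check is that the exponents match in both branches of the dichotomy, which they do because the $\sqrt[3]{n}$ factor coming from the discrepancy exactly compensates the missing $n^{1/3}$ between $n^{d-1}$ and $n^{d-2/3}$ (and analogously in the planar case). The thresholds on $n$ required by the two Discrepancy Lemmas are constants, so for $n$ large enough case (b) always falls within their range of applicability; for small $n$ the statement is vacuous up to the implicit constant in the $\Omega$-notation. This completes the plan.
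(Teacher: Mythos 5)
Your proposal is correct and follows exactly the same route as the paper's own proof: apply Theorem~\ref{thm:newtriangordisgen} to get the dichotomy, and in the high-discrepancy case feed $S\cap C$ into Lemma~\ref{lem:discrepancyd2k2} (for $d=2$) or Lemma~\ref{lem:discr2colors} (for $d\geq3$). The exponent bookkeeping and the observation that empty monochromatic simplices of $S\cap C$ remain empty in $S$ match the paper's argument.
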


\begin{proof}
  By Theorem~\ref{thm:newtriangordisgen} either there exist
  $\Omega(n^{d-2/3})$ empty monochromatic $d$-simplices, or there
  exists a convex set $C$ in $\mathbb{R}^d$, such that $|S\cap
  C|=\Theta(n)$ and $\delta(S\cap C) = \Omega(\sqrt[3]{n})$.

  In the former case the theorem is proven.
  In the latter case, if $d=2$ then there exist
  $\Omega(n^{2-1+1/3})=\Omega(n^{4/3})$ empty monochromatic triangles
  ($2$-simplices) by applying Lemma~\ref{lem:discrepancyd2k2} to
  $(S\cap C)$, and if $d>2$ then there exist
  $\Omega(n^{d-1+1/3})=\Omega(n^{d-2/3})$ empty monochromatic
  $d$-simplices by applying Lemma~\ref{lem:discr2colors} to $(S\cap
  C)$.
\end{proof}

\section{Conclusions} \label{sec:conclusions}

In this paper we generalized known bounds on the number of empty
monochromatic triangles and tetrahedra on colored point sets to higher
dimensions.
Our results are summarized in Table~\ref{tab:conclusion_k_over_d}
(Section~\ref{sec:intro}).

As main results, in Theorem~\ref{thm:ems_in_d2+k2}, we proved that any
bi-colored point sets in $\mathbb{R}^d$ determines $\Omega(n^{d-2/3})$
empty monochromatic $d$-simplices.
For $3 \leq k \leq n$, in Theorem~\ref{thm:ems_in_d3+}, we proved that
any $k$-colored point set in $\mathbb{R}^d$ determines
$\Omega(n^{d-k+1-2^{-d}})$ empty monochromatic $d$-simplices.
Further, we extended the linear lower bound for the number of empty
monochromatic tetrahedra in 4-colored point sets in $\mathbb{R}^3$ to
a linear lower bound for the number of empty monochromatic
$d$-simplices in \mbox{$(d+1)$-colored} point sets in $\mathbb{R}^d$,
Corollary~\ref{cor:(d+1)}.

\smallskip

In order to prove our lower bounds on the number of empty
monochromatic $d$-simplices, we proved a result that is interesting on
its own right.  Theorem~\ref{thm:dn+log} shows that a simplicial
complex with at least
$dn+\max{\left\{h,\frac{\log_2(n)}{2d}\right\}}-c_d$, with
\mbox{$c_d\!=\!d^3\!+\!d^2\!+\!d$}, $d$-simplices exists for any point
set in $\mathbb{R}^d$ (points in general position).

Although still linear, this is a first non-trivial bound, of interest
in view of the following open problem stated by Brass et
al.~\cite{bmj-rpdg-05}:
What is the maximum number $R_d(n)$ such that every set of $n$ points
in general position in $d$-dimensional space has a triangulation
consisting of at least $R_d(n)$ simplices?
Moreover, Urrutia~\cite{jorge3d} posed the following open problem:

\begin{problem}\label{pro:tetra}
  Is it true that for any point set in $\mathbb{R}^3$ there exists a
  triangulation with super linear many $3$-simplices?
\end{problem}

A positive answer to this question implies that any $k$ coloring of a
set of points with $n$ elements, always contains an empty monochromatic
simplex, $k$ constant, and $n$ sufficiently large.

Unfortunately, proving or disproving Problem~\ref{pro:tetra} seems to
be illusive and remains open. On the other hand, it is well known that
any set of $n$ points on the momentum curve $(x, x^2, x^3)$ has a
triangulation with a quadratic number of $3$-simplices. Aside from
this, we are not aware of many families of point sets in general
position in $\mathbb{R}^3$ for which it is known that there exist
triangulations with a quadratic number of $3$-simplices.

We close our paper with the following result that somehow suggests
that any point set with $n$ elements in $\mathbb{R}^d$ is not
\emph{far} from a point set that generates a quadratic number of
interior disjoint $3$-simplices:

\begin{theorem}\label{thm:quadratic}
  Any set $X$ of $n$ points in general position in $\mathbb{R}^3$ is
  contained in a set $S$ with $2n$ points in general position in
  $\mathbb{R}^3$ such that $S$ determines at least $n \choose 2$
  interior disjoint $3$-simplices.
\end{theorem}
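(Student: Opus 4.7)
\medskip
\noindent\textbf{Proof plan.} My plan is to obtain $S$ by taking $Y = \{y_1,\ldots,y_n\}$ where each $y_i$ is a small generic perturbation of the corresponding $x_i$, and then to use, for every pair $i<j$, the 3-simplex $T_{ij} := \Conv(x_i,x_j,y_i,y_j)$ as one of the required $\binom{n}{2}$ interior-disjoint tetrahedra. Concretely, fix $n$ generic vectors $v_1,\ldots,v_n \in \mathbb{R}^3$ and a small parameter $\epsilon>0$; set $y_i := x_i + \epsilon v_i$ and $S := X \cup \{y_1,\ldots,y_n\}$. Genericity of the $v_i$'s together with smallness of $\epsilon$ ensures that $S$ is in general position, and linear independence of $(x_j - x_i, v_i, v_j)$ (again generic) ensures that each $T_{ij}$ is a non-degenerate 3-simplex.

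The main work is to verify pairwise interior-disjointness of the $T_{ij}$'s. I split into two cases according to $|\{i,j\}\cap\{k,\ell\}|$. If $\{i,j\}\cap\{k,\ell\}=\emptyset$, then the segments $x_ix_j$ and $x_kx_\ell$ are skew in $\mathbb{R}^3$ by general position of $X$, hence at positive distance. Each $T_{ij}$ lies in the $O(\epsilon)$-tubular neighbourhood of its underlying segment, so a single sufficiently small choice of $\epsilon$ makes all such pairs $T_{ij},T_{k\ell}$ simultaneously disjoint.

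The delicate case is $|\{i,j\}\cap\{k,\ell\}| = 1$. By symmetry assume $k=i$ and $\ell\neq j$, so $T_{ij}$ and $T_{i\ell}$ share both $x_i$ and $y_i$, hence the edge $e := x_iy_i$. Suppose for contradiction some point $p$ lies in the interiors of both. Writing $p$ as strictly positive convex combinations
\[
p = a_1 x_i + a_2 x_j + a_3 y_i + a_4 y_j = b_1 x_i + b_2 x_\ell + b_3 y_i + b_4 y_\ell,
\]
substituting $y_i = x_i + \epsilon v_i$ (and similarly for $y_j,y_\ell$), and setting $\tilde a := a_2 + a_4 > 0$, $\tilde b := b_2 + b_4 > 0$, the identity of the two expressions reduces to
\[
\tilde a (x_j - x_i) - \tilde b (x_\ell - x_i) = \epsilon\bigl[(b_3 - a_3) v_i + b_4 v_\ell - a_4 v_j\bigr].
\]
The left-hand side lies in the plane $P := \mathrm{span}(x_j-x_i,\,x_\ell-x_i)$. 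Genericity of $v_i$ ensures $v_i \notin P$, so taking the inner product of both sides with a unit normal to $P$ yields $b_3 - a_3 = O(a_4 + b_4)$. Plugging this back and using $0 < a_4 < \tilde a$ and $0 < b_4 < \tilde b$, the two in-plane components (along a basis of $P$) give sandwich bounds $\tilde a \leq C\epsilon\,\tilde b$ and $\tilde b \leq C\epsilon\,\tilde a$ for a constant $C$ depending only on the $v_i$'s and the $x_i$'s. For $\epsilon$ small enough this forces $\tilde a = \tilde b = 0$, contradicting strict positivity.

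I expect the main obstacle to be precisely this second case: two tetrahedra sharing an edge must be shown to have no three-dimensional overlap even though each has positive volume. The argument above reduces it to a bookkeeping of barycentric weights, with the crucial input being that $v_i$ lies outside the 2-plane through $x_i, x_j, x_\ell$, which is a generic condition on the perturbation directions.
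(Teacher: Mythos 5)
Your construction is exactly the one in the paper: perturb each $x_i$ to $y_i=x_i+\epsilon v_i$ with generic directions and take the $\binom{n}{2}$ tetrahedra $\Conv(x_i,x_j,y_i,y_j)$. The paper dismisses the pairwise interior-disjointness as ``easy to see,'' whereas you supply a correct verification of it (the edge-sharing case via the barycentric bookkeeping), so the proposal is correct and follows essentially the same approach.
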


\begin{proof}
  Let $v_1, \ldots, v_n$ be $n$ different unit vectors, no two of
  which are parallel to each other, nor parallel to any segment
  determined by any two elements of $X$. For each point $p_i$, $1\leq
  i\leq n$, in $X$ let $q_i = p_i + \varepsilon\cdot v_i$ be a point
  of \mbox{$S\!\setminus\!X$}, where $\varepsilon$ is a small enough
  constant.
  Let $\sigma_{i,j}=\Conv(\{p_i,q_i,p_j,q_j\})$ be a $3$-simplex.
  As $X$ is in general position, it is easy to see, that for all
  $(i,j)\neq(r,s)$ the $3$-simplices $\sigma_{i,j}$ and $\sigma_{r,s}$
  have disjoint interior.
\end{proof}

Note that in many instances it will not be possible to complete the
set of $3$-simplices obtained in the proof above to a full
triangulation of $S$. Nevertheless, this shows that the families of
point sets admitting a quadratic number of interior disjoint empty
$3$-simplices may not have any special properties that would allow us
to characterize them.

Clearly, the construction used in Theorem~\ref{thm:quadratic} can be
generalized to higher dimensions. We conjecture:

\begin{conjecture}\label{conj:manycolor}
  For each $d\geq3$ and every constant $k$, there exists a constant
  $f(d,k)$ such that every set $S \in \mathbb{R}^d$ of more than
  $f(d,k)$ points with arbitrary $k$-coloring has a monochromatic
  empty $d$-simplex.
\end{conjecture}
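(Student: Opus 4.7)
The plan is to attempt induction on $k$, using Corollary~\ref{cor:(d+1)} to cover all $k \le d+1$ as the base case, and reducing higher $k$ to a quantitative triangulation question on the largest color class. Fix $k \ge d+2$ and let $S_{\max}$ be the largest color class of $S$, with $m := |S_{\max}| \ge n/k$; then $|S \setminus S_{\max}| = n - m \le (k-1)m$. Mimicking the pigeonhole argument used in the proof of Theorem~\ref{thm:(d+1)}, it would suffice to exhibit a triangulation $\mathcal{T}$ of $S_{\max}$ of size strictly greater than $(k-1)m$: every ``wrong-color'' point lies in the interior of at most one $d$-simplex of $\mathcal{T}$, so some $d$-simplex of $\mathcal{T}$ would contain no point of $S$ in its interior and be automatically monochromatic in the color of $S_{\max}$.

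The obstruction is precisely that for $k \ge d+2$ the required bound $(k-1)m \ge (d+1)m$ strictly exceeds the best currently known triangulation size $dm + \Omega(\log m)$ from Theorem~\ref{thm:dn+log}, and closing this gap uniformly over all point sets is essentially Problem~\ref{pro:tetra}. My plan is therefore to circumvent this via a Ramsey-type extraction: show that every sufficiently large point set in general position in $\mathbb{R}^d$ contains a subset $Y$, of bounded-but-growing size, whose best triangulation has size at least $(k-1)|Y|+1$. Natural candidate targets are subsets lying close to a moment curve (for which the excerpt recalls a quadratic-size triangulation in $\mathbb{R}^3$) or nested convex configurations in the spirit of Lemma~\ref{lem:nested}, extracted via Erd\H{o}s--Szekeres. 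Once such a $Y \subseteq S_{\max}$ is isolated, one triangulates $Y$ to exceed the $(k-1)|Y|$ threshold, inserts the remaining points of $S_{\max} \setminus Y$ one by one (each contributes at least one additional $d$-simplex by the insertion operation of Figure~\ref{fig:ins}), and concludes by the pigeonhole step above.

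I expect the Ramsey extraction to be the main obstacle: a guarantee that every large enough point set contains a configuration whose best triangulation beats the linear ``$(k-1)|Y|$'' barrier is itself a new structural statement about high-dimensional triangulations, and in full generality for arbitrary constant $k$ it may well be equivalent in difficulty to Problem~\ref{pro:tetra}. A realistic first milestone is the case $k = d+2$, where the required excess over Theorem~\ref{thm:dn+log} is smallest; here one might combine the Erd\H{o}s--Szekeres convex subsets already used in the proof of that theorem with the pulling-complex machinery of Section~\ref{sec:complex_and_triang} to squeeze out one extra simplex per convex hull vertex. Should this direct attack stall, an alternative route is to induct color-by-color: remove the smallest color class $S_{\min}$ with controlled loss $|S_{\min}| \le n/k$, and try to reduce the $k$-colored problem on $n$ points to the $(k-1)$-colored problem on roughly $n(1 - 1/k)$ points, tracking carefully how emptiness in the reduced set relates to emptiness in $S$ so that the inductive conclusion transfers.
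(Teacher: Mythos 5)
The statement you are addressing is Conjecture~\ref{conj:manycolor}: the paper does not prove it, and no proof is currently known. The paper only records, in the Conclusions, essentially the same reduction you identify — a superlinear lower bound on triangulation size for every point set (Conjecture~\ref{conj:superlinear}, generalizing Problem~\ref{pro:tetra}) would immediately yield the conjecture by the pigeonhole argument on the largest color class. Your first two paragraphs reproduce this reduction correctly and correctly locate the obstruction: for $k\geq d+2$ one needs a triangulation of $S_{\max}$ with more than $(k-1)|S_{\max}|\geq(d+1)|S_{\max}|$ simplices, whereas Theorem~\ref{thm:dn+log} only guarantees $d|S_{\max}|+\Omega(\log|S_{\max}|)$.

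However, the proposal is not a proof. The ``Ramsey-type extraction'' of a subset $Y$ whose best triangulation exceeds $(k-1)|Y|$ is left entirely unproved, and you yourself concede it may be as hard as Problem~\ref{pro:tetra}. Note also a quantitative flaw in how you intend to use it: even if such a $Y\subseteq S_{\max}$ were found, inserting the points of $S_{\max}\setminus Y$ one by one adds only one simplex each, so the resulting triangulation of $S_{\max}$ has size about $(k-1)|Y|+|S_{\max}|-|Y|$, which falls far short of the required $(k-1)|S_{\max}|$ unless $|Y|$ is a constant fraction of $|S_{\max}|$ — at which point you are again proving a superlinear (indeed, linear with slope $k-1$) triangulation bound for essentially the whole class, i.e., the open problem itself. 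The alternative color-by-color induction fails at exactly the step you flag: a monochromatic simplex that is empty with respect to $S\setminus S_{\min}$ may contain points of $S_{\min}$, and to survive this one needs more than $|S_{\min}|$ interior-disjoint monochromatic simplices in the reduced set, which returns you to the same counting barrier. In short, your plan honestly maps the difficulty and matches the paper's own (non-)reduction, but it closes none of the gaps; the statement remains an open conjecture.
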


Even more so, we believe that the answer to Problem~\ref{pro:tetra} is
``Yes'' and that this actually extends to higher dimensions.

\begin{conjecture}\label{conj:superlinear}
  For each $d\geq3$ and every point set $S \in \mathbb{R}^d$ there
  exists a triangulation with super linear many $d$-simplices.
\end{conjecture}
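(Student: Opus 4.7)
The plan is to construct $S$ by adding, for each $p_i \in X$, a small perturbation $q_i = p_i + \varepsilon v_i$ with $v_i$ a carefully chosen unit vector and $\varepsilon > 0$ taken small. Each unordered pair $\{i,j\}$ then contributes a candidate $3$-simplex $\sigma_{i,j} = \Conv(\{p_i, q_i, p_j, q_j\})$, of which there are $\binom{n}{2}$. The work is to choose the $v_i$ and $\varepsilon$ so that (i) all $\sigma_{i,j}$ are non-degenerate $3$-simplices, (ii) the set $S = X \cup \{q_1, \ldots, q_n\}$ is in general position, and (iii) the $\sigma_{i,j}$ have pairwise disjoint interiors.

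First I would pick unit vectors $v_1, \ldots, v_n \in S^2$ from the complement of a measure-zero bad set, requiring: no two are parallel; none is parallel to any segment $p_a p_b$; and for all triples $i,j,k$, $v_j$ avoids the plane $\mathrm{span}(v_i, p_j - p_i)$, which is exactly the condition that makes $\{p_i, q_i, p_j, q_j\}$ affinely independent. With these $v_i$ fixed, $\varepsilon$ is chosen sufficiently small at the end, small enough also that $S$ inherits general position from $X$.

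The heart of the argument is pairwise interior-disjointness. If $\{i,j\} \cap \{r,s\} = \emptyset$, each $\sigma$ lies in an $O(\varepsilon)$-tube around the segment connecting its two $p$-vertices, and the segments $[p_i,p_j]$ and $[p_r,p_s]$ are compact and disjoint by general position of $X$, so small $\varepsilon$ separates them. If the two pairs share exactly one index $k$, the two tetrahedra share the entire edge $[p_k, q_k]$, and I must analyze the local geometry at $p_k$. Near $p_k$, $\sigma_{k,\ell}$ is a trihedral cone whose edges leave $p_k$ in the directions $v_k$, $p_\ell - p_k$, and $p_\ell - p_k + \varepsilon v_\ell$, the last two being nearly parallel; so the cone is an $O(\varepsilon)$-thin wedge around the plane $\mathrm{span}(v_k, p_\ell - p_k)$. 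For two different neighbors $\ell = j$ and $\ell = s$, the genericity of $v_k$ guarantees that these two planes meet exactly in the line through $p_k$ spanned by $v_k$, namely in the shared edge itself; hence the interior wedges converge to disjoint sets as $\varepsilon \to 0$, and a straightforward compactness argument upgrades this to genuine disjointness for all $\varepsilon$ below some threshold.

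The main obstacle is this second case: two tetrahedra sharing an edge are only separated by a second-order-in-$\varepsilon$ effect, so the argument requires that the genericity list on the $v_i$ is strong enough to put the two interior wedges on genuinely different sides of the shared edge, uniformly over the finitely many sharing pairs. Once the genericity is recorded and the compactness step is done, taking $\varepsilon$ smaller than the minimum threshold across all $O(n^2)$ pairwise checks yields $S$ of cardinality $2n$ in general position and $\binom{n}{2}$ pairwise interior-disjoint $3$-simplices, as claimed.
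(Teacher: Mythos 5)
Your proposal does not address the statement in question. The statement is a \emph{conjecture} --- the paper does not prove it and explicitly leaves it open --- asserting that \emph{every} point set in $\mathbb{R}^d$, $d\geq 3$, admits a \emph{triangulation} with superlinearly many $d$-simplices. What you have written is essentially a proof of Theorem~\ref{thm:quadratic}, a much weaker statement: that any $X$ of $n$ points in $\mathbb{R}^3$ can be \emph{augmented} to a $2n$-point set $S$ determining $\binom{n}{2}$ interior-disjoint $3$-simplices. Your argument (perturbation vectors $v_i$, the tetrahedra $\sigma_{i,j}=\Conv(\{p_i,q_i,p_j,q_j\})$, the $O(\varepsilon)$-tube and shared-edge wedge analysis) matches the paper's proof of that theorem, with more detail on the disjointness verification.

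The gaps relative to the conjecture are threefold. First, the quantifier is wrong: the conjecture demands the bound for \emph{every} point set, whereas your construction only produces \emph{some} point sets (those of the doubled form $X\cup\{q_1,\dots,q_n\}$) with many interior-disjoint simplices; an arbitrary $S$ need not have this structure. Second, a family of interior-disjoint simplices is not a triangulation: a triangulation must be a simplicial complex with vertex set $S$ whose union is $\Conv(S)$, and the paper explicitly notes that ``in many instances it will not be possible to complete the set of $3$-simplices obtained in the proof above to a full triangulation of $S$.'' Closing that gap --- extending a quadratic-size interior-disjoint family to a genuine triangulation without collapsing the count --- is precisely the hard, open part. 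Third, the conjecture covers all $d\geq3$, while your argument is carried out in $\mathbb{R}^3$ (though, as the paper remarks, the construction itself generalizes). In short, you have reproved the paper's closing observation, not the conjecture; no proof of the conjecture exists in the paper to compare against.
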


Of course, proving this stronger Conjecture~\ref{conj:superlinear}
would imply a proof for Conjecture~\ref{conj:manycolor}: Construct a
triangulation of super linear size on the biggest color class
$R\subseteq S$. There exist only linear many differently colored
points in \mbox{$S\!\setminus\! R$} to fill the super linear many
monochromatic $d$-simplices on $R$. Hence, there exists at least one
monochromatic empty $d$-simplex.

\section*{Acknowledgements}

We want to thank David Flores-Pe\~naloza for valuable comments.

Research of Oswin Aichholzer supported by the ESF EUROCORES programme
EuroGIGA -- CRP `ComPoSe', Austrian Science Fund (FWF): I648-N18.
Research of Ruy Fabila-Monroy partially supported by CONACyT (Mexico)
grant 153984.
Research of Thomas Hackl supported by the Austrian Science Fund
(FWF): P23629-N18 `Combinatorial Problems on Geometric Graphs'.
Research of Clemens Huemer partially supported by projects
MTM2009-07242, Gen. Cat. DGR 2009SGR1040, and ESF EUROCORES programme
EuroGIGA -- CRP `ComPoSe', MICINN Project EUI-EURC-2011-4306.
Research of Jorge Urrutia partially supported by CONACyT (Mexico)
grant CB-2007/80268.

\bibliographystyle{plain}
\bibliography{afhhu-ems}

\end{document}